\setlist{nosep} 
\let\OLDthebibliography\thebibliography
\renewcommand\thebibliography[1]{
  \OLDthebibliography{#1}
  \setlength{\parskip}{0pt}
  \setlength{\itemsep}{0pt plus 0.3ex}
}
\numberwithin{equation}{section}
\newtheorem{theorem}{Theorem}[section]
\newtheorem{lemma}[theorem]{Lemma}
\newtheorem{proposition}[theorem]{Proposition}
\newtheorem{definition}[theorem]{Definition}
\theoremstyle{remark}
\newtheorem{remark}[theorem]{Remark}
\tikzset{->-/.style={decoration={
			markings,
			mark=at position #1 with {\arrow{>}}},postaction={decorate}}}
\newcommand{\N}{\mathbb{N}}
\newcommand{\R}{\mathbb{R}}
\newcommand{\C}{\mathbb{C}}
\newcommand{\G}{\mathcal{G}}
\newcommand{\norm}[3]{\|#1\|_{L^{#2}(#3)}}
\DeclareMathOperator{\sech}{sech}
\title{Stability transitions of NLS {action} ground-states on metric graphs}
\author{Francisco Agostinho, Sim\~ao Correia and
	Hugo Tavares}
\date{\today} 
\begin{document}
\maketitle

\begin{abstract}
We study the orbital stability of action ground-states of the nonlinear Schr\"odinger equation over two particular cases of metric graphs, the $\mathcal{T}$ and the tadpole graphs. We show the existence of \emph{stability transitions} near the $L^2$-critical exponent, a new dynamical feature of the nonlinear Schr\"odinger equation. More precisely, as the frequency $\lambda$ increases, the action ground-state transitions from stable to unstable and then back to stable (or vice-versa).

This result is complemented with the stability analysis of ground-states in the asymptotic cases of low/high frequency and weak/strong nonlinear interaction. Finally, we present a numerical simulation of the stability of action ground-states depending on the nonlinearity and the frequency parameter, which validates the aforementioned theoretical results.

\vskip10pt
	\noindent\textbf{Keywords}: action ground-states, metric graphs, nonlinear Schr\"odinger equation, orbital stability, stability transitions.
	\vskip10pt
	\noindent\textbf{AMS Subject Classification 2020}:  	34C37, {35B35}, 35Q55, 35R02,  {37K45}, 70K05 
\end{abstract}

\section{Introduction}

Let $\mathcal{G}$ be a (finite) metric graph\footnote{Roughly speaking, a set of finitely many edges, identified either with half-lines or finite intervals,  joined through vertices, where the distance between two points is defined as the length of the shortest path joining them. For more details, see \cite{berkolaiko2013introduction}.}. We consider the nonlinear Schr\"odinger equation over\footnote{The equation is to be satisfied over each edge of $\mathcal{G}$, together with a continuity condition on the vertices.} $\mathcal{G}$,
\begin{equation}\label{NLSE_Chapter5}\tag{NLS}
	i\partial_t v+ \partial^2_{xx} v+|v|^{p-2}v=0, \quad v=v(t,x),\quad v:\R\times \mathcal{G}\to \C,\quad p>2. \end{equation}
In physical applications, the metric graphs can represent either an optic network \cite{bolte2014many, burioni2001bose, dalfovo1999theory} (where $v$ is then the signal transmitted along that network) or a ramified trap for a Bose-Einstein condensate \cite{bulgakov2011symmetry, hung2011symmetric} (in which case, $v$ is the wave function of the condensate). 

Among all solutions, we focus on standing waves
\[
v(t,x)=e^{i\lambda t}u(x),
\]
where the real parameter $\lambda$ is usually called \textit{frequency} and $u$ defines the profile of the standing wave. It can be easily checked that $v$ is a standing wave if and only if the profile $u$ is a solution of
\begin{equation}\label{StationaryNLS_Chapter5}
	-u'' + \lambda u = |u|^{p-2}u\ \text{in}\ \G.
\end{equation}
This equation is to be interpreted in the weak-$H^1(\mathcal G)$-sense; its solutions corresponds to critical points of the action functional 
\begin{equation}\label{eq:actionfunctional}
	S_\lambda(u,\mathcal{G})=\frac{1}{2}\int_{\mathcal{G}}|{u'}|^2+\lambda |u|^2dx-\frac{1}{p}\int_{\mathcal{G}}|u|^pdx,\ \text{for}\ u\in H^1(\mathcal{G}),
\end{equation} 
 where $H^1(\mathcal{G})$ is the set of (complex-valued) functions $u$ which are $H^1$ on each edge and continuous at each vertex. A standard computation shows that weak solutions satisfy the equation in the classical sense on each edge, and satisfy Neumann-Kirchoff conditions at each vertex, see \cite{berkolaiko2013introduction} for more details.
 
 A nontrivial solution of \eqref{StationaryNLS_Chapter5} is called a \textit{bound-state}. As bound-states generate orbits of \eqref{NLSE_Chapter5}, it is natural to analyze their dynamical stability.

\begin{definition}[\textit{Orbital Stability}]
	Let $\G$ be a metric graph. The \textit{orbit} of a solution $u\in H^1(\mathcal{G})$ of \eqref{StationaryNLS_Chapter5} is the set\footnote{if $\mathcal{G}=\R$, we must include the translation invariance in the definition of orbit, $\mathcal{O}(u):=\{e^{i\lambda\theta}u(\cdot+y):\ \theta\in\R,\ y\in\R\}$, see \cite[Section 8.3]{cazenave2003semilinear}.} $\mathcal{O}(u):=\{e^{i\lambda\theta}u:\ \theta\in\R\}$. We say that a bound-state $u$ is \textit{orbitally stable} if, for every $\varepsilon>0$, there exists $\delta>0$ such that
	\[ d\left(\Phi(0),\mathcal{O}(u^\lambda)\right)<\delta\ \ \implies d\left(\Phi(t),\mathcal{O}(u^\lambda)\right)<\varepsilon,\ \forall t>0,
    \]
	where $\Phi(t)$ is the solution of \eqref{NLSE_Chapter5} with initial data $\Phi(0)$, and $d(\cdot,\cdot)$ is the $H^1$-distance.
	
    If $u$ is not orbitally stable, we say that it is (orbitally) unstable.
\end{definition}

The stability of bound-states  is generally a hard open problem. For this reason, one usually restricts to the class of \emph{action ground-state solutions} of \eqref{StationaryNLS_Chapter5}, i.e., bound-states that attain the 
 least action level
\begin{equation}\label{eq:leastactionlevel} \mathcal{S}_{\mathcal{G}}(\lambda)=\inf\left\{S_\lambda(u,\mathcal{G}),\ u\in H^1(\mathcal{G})\setminus\left\{0\right\}: u \text{ is a bound-state of } \eqref{StationaryNLS_Chapter5}\right\}.
\end{equation}
{The study of such solutions (whose existence and variational characterization might depend on the graph) has been the object of recent work, see \cite{AgostinhoCorreiaTavares,AgostinhoCorreiaTavares2,de2023notion}.}\footnote{{As a parallel comment, we would also like to point out the intense activity on the study of \emph{energy} ground-states, see for e.g. \cite{ Simonegrid2018,adami2015nls, adami2016threshold, 
adami2017negative, 
Borthwick2023,
cacciapuoti2018variational,  chang2023normalized,
deCoster_etal2024,SimoneTree2020,   dovetta2024nonuniquenessnormalizedgroundstates, dovetta2020uniqueness, MR3959930} and references therein. On graphs, in general, these two notions of ground-state are not equivalent, see \cite{AgostinhoCorreiaTavares, dovetta2024nonuniquenessnormalizedgroundstates, dovetta2023action}.}}
For metric graphs with relatively simple topologies, it can be shown that, given $\lambda>0$, there exists a unique action ground-state $u^\lambda$ up to phase multiplication.
In this work, we focus mainly on two such classes (see \cite{AgostinhoCorreiaTavares, AgostinhoCorreiaTavares2} and Theorem \ref{thm:uniqueness} below): \begin{itemize}
    \item the $\mathcal{T}-$graph, consisting of two half-lines, $h_1$ and $h_2$, and a terminal edge of length $\ell>0$, all attached at the same vertex, which we identify with $x=0$, see Figure \ref{fig:T}. To emphasize the dependence on $\ell>0$, throughout this paper we will denote this graph by $\mathcal{T}_\ell$;
    \item the tadpole graph, consisting of one half-line $h_1$ and one loop of length $2\ell>0$, see Figure \ref{fig:tadpole}. We denote such graph by $\mathcal{G}_\ell$.
\end{itemize}
\begin{figure}[h]
	\centering
	\begin{minipage}{0.45\textwidth}
		\centering\begin{tikzpicture}[node distance=2.5cm,  every loop/.style={}]
			\node(a)		{\Large $\infty$};
			\node[circle, fill=black, label=above:\large $\mathbf{0}$](b)[right of=a]{};
			\node[circle, fill=black, label=left: \large $\boldsymbol{\ell}$](c)[below of=b]{};
			\node(d)[right of=b]{\Large $\infty$};
			\path (a)edge node[above]{$h_1$}(b)
			(b)edge node[left]{$e_1$} (c)
			edge node[above]{$h_2$} (d)
			;	   	
		\end{tikzpicture}\label{fig:T}
		\caption{The class of $\mathcal{T}$-graphs.}\label{fig:T}	
	\end{minipage}
	\begin{minipage}{0.45\textwidth}
		\centering
		\begin{tikzpicture}[node distance=3cm,every loop/.style={}]
			\node(1){\Large $\infty$};
			\node[circle, fill=black, label=above:\large $\mathbf{0}$](3)[right of=1]{};
			\path
			(1) edge node[above]{$h_1$} (3)
			(3) edge[out=-30,in=30,loop,scale=6] node[left]{$e_1$} (3)
			;
		\end{tikzpicture}
		\vspace{7pt}
		\caption{The class of tadpole graphs.}\label{fig:tadpole}
	\end{minipage}
\end{figure}

In the euclidean cases $\mathcal{G}=\R$ or $\mathcal{G}=\R^+$, orbital stability of ground-states is completely characterized \cite{berestycki1981instability}, \cite{cazenave1982orbital}: regardless of the time-frequency parameter $\lambda>0$, for $p<6$, the ground-state is orbitally stable, and, for $p> 6$, it is unstable. Due to the scaling invariance of the spatial domain, the $L^2$-critical case $p=6$ marks a clear transition between both behaviors. These results have been extended to star-graphs, as scalings still preserve the domain \cite{Adami_star2012}. 
The stability of ground-states in other metric graphs has received some attention in the last ten years. See, for example, \cite{gustafson} on the circle; \cite{cacciapuoti2018variational} on compact graphs; \cite{kairzhan2021standing} on flower graphs; \cite{angulo2024stability} on looping edge graphs; \cite{noja2015bifurcations,  noja2020standing, pava2024stability}  on the tadpole graph; and references therein. Albeit being important steps towards a comprehensive theory, several of these works impose restrictions either in the power $p$ or in the frequency $\lambda$ and are unable to provide clear pictures of the orbital stability for action ground-states.

\medskip

For general metric graphs $\mathcal{G}$, the loss of scaling invariance raises the question of what is the critical threshold for stability/instability in terms of both $\lambda$ and $p$. In particular, it opens the possibility for \emph{stability transitions}: for fixed $p$, as the frequency increases, the nature of the action ground-state $u^\lambda$ varies from unstable to stable and back to unstable (or vice-versa).
\begin{definition}[Stability transition]
    Given a metric graph $\mathcal{G}$ and $p>2$, suppose that, for each $\lambda>0$, there exists a unique ground-state $u^\lambda$. We say that an USU-stability transition occurs if there exist frequencies $0<\lambda_1<\lambda_2<\dots<\lambda_6$ for which
		\begin{itemize}
			\item $u^\lambda$ is orbitally unstable for $\lambda\in[\lambda_1,\lambda_2]$.
			\item $u^\lambda$ is orbitally stable for $\lambda\in[\lambda_3,\lambda_4]$.
			\item $u^\lambda$ is orbitally unstable for $\lambda\in[\lambda_5,\lambda_6]$.
		\end{itemize}
    An analogous definition holds for SUS-stability transitions.
\end{definition}

Stability transitions represent  highly nonlinear phenomena and open some interesting applications, where the topology of the spatial domain can be used to influence the range of observable frequencies $\lambda$ in the related physical model. Up until now, the existence of stability transitions was completely unknown, even in the case of bounded euclidean domains (such as balls or rectangles).
Our first main result is the occurrence of stability transitions for the $\mathcal{T}$ and tadpole graphs near the $L^2$-critical case.

\begin{theorem}\label{thm:transition_particular}
Given $p>2$ and $\lambda,\ell>0$, let $u^\lambda$ be the unique positive action ground-state solution of \eqref{StationaryNLS_Chapter5} on either the $\mathcal{T}$ or tadpole graphs. Then, there exists $\varepsilon>0$ such that
	\begin{enumerate}
    \item  for $p\in(6,6+\varepsilon)$, there exists a USU-stability transition.
		\item for $p\in(6-\varepsilon,6)$, there exists a SUS-stability transition.
	
\item    For $p=6$, up to a single frequency $\lambda^*=\lambda^*(\mathcal{G})$,
        \begin{enumerate}
            \item for the $\mathcal{T}$-graph, there exists $\bar \lambda>0$ such that $u^\lambda$ is orbitally unstable for $\lambda\in (0,\bar\lambda)$ and orbitally stable for $\lambda\in (\bar\lambda,\infty)$.
            \item for the tadpole graph, there exists $\tilde \lambda>0$ such that $u^\lambda$ is orbitally stable for $\lambda\in (0,\tilde \lambda)$ and orbitally unstable for $\lambda\in (\tilde \lambda,\infty)$.
        \end{enumerate}
        \end{enumerate} 
\end{theorem}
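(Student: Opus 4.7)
The plan is to invoke the Grillakis--Shatah--Strauss framework: under the uniqueness and positivity of the action ground-state established in the authors' prior works on these graphs, the spectral assumptions reduce the question to the Vakhitov--Kolokolov slope condition --- $u^\lambda$ is orbitally stable iff $M'(\lambda)>0$ and unstable iff $M'(\lambda)<0$, where $M(\lambda):=\|u^\lambda\|_{L^2(\G)}^2$. The natural rescaling $u^\lambda(x)=\lambda^{1/(p-2)}w^L(\sqrt{\lambda}\,x)$, with $L:=\sqrt{\lambda}\,\ell$, identifies $w^L$ with the unit-frequency positive ground-state on the rescaled graph ($\mathcal{T}_L$ or $\mathcal{G}_L$), producing
\[
M(\lambda)=\lambda^{\alpha(p)}\,m_p(L),\qquad \alpha(p):=\tfrac{6-p}{2(p-2)},\qquad m_p(L):=\|w^L\|_{L^2}^2,
\]
so that $\sgn M'(\lambda)=\sgn f_p(L)$ with $f_p(L):=\alpha(p)\,m_p(L)+\tfrac{L}{2}m_p'(L)$. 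The entire theorem is thus translated into sign-change information for $f_p$ on $(0,\infty)$.

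For part~(3) ($p=6$) the prefactor $\alpha$ vanishes and $f_6(L)=\tfrac{L}{2}m_6'(L)$, so the two assertions reduce to: $m_6$ admits a unique critical point on each graph, a minimum for $\mathcal{T}$ and a maximum for the tadpole. I would prove this by exploiting the explicit arc-of-soliton description of the positive ground-state available in the prior works: on each edge $w^L$ is a piece of the canonical positive soliton $\phi$ of $-\phi''+\phi=\phi^5$ (explicitly $\phi(x)=3^{1/4}\sech^{1/2}(2x)$), shifted and reflected so that continuity and Neumann--Kirchhoff hold at the internal vertex, Neumann at the terminal endpoint (on $\mathcal{T}$) and symmetry closure across the loop (on the tadpole). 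These matching relations reduce both $L$ and $m_6(L)$ to smooth functions of a single parameter --- conveniently, the vertex value $u(0)$ --- so that the sign of $m_6'(L)$ becomes an explicit inequality in $u(0)$, amenable to direct analysis. The limits $L\to 0^+$ and $L\to+\infty$, given by the known masses on $\R$, $\R^+$ and the $3$-star, furnish the boundary data that localize the critical point.

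For parts~(1) and~(2), with $p$ close to $6$, $f_p$ is a smooth perturbation of $f_6$ by the small term $\alpha(p)\,m_p(L)$ of fixed sign. On the $\mathcal{T}$-graph, with $m_6$ having a unique minimum at $\bar L$, $f_6$ is strictly negative on $(0,\bar L)$ and strictly positive on $(\bar L,\infty)$. For $p>6$ one has $\alpha(p)<0$ and, since $\tfrac{L}{2}m_6'\to 0$ at both endpoints, the perturbation forces $f_p(0^+)<0$ and $f_p(+\infty)<0$, while $f_p$ remains strictly positive on a compact sub-interval of $(\bar L,\infty)$ where $\tfrac{L}{2}m_6'$ dominates $|\alpha(p)|\,m_p$; this yields at least two sign changes and hence a USU-transition. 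For $p<6$ the sign of $\alpha(p)$ reverses, all signs flip and an SUS-transition appears. The tadpole case is handled symmetrically, using the unique maximum of $m_6$ and interchanging the roles of the two sides of the critical point.

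The main obstacle is the unique critical point claim underlying part~(3): although the implicit gluing parametrization makes both $L$ and $m_6(L)$ explicit functions of $u(0)$, the resulting expression for $m_6'(L)$ is a ratio of $\sech$-type terms whose sign requires identifying a hidden monotonicity or convexity structure in the parameter. A secondary, more routine, issue is checking that the perturbation analysis in the nearly-critical case is uniform on compact subsets of $(0,\infty)$, which follows from the analytic dependence of the gluing relations on $p$ and from the simple-zero structure of $m_6'$.
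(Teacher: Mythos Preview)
Your overall strategy coincides with the paper's: verify the Grillakis--Shatah--Strauss hypotheses, reduce to the Vakhitov--Kolokolov slope, use the rescaling $M(\lambda)=\lambda^{\alpha(p)}m_p(L)$, settle $p=6$ by showing $m_6$ has a unique critical point (minimum for $\mathcal{T}$, maximum for the tadpole) via the explicit gluing parametrization in the phase-plane variable $z=u(0)$, and then perturb in $p$. Your identification of the ``hidden monotonicity'' in $m_6'$ as the core difficulty is accurate; in the paper this becomes the analysis of $F(z)-G(z)$ in Lemmas~\ref{lemma:5.35}--\ref{lemma:5.36}.

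The substantive difference lies in how you obtain the \emph{third} sign region in parts (1)--(2). You assert that $\tfrac{L}{2}m_6'(L)\to0$ at both endpoints $L\to0^+,\infty$, so that for $p\ne6$ the term $\alpha(p)m_p$ dominates there. This claim is correct but not immediate: mere convergence of $m_p(L)$ to a finite limit does not force $L\,m_p'(L)\to0$. The paper in fact proves the requisite endpoint asymptotics (Lemmas~\ref{assymptotic_L1}--\ref{assymptotic_mu_der}), but only for Theorem~\ref{thm:asymptotics}, not for Theorem~\ref{thm:transition_particular}. For the latter it uses a shorter route: the uniform two-sided $H^1$ bound $(H)$ on the rescaled ground state gives directly $\Theta(p,\lambda)\asymp\lambda^{\alpha(p)}$ (Lemma~\ref{5.49.Temp}), so for $p>6$ one has $\Theta(p,\lambda)\to0$ as $\lambda\to\infty$, forcing $\partial_\lambda\Theta<0$ somewhere beyond the stable interval; similarly $\Theta(p,\lambda)\to0$ as $\lambda\to0^+$ for $p<6$. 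This avoids any endpoint analysis of $m_p'$ and any uniformity-in-$p$ issue at $L=0,\infty$, which your sketch leaves unaddressed (you only claim uniformity on compact $L$-sets). Your argument can be completed, but the paper's use of $(H)$ is the cleaner device here.
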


\begin{remark}
We point out that part 3(b) of the above theorem follows from the conclusions of \cite{pelinovsky2021edge}. We decided to include a precise statement here for completeness.
\end{remark}

\medskip
\noindent
The proof of Theorem \ref{thm:transition} can be split into three steps:

\medskip
\begin{enumerate}
\item First, we check that the classical Grillakis-Shatah-Strauss \cite{grillakis1987stability,grillakis1990stability} conditions are verified in both classes of graphs. In particular, we show that, for each $\lambda>0$, $u^\lambda$ is a non-degenerate critical point of the action functional, with Morse index 1 . This allows to reduce of the study of the orbital stability of action ground-states to the monotonicity of the map 
\begin{equation}\label{eq:mass_function}
\lambda \mapsto\left\|u^\lambda\right\|_{L^2}
\end{equation}
through the \textit{Vakhitov--Kolokolov Stability Criterion}, see Theorem \ref{th:5.27} for more details.
\item Secondly, we provide a complete study of the map \eqref{eq:mass_function} in the $L^2$-critical case $p=6$, showing in particular it is not monotone, determining the monotonicity intervals and performing an asymptotic analysis as $\lambda\to 0^+$ and $\lambda\to +\infty$.
\item Finally, we show the continuity in the parameter $p>2$ of the quantity $\partial_\lambda \|u^\lambda\|_{L^2}^2$. 
\end{enumerate}

\smallbreak

This is actually a program on how to prove stability transitions close to $p\sim 6$. More precisely, under some basic assumptions on the ground-state curve $\lambda\mapsto u^\lambda$, we can prove the existence of stability transitions for general metric graphs (or even spatial domains), see Theorem \ref{thm:transition} below.

\begin{remark}
    The problematic frequency $\lambda^*$ in Theorem \ref{thm:transition_particular}-3 arises in the process of verifying the spectral conditions of the Grillakis-Shatah-Strauss theory (see Section \ref{AssumptionA_3} and, in particular, Proposition \ref{prop:5.26} for its explicit expression). We believe it is only a technical difficulty and should be overcome upon further analysis.
\end{remark}

\smallbreak
Theorem \ref{thm:transition_particular} shows stability transitions for $p\sim 6$. It is a natural question to ask what happens for $p\sim 2^+$ or $p\sim \infty$. This is the content of our second main result, which partially answers Conjecture 1.6 in \cite{AgostinhoCorreiaTavares}. The result also deals with the asymptotic cases $\lambda\sim 0$ and $\lambda\sim \infty$.

\begin{theorem}\label{thm:asymptotics}
    Consider either the $\mathcal{T}$-graph or the tadpole graph.
    Fix $\delta>0$.

\medskip
    \noindent\underline{Asymptotic stability in $\lambda$}
    
    \smallskip
    \begin{enumerate}
    \item if $p\in [2+\delta,6-\delta]$, action ground-states are orbitally stable for both $\lambda\sim 0$ and $\lambda\sim \infty$, uniformly in $p$.
    \item if $p\in[6+\delta, 1/\delta]$, action ground-states are orbitally unstable for both $\lambda\sim 0$ and $\lambda\sim \infty$, uniformly in $p$.
    \end{enumerate}

    \medskip
   \noindent\underline{Asymptotic stability in $p$}

     \smallskip
    \begin{enumerate}
    \item[(a)] for $p\sim 2^+$, action ground-states are orbitally stable, uniformly in $\lambda\in[\delta,1/\delta]$.
    \item[(b)] for $p\sim \infty$, action ground-states are orbitally unstable, uniformly in $\lambda\in[\delta,1/\delta]$.
    \end{enumerate}
\end{theorem}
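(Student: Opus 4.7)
The plan is to combine the Vakhitov--Kolokolov / Grillakis--Shatah--Strauss reduction already invoked in step (1) of the proof program for Theorem \ref{thm:transition_particular} with a rescaling of the ground-state equation. Having reduced orbital stability of $u^\lambda$ to the monotonicity of $N(\lambda):=\|u^\lambda\|_{L^2(\mathcal{G})}^2$, we write $u^\lambda(x)=\lambda^{\frac{1}{p-2}}v^L(\sqrt{\lambda}\, x)$, where $L:=\ell\sqrt{\lambda}$ and $v^L$ is the unique positive ground-state of $-v''+v=|v|^{p-2}v$ on the rescaled graph ($\mathcal{T}_L$ or $\mathcal{G}_L$). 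Setting $\alpha:=\tfrac{6-p}{2(p-2)}$ and $M_p(L):=\|v^L\|_{L^2}^2$, a direct scaling computation gives $N(\lambda)=\lambda^{\alpha}M_p(L)$, hence
\[
\partial_\lambda N(\lambda)=\lambda^{\alpha-1}\bigl[\alpha\, M_p(L)+\tfrac{L}{2}M_p'(L)\bigr]=:\lambda^{\alpha-1}F_p(L).
\]
Since $\lambda^{\alpha-1}>0$, the entire theorem reduces to controlling the sign of the scalar quantity $F_p(L)$ in the appropriate asymptotic regime.

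For the asymptotic stability in $\lambda$, the regimes $\lambda\to 0^+$ and $\lambda\to+\infty$ correspond, respectively, to $L\to 0^+$ and $L\to+\infty$. In both graph families the degeneration is controlled: the compact edge collapses, so $\mathcal{T}_L\to\R$ and $\mathcal{G}_L\to\R^+$ as $L\to 0^+$; the compact edge effectively becomes unbounded, so both graphs converge to a three--half-line star as $L\to+\infty$. On each limiting graph the positive ground-state of $-v''+v=|v|^{p-2}v$ is explicit, and continuous dependence of $v^L$ on the graph parameter (together with the uniqueness provided by Theorem \ref{thm:uniqueness}) yields finite positive limits $M_p(0^+)$ and $M_p(+\infty)$, with convergence uniform in $p$ on compact subsets of $(2,\infty)\setminus\{6\}$. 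The universal exponential decay (at rate $1$) of $v^L$ on each unbounded edge then gives $LM_p'(L)\to 0$ at both endpoints. Consequently $F_p(L)\to \alpha M_p(0^+)$ and $\alpha M_p(+\infty)$, both with the sign of $6-p$, which proves parts (1) and (2).

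For the asymptotic stability in $p$, $\lambda\in[\delta,1/\delta]$ is compact and so is the associated range of $L$. When $p\to 2^+$, one has $\alpha\sim 2/(p-2)\to+\infty$; standard Nehari and elliptic regularity estimates provide a uniform positive lower bound on $M_p(L)$ and a uniform upper bound on $LM_p'(L)$ on this compact $L$-set, whence $F_p(L)\to+\infty$, yielding (a). When $p\to+\infty$, $\alpha\to-\tfrac{1}{2}$ and the sign of $F_p$ is determined by $LM_p'(L)-M_p(L)+o(1)$. The plan is to exploit the half-line structure of both graphs: on each unbounded edge $v^L$ is a translate of the $\R$-soliton $\phi_p(x)=(p/2)^{1/(p-2)}\sech^{2/(p-2)}(\tfrac{p-2}{2}x)$, whose $L^2(\R)$-mass converges to $1$ as $p\to\infty$; together with the fact that the compact edge contributes only a $C^1$-bounded correction in $L$, this identifies an explicit limit $M_\infty(L)$ for which $LM_\infty'(L)<M_\infty(L)$ uniformly on compact $L$-sets (modelled on the trivial case $\mathcal{G}=\R$, where $M_\infty\equiv 1$ and $M_\infty'\equiv 0$), yielding (b). The main obstacle is precisely this last step: since $\alpha$ stays finite as $p\to+\infty$, one cannot rely on a divergent prefactor to absorb errors and must instead produce a sharp asymptotic description of $v^L$ (and hence of $M_p$ and its derivative) that is uniform on compact $L$-sets.
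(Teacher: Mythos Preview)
The reduction to the sign of $F_p(L)=\alpha M_p(L)+\tfrac{L}{2}M_p'(L)$ is exactly formula \eqref{der_Theta_z}--\eqref{der_Theta_z2} in the paper, so your scaffolding agrees with the paper's. The problems are in the justifications you give for the asymptotics.

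For parts (1)--(2), your description of the $L\to+\infty$ limit is incorrect: neither graph degenerates to a $3$-star. Since the junction value $z=v^L(0)$ tends to $0$ as $L\to\infty$, the half-lines carry negligible mass and the ground state concentrates on the \emph{compact} edge, looking like a half-soliton on $\R^+$ (for the $\mathcal{T}$-graph, with mass $\tfrac12\|\varphi\|_{L^2}^2$) or a full soliton on $\R$ (tadpole, mass $\|\varphi\|_{L^2}^2$); see Lemma~\ref{prop6.1}. This does not spoil the conclusion that $M_p$ has finite positive limits, but the geometric picture is wrong. More seriously, the assertion ``exponential decay on the unbounded edges gives $LM_p'(L)\to 0$'' is a non sequitur: decay of $v^L$ in $x$ says nothing about regularity in the \emph{structural} parameter $L$. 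The paper obtains $LM_p'(L)\to 0$ through explicit phase-plane integral representations, parametrizing ground states by the junction value $z$ and computing sharp two-sided asymptotics for $L(p,z)$, $\partial_z L$, $\Theta_1$, $\partial_z\Theta_1$ separately as $z\to 0^+$ and $z\to\varphi(0)^-$ (Lemmas~\ref{assymptotic_L1}--\ref{assymptotic_mu_der}). A soft graph-degeneration argument would also need to control $M_p'$, not just $M_p$.

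For part~(b) you correctly flag the obstacle, but you have not closed it, and the heuristic you offer is not enough. Saying the situation is ``modelled on $\mathcal{G}=\R$, where $M_\infty\equiv 1$'' only gives the trivial $F_\infty=-\tfrac12<0$ on the line; it provides no mechanism to carry a \emph{strict} sign over to the actual graphs, where $M_\infty$ genuinely depends on $L$. In the paper the limit is computed explicitly via $f(t)\sim t^2/2$ (Lemma~\ref{lem:pinfty}), yielding $F_\infty\propto \sigma\,G(\sigma)^2\,\tfrac{d}{d\sigma}\bigl(\tfrac{\sqrt{1+\sigma}}{G(\sigma)}-\sigma\bigr)$ for an explicit logarithmic function $G$, and the sign is then verified by a hands-on elementary computation for each graph separately. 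Without an analogous explicit calculation your part~(b) remains a plan, not a proof. A smaller point: for (a), ``standard Nehari and elliptic regularity estimates'' do not obviously survive the degeneration $p\to 2^+$, where the nonlinearity disappears and the ground-state mass diverges; the paper again works in the phase-plane variable, showing $z(p,\lambda)\to\sqrt{e}$ and that $\alpha\Theta_1\sim(p-2)^{-3/2}$ dominates the bounded remainder (Lemmas~\ref{lem:lim_z_p2}--\ref{lem:lim_p2}).
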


The conclusions of Theorem \ref{thm:asymptotics} for fixed $p$ can be heuristically explained as follows. Observe that, for $\lambda=\ell^2>0$, the scaling $u(x)=\ell^\frac{2}{p-2}v(\ell x)=\lambda^\frac{1}{p-2}v(\sqrt{\lambda}x)$ satisfies
\[
-v''+v=|v|^{p-2}v \text{ in } \mathcal{T}_\ell \iff -u''+\lambda u=|u|^{p-2}u \text{ in } \mathcal{T}_1,
\]
and the same goes for the tadpole graph $\mathcal{G}_\ell$. Therefore, we can fix one of the parameters and vary only the other one; in particular, the statements in our two main results may be stated in terms of $\ell$ (and some of the proofs will actually be performed in such setting). The asymptotic behavior as $\lambda\to \infty$ and $\lambda\sim 0$ in Theorem \ref{thm:asymptotics} becomes quite natural when read in terms of $\ell$: for $\ell\sim 0,\ \infty$, the situation is close to having either a half-line or the whole line, and thus it is natural that the results are similar to those of these classical cases. 

\begin{remark}
    Theorem \ref{thm:transition_particular} deals with $p\sim 6$, while Theorem \ref{thm:asymptotics} analyzes the limiting cases in the $(p,\lambda)$ plane. The reason why we are unable to present the full stability characterization in $(p,\lambda)$ is merely a technical one. As we shall see in Section \ref{sec:explicit_mass}, we are able to derive a closed formula for the Vakhitov-Kolokolov condition and the sole difficulty is the \emph{explicit} evaluation of such an expression. Nevertheless, this already allows for a numerical verification of the Vakhitov-Kolokolov condition for each $p$ and $\lambda$, resulting in Figure \ref{fig:enter-label}.

\begin{figure}[h]
    \centering
    \includegraphics[height=6cm]{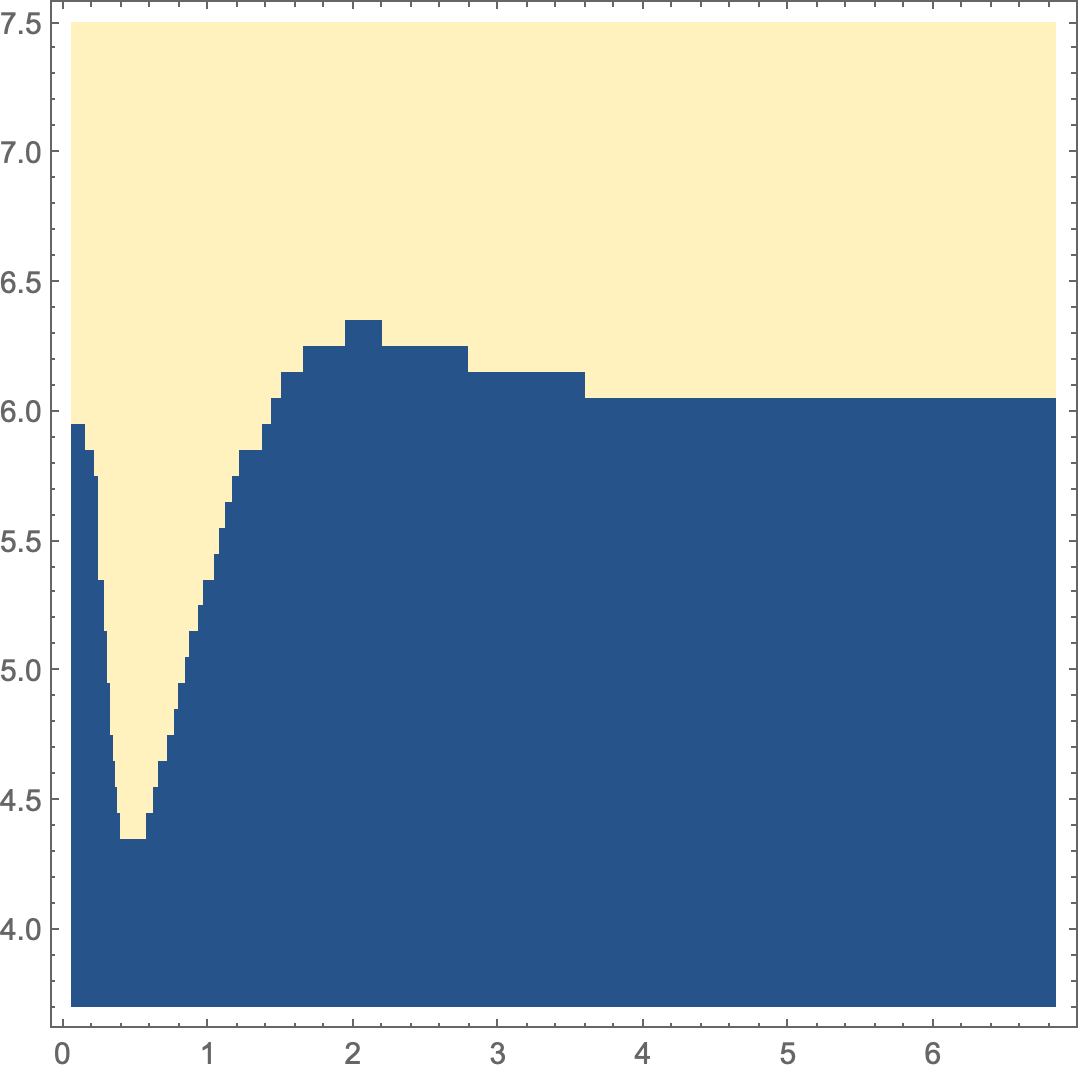}\quad\quad \includegraphics[height=6cm]{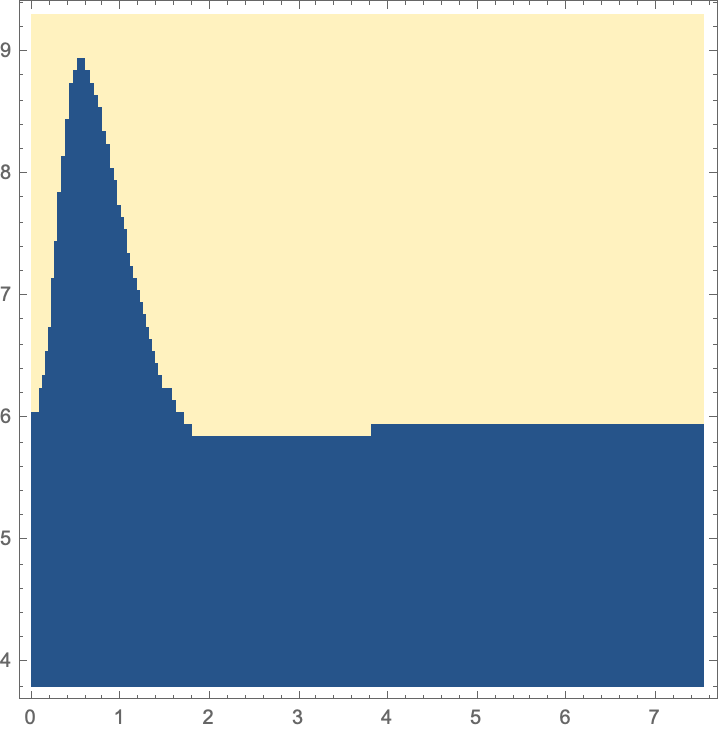}
    \caption{Numerical simulation of the stable (blue) and unstable (yellow) regions in the $(\lambda,p)$ plane, for the  $\mathcal{T}$-graph (left) and the tadpole graph (right).}
    \label{fig:enter-label}
\end{figure}

    Even though our theoretical results validate the above numerical computations, it would be interesting to derive Figure \ref{fig:enter-label} analytically.
\end{remark}

\begin{remark}
 In this work, we have constrained ourselves to the $\mathcal{T}$ and tadpole graphs. We conjecture that our results can be adapted to more general classes of metric graphs, such as single-knot metric graphs \cite{AgostinhoCorreiaTavares2}, which consist of arbitrary numbers of half-lines,  pendants and loops, all attached to the same vertex. In the regular case (where pendants have length $\ell$ and loops have length $2\ell$), action ground-states can be analyzed as in the  $\mathcal{T}$ or tadpole graphs (see \cite[Section 2]{AgostinhoCorreiaTavares2}). The main obstacles to extend our results to such graphs are the nondegeneracy condition imposed by the Grillakis-Shatah-Strauss theory (see Section \ref{AssumptionA_3}) {and the non-monotonicity of the map \eqref{eq:mass_function} for $p=6$ (see Section \ref{sec:NonMonotoneTheta})}. We believe that this is simply a technical difficulty and that stability transitions hold for regular single-knot (or even generic) metric graphs. {For this reason, whenever possible, we keep the discussion as general as possible (see also Remark \ref{rem:theta} below).}
\end{remark}

\bigskip
{
The paper is structured as follows. Section \ref{sec:GSS} concerns the application of the well-known Grillakis-Shatah-Stauss theory in the context of (NLS). In particular, we prove a general criterion for the existence of stability transitions, see Theorem \ref{thm:transition}.  Section \ref{sec:Preliminaries} includes an overview of known results about action ground-states, mostly taken from \cite{AgostinhoCorreiaTavares,AgostinhoCorreiaTavares2}, including explicit expressions for the mass and the length of the compact edge in terms of the phase-plane variables.

After these preliminary sections, in Section \ref{sec:GSS_particular}, we verify the assumptions of the Grillakis-Shatah-Strauss theory on the $\mathcal{T}$ and the tadpole graphs. Section \ref{sec:NonMonotoneTheta} is then dedicated to the proof of the lack of monotonicity (in $\lambda$) of the mass of action ground-states for $p=6$, which allows to apply the aforementioned general criterion and deduce stability transitions close to $p=6$. This  concludes the proof of Theorem \ref{thm:transition_particular}.

The final Section \ref{sec:asymptotics} is dedicated to the proof of Theorem \ref{thm:asymptotics}, which concerns the asymptotic cases $\lambda\sim 0,\infty$ (Section \ref{sec:asymptotics_lambda}) and $p\sim 2^+,\infty$ (Section \ref{sec:asymptotics_p}).
}

\section{Grillakis-Shatah-Strauss theory}\label{sec:GSS}

As mentioned in the introduction, the study of orbital stability of action ground-states of \eqref{NLSE_Chapter5} fits within the framework of the Grillakis-Shatah-Strauss stability theory. This theory, albeit sufficiently general to analyze a very large class of equations, stands on three main assumptions which may be, in some situations, hard to verify. In this section, we introduce some notation in order to connect with the results in \cite{grillakis1987stability}, and we conclude with a general criterion that ensures stability transitions.

 For now, we keep the discussion as general as possible, so we work with $\mathcal{G}$ a general metric graph with finitely many edges. We consider $X=H^1(\mathcal{G};\C)$ to be the space of complex valued functions which are $H^1$ in each edge and continuous over the graph $\mathcal{G}$. We identify $X\simeq H^1(\mathcal{G}; \R)\times H^1(\mathcal{G};\R)$ and we endow $X$ with the usual real inner product. Moreover, we denote by $X'$ the dual space of $X$ and use the notation $\langle\cdot,\cdot\rangle$ for the duality pairing between $X'$ and $X$.

Recall from \eqref{eq:actionfunctional} the $C^2$-action functional $S_\lambda$. It can be decomposed into two functionals of class that are formally preserved under the flow of \eqref{NLSE_Chapter5}:
\begin{equation*}
	S_\lambda(u,\mathcal{G})=E(u,\mathcal{G})+\frac{\lambda}{2} Q(u,\mathcal{G}),
\end{equation*}
where $E,Q:H^1(\mathcal{G};\C)\to\R$ are given by
\begin{equation*}
	E(u,\mathcal{G})=\frac{1}{2}\int_{\mathcal{G}}|u'|^2dx-\frac{1}{p}\int_{\mathcal{G}}|u|^pdx\quad\text{(\textit{Energy})}\quad\text{and}\quad Q(u,\mathcal{G})=\int_{\mathcal{G}}|u|^2dx\quad\text{(\textit{Mass})}.
\end{equation*}

With this notation, equation \eqref{NLSE_Chapter5} can be written as an abstract Hamiltonian system of the form 
$$\partial_t v = JE'(v(t),\mathcal{H}_1),$$
where $J:X'\to X'$ is the closed skew-symmetric linear operator which is represented by
$$ J=\begin{pmatrix}
	0 & 1\\-1 & 0
\end{pmatrix}.$$
For more details, we refer the reader to \cite[Section 6]{grillakis1987stability}. In this abstract setting, there are three assumptions that need to be verified in order to apply the Grillakis-Shatah-Strauss stability theory. In our setting, they can be defined as follows: 

\begin{enumerate}
	\item[$(A_1)$](Well Posedeness) Given $v_0\in X$, there exists a maximal time of existence $T_0>0$ and a unique solution $v\in C([0,T_0),X)$ to \eqref{NLSE_Chapter5} such that $v(0)=v_0$,
	$E\left(v(t),\mathcal{G}\right)=E\left(v_0,\mathcal{G}\right)$ and $Q\left(v(t),\mathcal{G}\right)=Q\left(v_0,\mathcal{G}\right)$, for all $t\in[0,T_0)$.
	\item[$(A_2)$](Existence and regularity of a bound-state curve) There exist $0<\lambda_1<\lambda_2$ and a $C^1$ map $\lambda\mapsto \Phi_\lambda$ from the open interval $(\lambda_1,\lambda_2)$ into $X$ that satisfies:
	\begin{equation}\label{CriticalPointAction}
		\langle S_\lambda'(\Phi_\lambda,\mathcal{G}),v\rangle=\langle E'(\Phi_\lambda,\mathcal{G})+\frac{\lambda}{2} Q'(\Phi_\lambda,\mathcal{G}),v\rangle=0,\ \text{for all}\ v\in H^1(\mathcal{G};\C).
	\end{equation}
	
	\item[$(A_3)$](Spectral Conditions) Define the linearized Hamiltonian around the bound-state $\Phi_\lambda\in X$ by $H_\lambda: X\to X'$, which satisfies
	\begin{equation}\label{LinearizedOperator}
		\langle H_\lambda v,w\rangle:=\langle S_\lambda''(\Phi_\lambda,\mathcal{G})v,w\rangle.
	\end{equation}
	Assume that $H_\lambda$
	satisfies the following spectral properties: for each $\lambda\in(\lambda_1,\lambda_2)$, $H_\lambda$ has exactly one negative simple eigenvalue, its kernel is spanned by $i\Phi_\lambda$, and the rest of its spectrum is positive and bounded away from zero.
\end{enumerate}

\smallbreak

The Grillakis-Shatah-Strauss stability theory tells us that, under these assumptions, we can discuss the stability of the bound-state $\Phi_\lambda$ for each $\lambda>0$, in terms of the convexity of the function $d:(0,\infty)\to\R$ defined by 
$$ 
d(\lambda)=E(\Phi_\lambda,\mathcal{G})+\frac{\lambda}{2} Q(\Phi_\lambda,\mathcal{G})=S_\lambda(\Phi_\lambda,\mathcal{G}).
$$
This reduces, see \cite[Theorem 3]{grillakis1987stability}, to analyzing the sign of $d''(\lambda)$. Observe that if $\Phi_\lambda$ is a solution of the equation \eqref{StationaryNLS_Chapter5}, we have that
$$ 
d''(\lambda)=\frac{1}{2}\frac{d }{d\lambda}Q(\Phi_\lambda,\mathcal{G}).
$$
Indeed, by definition of $d$, and assumption $(A_2)$, we have
\begin{align*}
	d'(\lambda)&=\left\langle E'(\Phi_\lambda,\mathcal{G}),\frac{d}{d\lambda}\Phi_\lambda \right\rangle+\frac{1}{2}Q(\Phi_\lambda,\mathcal{G})+\frac{\lambda}{2}\left\langle Q'(\Phi_\lambda,\mathcal{G}),\frac{d}{d\lambda}\Phi_\lambda\right\rangle=\frac{1}{2}Q(\Phi_\lambda,\mathcal{G}).
\end{align*}

Then, stability can be determined as follows:
\begin{theorem}[\textit{Vakhitov--Kolokolov Stability Criterion \cite{grillakis1987stability}}]\label{th:5.27}
	Fix $p>2$. Suppose assumptions $(A_1)$, $(A_2)$ and $(A_3)$ hold. Then,
	\begin{enumerate}
		\item If $\frac{d Q}{d \lambda}(\Phi_\lambda,\mathcal{G})>0$, the bound-state $\Phi_\lambda$ is orbitaly \textit{stable};
		\item If $\frac{d Q}{d \lambda}(\Phi_\lambda,\mathcal{G})<0$, the bound-state $\Phi_\lambda$ is orbitaly \textit{unstable}.
	\end{enumerate}
\end{theorem}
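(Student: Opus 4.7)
The plan is to apply the classical Grillakis-Shatah-Strauss theorem (\cite[Theorem 3]{grillakis1987stability}) combined with the identity
\[
d''(\lambda) = \tfrac{1}{2}\tfrac{d}{d\lambda}Q(\Phi_\lambda,\mathcal{G}),
\]
already derived in the excerpt from assumption $(A_2)$. Indeed, \cite[Theorem 3]{grillakis1987stability} asserts that, under $(A_1)$--$(A_3)$, the sign of $d''(\lambda)$ alone determines orbital stability: strict convexity yields stability and strict concavity yields instability. Substituting the identity above translates this into the sign of $\frac{dQ}{d\lambda}(\Phi_\lambda,\mathcal{G})$, exactly as claimed. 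So no new ingredient beyond $(A_1)$--$(A_3)$ and the $C^1$ character of $\lambda\mapsto\Phi_\lambda$ is needed.

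For completeness, let me outline the argument underlying the Grillakis-Shatah-Strauss reduction. Using modulation, I would write $u$ in a small neighborhood of the orbit $\mathcal{O}(\Phi_\lambda)$ as $u=e^{i\theta}(\Phi_\lambda+v)$ with $v\perp i\Phi_\lambda$ in $X$, and Taylor-expand
\[
S_\lambda(u,\mathcal{G})-S_\lambda(\Phi_\lambda,\mathcal{G})=\tfrac{1}{2}\langle H_\lambda v,v\rangle+o(\|v\|^2).
\]
Differentiating \eqref{CriticalPointAction} in $\lambda$ gives $H_\lambda\partial_\lambda\Phi_\lambda=-\Phi_\lambda$, which yields $\langle H_\lambda\partial_\lambda\Phi_\lambda,\partial_\lambda\Phi_\lambda\rangle=-d''(\lambda)$. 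For $d''(\lambda)>0$, this places a direction of negative $H_\lambda$-quadratic form (necessarily the unique negative eigendirection of $H_\lambda$, by $(A_3)$) transversally to the constraint $Q(u,\mathcal{G})=Q(\Phi_\lambda,\mathcal{G})$; conservation of mass (by $(A_1)$) then removes that negative direction, and the remaining spectral gap in $(A_3)$ yields coercivity $\langle H_\lambda v,v\rangle\gtrsim\|v\|^2$ on the resulting codimension-two subspace. Orbital stability then follows from a standard continuity-in-time argument using the conserved quantities $E$ and $Q$.

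The instability case $d''(\lambda)<0$ is more delicate: now the negative direction of $H_\lambda$ is tangent to the constraint manifold $\{Q=Q(\Phi_\lambda,\mathcal{G})\}$, so the conserved action no longer controls the distance to the orbit. Instead, one constructs a Lyapunov-type functional—typically of the form $A(u)=\langle iu,\partial_\lambda\Phi_\lambda\rangle$—whose derivative along the flow admits a strictly positive lower bound as long as $u$ remains close to the orbit, thereby forcing exit in finite time. The main obstacle to invoking this theorem in our setting is not the reduction sketched above, but the verification of assumption $(A_3)$—non-degeneracy, Morse index one, and spectral gap of $H_\lambda$—on the specific graphs $\mathcal{T}_\ell$ and $\mathcal{G}_\ell$; this is what will be carried out in Section \ref{sec:GSS_particular}.
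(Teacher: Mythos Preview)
Your proposal is correct and matches the paper's treatment exactly: the paper does not prove this theorem independently but simply cites \cite[Theorem 3]{grillakis1987stability} and combines it with the identity $d''(\lambda)=\tfrac{1}{2}\tfrac{d}{d\lambda}Q(\Phi_\lambda,\mathcal{G})$ derived in the preceding paragraph. Your additional outline of the GSS mechanism (modulation, coercivity on the codimension-two subspace, Lyapunov functional for instability) goes beyond what the paper includes, but is accurate and consistent with the cited reference.
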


\subsection{A general criterion for stability transitions}

In order to present a general criterion for stability transitions close to $p=6$, we need to assume some upper and lower estimates of the size of action ground-states. The validity of this condition is usually ensured by using the variational characterization of action ground-states.
\begin{itemize}
\item[$(H)$] Let $\Phi_\lambda$ be as in $(A_2)$, satisfied for $\lambda\in (0,\infty)$, and consider, for $\lambda=\ell^2>0$, the scaling $$\Phi_\lambda(x)=\ell^\frac{2}{p-2}\Phi_\ell(\ell x)=\lambda^\frac{1}{p-2}\Phi_\ell(\sqrt{\lambda}x),\quad x\in \mathcal{G_\lambda}.$$  Then there exists $C_p>0$ such that
\begin{equation}\label{lower_upper_bounds}
\frac{1}{C_p}\leq \|\Phi_\ell\|_{H^1(\mathcal{G}_\lambda)}\leq C_p\qquad \text{ for all } \ell>0.
\end{equation}
\end{itemize}
\begin{lemma}\label{5.49.Temp}
	Let $\G$ be a non-compact metric graph and assume $(A_2)$ and $(H)$ for some $p>2$. Then there exists $C=C_p>0$ such that 
	\begin{equation}\label{eqn.lema3.14}
		\frac{1}{C}\lambda^{\frac{6-p}{2(p-2)}}\leq Q(\Phi_\lambda,\mathcal{G})\leq C\lambda^{\frac{6-p}{2(p-2)}}.
	\end{equation}
\end{lemma}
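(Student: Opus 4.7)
The plan is to exploit the scaling from hypothesis $(H)$ to reduce everything to uniform (in $\ell$) bounds on $\|\Phi_\ell\|_{L^2(\mathcal{G}_\lambda)}$, and then to derive these bounds from $(H)$ together with the fact that $\Phi_\ell$ solves the stationary equation on the scaled graph.

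Concretely, I would first substitute the scaling relation
$$
\Phi_\lambda(x)=\lambda^{\frac{1}{p-2}}\Phi_\ell(\sqrt{\lambda}\,x),\qquad \lambda=\ell^2,
$$
into $Q(\Phi_\lambda,\mathcal{G})=\int_{\mathcal{G}}|\Phi_\lambda|^2\,dx$ and change variables $y=\sqrt{\lambda}\,x$. A short arithmetic shows
$$
Q(\Phi_\lambda,\mathcal{G})=\lambda^{\frac{2}{p-2}-\frac{1}{2}}\,\|\Phi_\ell\|_{L^2(\mathcal{G}_\lambda)}^2=\lambda^{\frac{6-p}{2(p-2)}}\,\|\Phi_\ell\|_{L^2(\mathcal{G}_\lambda)}^2,
$$
so the problem reduces to proving the two-sided bound $\frac{1}{C}\le \|\Phi_\ell\|_{L^2(\mathcal{G}_\lambda)}^2\le C$, uniformly in $\ell>0$, for some $C=C_p$.

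The upper bound is immediate from $(H)$: $\|\Phi_\ell\|_{L^2(\mathcal{G}_\lambda)}\le \|\Phi_\ell\|_{H^1(\mathcal{G}_\lambda)}\le C_p$. For the lower bound, I would test the rescaled equation $-\Phi_\ell''+\Phi_\ell=|\Phi_\ell|^{p-2}\Phi_\ell$ on $\mathcal{G}_\lambda$ against $\Phi_\ell$ itself, which yields the Nehari-type identity
$$
\|\Phi_\ell\|_{H^1(\mathcal{G}_\lambda)}^2=\|\Phi_\ell\|_{L^p(\mathcal{G}_\lambda)}^p.
$$
Combining with the lower bound in $(H)$ gives $\|\Phi_\ell\|_{L^p(\mathcal{G}_\lambda)}^p\ge 1/C_p^2$. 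Then I would use the elementary $L^\infty$-Sobolev estimate valid on any metric graph with at least one half-line, namely
$$
\|u\|_{L^\infty(\mathcal{G}_\lambda)}^2\le 2\,\|u\|_{L^2(\mathcal{G}_\lambda)}\|u'\|_{L^2(\mathcal{G}_\lambda)}\le \|u\|_{H^1(\mathcal{G}_\lambda)}^2,
$$
(which follows from $|u(x)|^2=-\int_x^\infty (|u|^2)'\,ds$ along a half-line path from $x$ to infinity, and whose constant is genuinely independent of the graph), together with the interpolation $\|\Phi_\ell\|_{L^p}^p\le \|\Phi_\ell\|_{L^\infty}^{p-2}\|\Phi_\ell\|_{L^2}^2$, to conclude that
$$
\|\Phi_\ell\|_{L^2(\mathcal{G}_\lambda)}^2\ge \frac{\|\Phi_\ell\|_{L^p}^p}{\|\Phi_\ell\|_{L^\infty}^{p-2}}\ge \frac{1}{C_p^2\,(C_p)^{p-2}}.
$$

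The key point where the argument is not purely formal is the lower bound, and the delicate step there is ensuring that the $H^1\hookrightarrow L^\infty$ constant is truly independent of $\ell$. The scaling alters the lengths of the compact parts of $\mathcal{G}$ (in particular, for $\ell\to 0$ the compact edges collapse), but since $\mathcal{G}$ is non-compact by assumption, every point of $\mathcal{G}_\lambda$ remains connected to infinity along a half-line of unbounded length, and the fundamental-theorem-of-calculus estimate along that path gives a constant that does not depend on the geometry of the compact part. This is precisely why the hypothesis that $\mathcal{G}$ be non-compact is used. Once both bounds are in hand, plugging them into the scaling identity yields \eqref{eqn.lema3.14}.
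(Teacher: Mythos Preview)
Your proof is correct and follows essentially the same route as the paper: reduce via the scaling to uniform two-sided $L^2$ bounds on $\Phi_\ell$, get the upper bound trivially from $(H)$, and get the lower bound from the Nehari identity $\|\Phi_\ell\|_{H^1}^2=\|\Phi_\ell\|_{L^p}^p$ combined with an interpolation inequality whose constant is independent of the scaled graph. The only cosmetic difference is that the paper invokes the Gagliardo--Nirenberg inequality $\|u\|_{L^p}^p\le C\|u\|_{L^2}^{(p+2)/2}\|u'\|_{L^2}^{(p-2)/2}$ directly, whereas you derive the very same inequality by chaining $\|u\|_{L^p}^p\le\|u\|_{L^\infty}^{p-2}\|u\|_{L^2}^2$ with $\|u\|_{L^\infty}^2\le 2\|u\|_{L^2}\|u'\|_{L^2}$; your explicit discussion of why the $L^\infty$ constant is graph-independent (via integration to infinity along a half-line) is a welcome clarification of a point the paper leaves implicit.
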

\begin{proof}
From the scaling and assumption $(H)$, 
	\begin{equation*}\label{eqn.lema3.14.1}
		Q(\Phi_\lambda,\mathcal{G})\lambda^{\frac{p-6}{2(p-2)}}=\|\Phi_\ell\|^2_{L^2(\G_\lambda)}\leq \|\Phi_\ell\|^2_{H^1(\G_\lambda)} \leq C_p^2
	\end{equation*}
	Recalling the Gagliardo-Nirenberg inequality
	\[
\|\Phi_\ell\|_{H^1(\mathcal{G}_\lambda)}^2=\|\Phi_\ell\|_{L^p(\mathcal{G}_\lambda)}^p\leq C \|\Phi_\ell\|^\frac{p+2}{2}_{L^2(\mathcal{G}_\lambda)}\|\Phi_\ell'\|^\frac{p-2}{2}_{L^2(\mathcal{G}_\lambda)}\leq \kappa C \|\Phi_\ell\|^\frac{p+2}{2}_{L^2(\mathcal{G}_\lambda)},
	\]
	the remaining conclusion follows from the lower bounds in $(H)$. 
\end{proof}

\begin{theorem}\label{thm:transition} 
    	Suppose that $\G$ is a metric graph. For $p\sim 6$, assume conditions $(A_1)$, $(A_2)$ and $(A_3)$ are satisfied for $\lambda\in (0,\infty)$, and that $(H)$ holds. For $\Theta(p,\lambda):=Q(\Phi_\lambda,\mathcal{G})$, suppose that the map $\lambda\in (0,\infty) \mapsto\Theta(6,\Phi_\lambda)$ is not monotone and that $(p,\lambda)\mapsto\frac{\partial\Theta}{\partial\lambda}(p,\lambda)$ is continuous. Then, there exists $\varepsilon>0$ such that
	\begin{enumerate}
		\item for $p\in(6-\varepsilon,6)$, there exists a SUS-stability transition.
		\item  for $p\in(6,6+\varepsilon)$, there exists a USU-stability transition.
	\end{enumerate} 
\end{theorem}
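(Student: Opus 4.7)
The plan is to invoke the Vakhitov--Kolokolov criterion (Theorem \ref{th:5.27}), reducing the statement to producing, for $p$ on a one-sided neighborhood of $6$, three non-empty closed intervals of $\lambda$ on which $\partial_\lambda\Theta(p,\cdot)$ has alternating strict signs. The required third sign region, beyond the two forced by non-monotonicity at $p=6$, will come from the asymptotic bounds on $\Theta$ provided by Lemma \ref{5.49.Temp}, whose scaling exponent changes sign as $p$ crosses $6$ and thus produces opposite behaviors on the two sides.

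First, since $\Theta(6,\cdot)$ is non-monotone on $(0,\infty)$, the Mean Value Theorem yields $0<\lambda_A<\lambda_B<\infty$ such that $\partial_\lambda\Theta(6,\lambda_A)$ and $\partial_\lambda\Theta(6,\lambda_B)$ are nonzero and of opposite sign. The hypothesized joint continuity of $(p,\lambda)\mapsto\partial_\lambda\Theta(p,\lambda)$ then gives $\varepsilon_1>0$ such that, for all $|p-6|<\varepsilon_1$,
\[
\sgn\partial_\lambda\Theta(p,\lambda_A)=\sgn\partial_\lambda\Theta(6,\lambda_A),\qquad \sgn\partial_\lambda\Theta(p,\lambda_B)=\sgn\partial_\lambda\Theta(6,\lambda_B).
\]

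Next, set $\alpha(p):=(6-p)/(2(p-2))$. By Lemma \ref{5.49.Temp}, $\Theta(p,\lambda)$ is comparable to $\lambda^{\alpha(p)}$ uniformly in $\lambda>0$. For $p>6$ one has $\alpha(p)<0$, so $\Theta(p,\lambda)\to\infty$ as $\lambda\to 0^+$ and $\Theta(p,\lambda)\to 0$ as $\lambda\to\infty$; for $p<6$ the limits are reversed. Fixing any $p\neq 6$ and applying the Mean Value Theorem to $\Theta(p,\cdot)$ on $[\lambda_*,\lambda_A]$ with $\lambda_*$ small, and on $[\lambda_B,\lambda^*]$ with $\lambda^*$ large, produces $\lambda^-\in(0,\lambda_A)$ and $\lambda^+\in(\lambda_B,\infty)$ with $\partial_\lambda\Theta(p,\lambda^\pm)<0$ when $p>6$ and $\partial_\lambda\Theta(p,\lambda^\pm)>0$ when $p<6$.

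For $p\in(6,6+\varepsilon_1)$, the four ordered values $\lambda^-<\lambda_A<\lambda_B<\lambda^+$ thus display a sign pattern of the form $(-,s_A,s_B,-)$ with $\{s_A,s_B\}=\{+,-\}$; in either case exactly one ``$+$'' is sandwiched between two ``$-$''s, and passing to closed $\lambda$-neighborhoods of these four points (by continuity of $\partial_\lambda\Theta(p,\cdot)$ in $\lambda$) produces six ordered frequencies witnessing a USU-transition. The symmetric argument for $p\in(6-\varepsilon_1,6)$ yields the pattern $(+,s_A,s_B,+)$, hence SUS. The one subtlety is ensuring $\lambda^\pm$ lies outside $[\lambda_A,\lambda_B]$: because $\lambda_A,\lambda_B$ are fixed first from the $p=6$ analysis, the lower and upper bounds in Lemma \ref{5.49.Temp} force $\Theta(p,\cdot)$ to exceed, respectively fall below, its values at $\lambda_A$ and $\lambda_B$ on arbitrarily small and arbitrarily large intervals for each individual $p\neq 6$, which is exactly what the MVT step needs.
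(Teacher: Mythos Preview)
Your argument is correct and follows essentially the same route as the paper: non-monotonicity at $p=6$ yields two frequencies with opposite-sign $\partial_\lambda\Theta$, joint continuity propagates these signs to $p\sim 6$, and Lemma \ref{5.49.Temp} (via the Mean Value Theorem) furnishes a third sign region from the asymptotics of $\Theta(p,\cdot)$ at $0$ or $\infty$; the Vakhitov--Kolokolov criterion then converts signs into (in)stability. The only cosmetic difference is that the paper fixes a WLOG ordering of the two signs at $p=6$ and then uses a single asymptotic limit (at $\lambda\to\infty$ for $p>6$, at $\lambda\to 0^+$ for $p<6$), declaring the other ordering ``analogous'', whereas you use \emph{both} asymptotic limits to get a four-point pattern $(-,s_A,s_B,-)$ or $(+,s_A,s_B,+)$, which handles either ordering of $s_A,s_B$ uniformly; this is a minor but pleasant streamlining. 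Your final paragraph about ``the one subtlety'' is unnecessary: since you apply the MVT on $[\lambda_*,\lambda_A]$ and $[\lambda_B,\lambda^*]$, the resulting $\lambda^\pm$ automatically lie outside $[\lambda_A,\lambda_B]$.
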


\begin{proof}[Proof of Theorem \ref{thm:transition}] 
    Under the assumptions of the statement, we can apply the Grillakis-Shatah-Strauss stability theory to the curve  $\lambda\in (0,\infty)\mapsto \Phi_\lambda$. The only thing left to do is to study the Vakhitov--Kolokolov criterion, recall Theorem \ref{th:5.27}. 
    
	Since $\lambda\mapsto\Theta(6,\lambda)$ is not monotone, assume, without loss of generality, there exist $0<\lambda_1<\lambda_2$ for which
	\[
    \frac{\partial\Theta}{\partial\lambda}(6,\lambda_1)<0\quad\text{and}\quad\frac{\partial\Theta}{\partial\lambda}(6,\lambda_2)>0
    \]
	(the situation where $\lambda_2<\lambda_1$ is analogous). Since, $(p,\lambda)\mapsto\frac{\partial\Theta}{\partial\lambda}(p,\lambda)$ is continuous, for $\varepsilon>0$ sufficiently small, 
	\begin{align*}
		&\frac{\partial\Theta}{\partial\lambda}(p,\lambda)<0\quad \text{for}\ p\in [6-\varepsilon,6+\varepsilon],\ \lambda\in [\lambda_1-\varepsilon,\lambda_1+\varepsilon],\\
        &\frac{\partial\Theta}{\partial\lambda}(p,\lambda)>0\quad \text{for}\ p\in [6-\varepsilon,6+\varepsilon],\ \lambda\in [\lambda_2-\varepsilon,\lambda_2+\varepsilon].
	\end{align*}
	Now, in the supercritical case $p\in (6,6+\varepsilon]$, since  $\Theta(p,\lambda)\to0^+$ as $\lambda\to+\infty$ (by Lemma \ref{5.49.Temp}),  there exist $\lambda_3,\lambda_4$ with $\lambda_2+\varepsilon<\lambda_3<\lambda_4$ such that 
	\begin{equation}\label{GSSAux.2}
		\frac{\partial\Theta}{\partial\lambda}(p,\lambda)<0\quad \text{for}\ \lambda\in(\lambda_3,\lambda_4),
	\end{equation}
	which concludes the proof of Item 2. In the subcritical case $p\in [6-\varepsilon,6)$,  we use once again Lemma \ref{5.49.Temp} to ensure the existence of $\lambda_5,\lambda_6$ with  $\lambda_5<\lambda_6<\lambda_1-\varepsilon$ for which
	\begin{equation}\label{GSSAux.2_Sub}
		\frac{\partial\Theta}{\partial\lambda}(p,\lambda)>0\quad \text{for}\ \lambda\in(\lambda_5,\lambda_6)
	\end{equation}
    and the proof is finished.
\end{proof}

\begin{remark}\label{rem:lambda_estrela}
    In the case of the $\mathcal{T}$ and tadpole graphs, the spectral condition $(A_3)$ will be verified for all $\lambda$ except for a problematic frequency $\lambda^*=\lambda^*(\mathcal{G})$ (see Section \ref{AssumptionA_3}). This poses no particular obstacle for the validity of the above result, as one can choose the various frequency intervals so that they do not include $\lambda^*$.
\end{remark}

\section{Review of known results for the $\mathcal{T}$ and tadpole graphs}\label{sec:Preliminaries}

In this section, we review known results on the qualitative and quantitative properties of action ground-states on the $\mathcal{T}$ and tadpole graphs. 
Unless we want to stress that a result or a proof holds only for a specific graph, we shall denote the graphs $\mathcal{T}_\ell$ and $\G_\ell$ by $\mathcal{H}_\ell$. 

From now on, $u^\lambda$ will denote the positive action ground-state solution on $\mathcal{H}_1$ (we omit here the dependence on $p$ of simplicity). Since we will study the mass of $u^\lambda$ as a function of $p$, we denote
\[
(p,\lambda)\mapsto \Theta(p,\lambda)=\|u^\lambda\|_{L^2(\mathcal{H}_1)}^2.
\]
A key element in our analysis is the function $f:\R^+_0\to \R$ defined by $f(t)=\frac{t^2}{2}-\frac{t^p}{p}$ (see Figure \ref{fig:f}). We write $f_1=f|_{[0,1]}$ and $f_2=f|_{[1,\infty)}$ and observe that both functions are invertible over the respective domains.

\begin{figure}[h]\label{fig:f}
\begin{center}
\includegraphics[height=3cm]{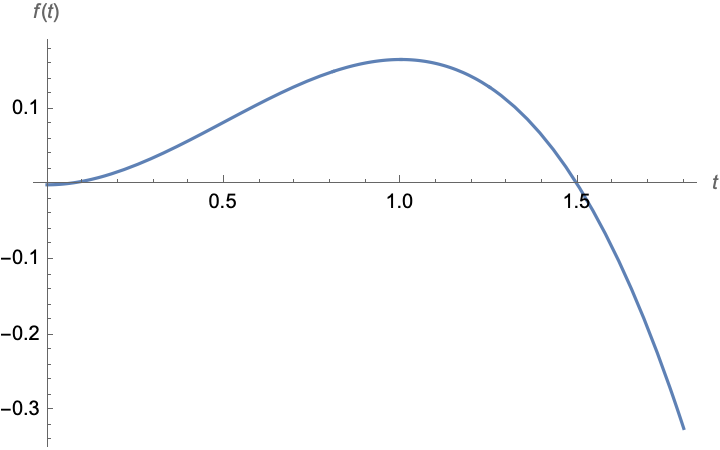}
\end{center}
\caption{Plot of the function $f(t)$ for $p=3$.}
\end{figure}

Moreover, we denote by $\varphi$ the soliton (for $\lambda=1$) centered at zero in the real line, i.e., the unique positive solution of 
\[
-\varphi''+\varphi=\varphi^{p} \text{ in } \R,
\] 
symmetric (and decreasing) with respect to the origin. A direct integration of the equation shows that
\[
\varphi(x)=\left(\frac{p}{2}\right)^\frac{1}{p-2}\sech^{\frac{2}{p-2}}\left(\frac{p-2}{2}x\right),\quad x\in\R,
\]
from which we observe that $\varphi(0)=(p/2)^\frac{1}{p-2}$.

\bigskip

We recall the following uniqueness result.

\begin{theorem}[{\cite[Theorem 1.3]{AgostinhoCorreiaTavares} $\&$ \cite[Proposition 1.17-1]{AgostinhoCorreiaTavares2}}]\label{thm:uniqueness}
For every $\lambda>0$, there exists a unique positive action ground-state solution $u^\lambda$ of \eqref{StationaryNLS_Chapter5}. Moreover:
\begin{enumerate}
\item In the $\mathcal{T}$-graph, $u^\lambda$ is symmetric and stricly decreasing in the half-lines, being increasing in the pendant, with a global maximum at the tip.
\item In the tadpole graph, $u^\lambda$ is strictly decreasing in the half-line; it is  symmetric in the loop with respect to the middle point $x=1$, being strictly increasing in $[0,1]$.
\end{enumerate}
\end{theorem}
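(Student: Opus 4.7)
The plan is to combine a variational existence step with a phase-plane ODE analysis edge by edge.

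For existence, I would obtain an action ground-state by a Nehari-manifold minimization on $\mathcal{H}_\ell$: minimizing sequences are bounded in $H^1$, and a concentration-compactness argument adapted to the graph (the compact edge traps mass near the central vertex, preventing its escape to infinity) yields a nontrivial minimizer. Replacing $u$ by $|u|$ preserves the action, and the strong maximum principle on each edge, together with the Kirchhoff condition at the vertex, upgrades nonnegativity to strict positivity.

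For the shape and symmetry, observe that on each edge any positive bound-state solves the autonomous ODE $-u''+\lambda u = u^{p-1}$, so the quantity $\tfrac12(u')^2-\tfrac{\lambda}{2}u^2+\tfrac{1}{p}u^p$ is conserved along each edge. On each half-line, the decay at infinity selects the homoclinic orbit, forcing $u$ to coincide with a translate of the soliton $\varphi$ rescaled by $\lambda$, hence strictly decreasing. On the pendant of $\mathcal{T}_\ell$, the Neumann condition $u'(\ell)=0$ pins $u$ to a monotone branch of its level set, and the Kirchhoff sign balance at the central vertex fixes the direction (increasing toward the tip). On the loop of $\mathcal{G}_\ell$, the closed-orbit condition $u(0)=u(2\ell)$ together with positivity selects a level set that crosses $\{u'=0\}$ exactly twice; by uniqueness of the ODE flow these crossings must be the vertex and its antipode, which yields simultaneously the reflection symmetry about $x=1$ and the strict monotonicity on $[0,1]$.

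For uniqueness, the previous step reduces a positive bound-state to very few scalar parameters, essentially $u(0)$ on the tadpole and the pair (vertex value, tip value) on the $\mathcal{T}$-graph. Using the phase-plane representation of the edge lengths as Abel-type integrals $\int \mathrm{d}t/\sqrt{f(\alpha)-f(t)}$, expressible via the inverses $f_1^{-1},f_2^{-1}$ introduced above, the Kirchhoff condition at the central vertex reduces to a single scalar equation $F(\alpha)=0$. Uniqueness then follows by showing that $F$ is strictly monotone on the admissible range of $\alpha$. The main obstacle is this monotonicity of $F$: the integrals have endpoints that move with $\alpha$, so monotonicity is not automatic and requires a careful differentiation under the integral sign, and one must simultaneously rule out positive bound-states whose pendant/loop component performs extra oscillations before closing up. This is precisely the content of the uniqueness arguments in \cite{AgostinhoCorreiaTavares,AgostinhoCorreiaTavares2}, whose statements are assembled in the theorem above.
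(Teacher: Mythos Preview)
This theorem is not proved in the present paper: it is quoted from \cite{AgostinhoCorreiaTavares,AgostinhoCorreiaTavares2} as part of the review of known results in Section~\ref{sec:Preliminaries}, so there is no in-paper proof to compare against. Your outline is a fair summary of the strategy in those references --- Nehari-type existence, edge-by-edge phase-plane analysis for the shape, and uniqueness via strict monotonicity of a length map (exactly the function $L$ of Theorem~\ref{thm:description_edge_by_edge} and Lemma~\ref{lemma:L'}) --- and your final sentence already acknowledges that you are deferring to them for the delicate monotonicity step.

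One point in your sketch is inaccurate as written. On the loop of $\mathcal{G}_\ell$, the solution does \emph{not} trace a closed periodic orbit in phase space, and the derivative does \emph{not} vanish at the vertex: the condition $u(0)=u(2\ell)$ together with the conserved quantity only gives $(u'(0))^2=(u'(2\ell))^2$, i.e.\ $u'(2\ell)=\pm u'(0)$. Selecting the minus sign (which is what yields $u(x)=u(2\ell-x)$ and the single interior critical point at the midpoint) is not forced by ``uniqueness of the ODE flow'' alone; it requires either a symmetrization/rearrangement argument specific to ground states or a case analysis combining the Kirchhoff condition at the vertex with the half-line soliton data. Similarly, ruling out extra oscillations on the pendant or loop --- i.e.\ showing that the ground state lies on the monotone branch rather than a higher-index one --- uses the minimality of the action, not just the phase portrait. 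These are exactly the places where \cite{AgostinhoCorreiaTavares,AgostinhoCorreiaTavares2} do the real work, so your deferral to them is appropriate, but the heuristic you gave for the loop should be corrected.
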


To state further properties, we work with the rescaled ground-state $u_\ell(x)=\ell^\frac{2}{p-2}u^{\lambda}(\ell x)=\lambda^\frac{1}{p-2}u^{\lambda}(\sqrt{\lambda}x)$, where $\ell=\sqrt{\lambda}$, which solves
\[
-u_\ell''+u_\ell=|u_\ell|^{p-1}u_\ell \text{ in } \mathcal{H}_\ell.
\]
For both graphs, we denote by $u_h=u_{h,\ell}$ the solution on the half-lines, which are identified with $[0,\infty)$, and $u_\kappa=u_{\kappa,\ell}$ the solution on the pendant (on $\mathcal{T}_\ell$) and on the half-loop (on $\mathcal{G}_\ell$), identified with the interval $[0,\ell]$).  We have the following result, whose proof can be found in \cite{AgostinhoCorreiaTavares,AgostinhoCorreiaTavares2} (more precisely, by combining the uniqueness statements written above with Lemma 2.1 of the second paper).
\begin{theorem}\label{thm:description_edge_by_edge}
Under the previous notation, there exists a \emph{unique} $y=y(p,\ell)>0$ such that
\begin{enumerate}
\item for each half-line $h$, $u_h(x)=\varphi(x+y)$;
\item defining $z=z(p,\ell):=\varphi(y)=\varphi(-y) \in\left(0,(p / 2)^{1 /(p-2)}\right)$, on the compact edge identified with $[0,\ell]$, $u_\kappa$ solves the overdetermined problem
\begin{equation}\label{eq:sol_finite_edge}
-u_\kappa''+u_\kappa=u_\kappa^{p-1} \text { in }[0, \ell], \quad u_\kappa(0)=z, \quad u_\kappa^{\prime}(0)=\theta \sqrt{2 f(z)}, \quad u_\kappa^{\prime}(\ell)=0,
\end{equation}
where $\theta=2$ in the $\mathcal{T}$-graph $\mathcal{T}_\ell$, and $\theta=1/2$ in the tadpole graph $\mathcal{G}_\ell$. In particular,
\begin{equation}\label{eq:hamilt}
    -\frac{1}{2}(u_\kappa')^2+f(u_\kappa)=(1-\theta^2)f(z)
\end{equation}
\end{enumerate}
Moreover,  $z(p,\ell)$ is uniquely determined by the identity
\[
 L(p,z(p,\ell))=\ell,
\]
where, for each $\theta>0$ and $p\in(2,\infty)$, $L(p,\cdot):\left(0,\left(\frac{p}{2}\right)^{\frac{1}{p-2}}\right) \rightarrow \mathbb{R}^{+}$ is the length function 
\begin{equation}\label{eq:Length}
L(p,z)=\frac{1}{\sqrt{2}} \int_z^{f_{2}^{-1}\left(\left(1-\theta^2\right) f(z)\right)} \frac{d t}{\sqrt{f(t)-\left(1-\theta^2\right) f(z)}}.
\end{equation}
Finally, assumption (H) holds, that is, there exists a constant $C_p>0$ for which
$$
\frac{1}{C_p}\leq \|u_\ell\|_{H^1(\mathcal{G})}\leq C_p.
$$
\end{theorem}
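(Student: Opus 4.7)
The plan is to translate the qualitative description of $u^\lambda$ furnished by Theorem \ref{thm:uniqueness} into an edge-by-edge phase-plane picture of the autonomous ODE $-u''+u=u^{p-1}$. On any half-line $h\simeq[0,\infty)$, the rescaled ground-state $u_h$ is positive, vanishes at infinity, and is strictly decreasing; by the standard phase portrait of this equation, the unique orbit with those properties is (a translate of) the soliton, so $u_h(x)=\varphi(x+y)$ for some $y>0$ (the sign being fixed by the decreasing branch). Setting $z:=\varphi(y)\in\bigl(0,(p/2)^{1/(p-2)}\bigr)$ and invoking the soliton conservation law $-\tfrac12(\varphi')^2+f(\varphi)\equiv 0$ (valid since $\varphi,\varphi'\to 0$ at infinity) gives $u_h(0)=z$ and $u_h'(0)=\varphi'(y)=-\sqrt{2f(z)}$.

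Next I would read off the data at the central vertex from Neumann-Kirchhoff, which requires the outgoing derivatives along all edges to sum to zero, together with continuity. On $\mathcal{T}_\ell$ the two half-lines and the pendant give $2\varphi'(y)+u_\kappa'(0)=0$, so $u_\kappa'(0)=2\sqrt{2f(z)}$, i.e.\ $\theta=2$; on $\mathcal{G}_\ell$, the left-right symmetry of $u^\lambda$ around the loop midpoint (Theorem \ref{thm:uniqueness}) makes both loop-sides contribute equally, so $\varphi'(y)+2u_\kappa'(0)=0$ and $\theta=1/2$. The condition $u_\kappa'(\ell)=0$ is the Neumann condition at the pendant tip in $\mathcal{T}_\ell$, and the symmetry condition at the loop midpoint in $\mathcal{G}_\ell$. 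The identity \eqref{eq:hamilt} is obtained by multiplying the compact-edge ODE by $u_\kappa'$ and integrating; the constant is computed at $x=0$ to give $(1-\theta^2)f(z)$. Using the monotonicity of $u_\kappa$ on $[0,\ell]$ to take the positive square-root, separation of variables and integration from $0$ to the turning point $f_2^{-1}((1-\theta^2)f(z))>1$ yields $L(p,z)=\ell$. Uniqueness of $z$ (and hence of $y=\varphi^{-1}(z)$) follows from the uniqueness of $u^\lambda$ in Theorem \ref{thm:uniqueness}; alternatively, one can check directly by phase-plane analysis that $z\mapsto L(p,z)$ is strictly monotone on its admissible range, as done in \cite{AgostinhoCorreiaTavares, AgostinhoCorreiaTavares2}.

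The main remaining point, and the most delicate one, is the bound (H). Testing the equation against $u_\ell$ and integrating by parts gives the Nehari identity $\|u_\ell\|_{H^1(\mathcal{H}_\ell)}^2=\|u_\ell\|_{L^p(\mathcal{H}_\ell)}^p$. Combined with Gagliardo-Nirenberg with a constant uniform in $\ell$ (obtained by an elementary reflection/extension of $\mathcal{H}_\ell$ inside a single half-line), this yields the lower bound $\|u_\ell\|_{H^1}\geq c_p>0$ uniformly in $\ell$. For the upper bound, the same Nehari identity rewrites $\mathcal{S}_{\mathcal{H}_\ell}(1)=\bigl(\tfrac12-\tfrac1p\bigr)\|u_\ell\|_{H^1}^2$, so it suffices to bound the least action level $\mathcal{S}_{\mathcal{H}_\ell}(1)$ from above uniformly in $\ell$; I would achieve this by testing the variational characterization of $\mathcal{S}_{\mathcal{H}_\ell}(1)$ against a fixed profile (e.g.\ a compactly supported truncation of $\varphi$ placed on one half-line), rescaled to lie on the Nehari manifold, which produces a bound independent of $\ell$.
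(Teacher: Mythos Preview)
Your proposal is correct and supplies exactly the argument that the paper defers to its references: the paper does not prove this theorem in-text but writes ``whose proof can be found in \cite{AgostinhoCorreiaTavares,AgostinhoCorreiaTavares2} (more precisely, by combining the uniqueness statements written above with Lemma 2.1 of the second paper)''. Your sketch---identifying the half-line restriction with a decreasing soliton tail via the phase portrait, reading off $\theta$ from Neumann--Kirchhoff and the symmetry of Theorem~\ref{thm:uniqueness}, deriving \eqref{eq:hamilt} by the first integral, obtaining $L(p,z)=\ell$ by separation of variables, and handling $(H)$ via the Nehari identity plus a uniform Gagliardo--Nirenberg inequality and a fixed competitor supported on one half-line---is precisely the route taken in those references, so there is nothing to contrast.
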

The length function was introduced in \cite[Definition 2.7]{AgostinhoCorreiaTavares2} (see also \cite[Definition 2.2]{AgostinhoCorreiaTavares} for the $\mathcal{T}$-graph case), corresponding to an explicit expression of  the length of the interval for increasing solutions of \eqref{eq:sol_finite_edge} written only in terms of the initial data. From \cite[Proposition 2.8]{AgostinhoCorreiaTavares2}, we have that
\begin{equation}\label{eq:asymptotic_L}
\lim_{z\to 0^+} L(p,z)=+\infty\quad \text{ and } \quad \lim_{z\to (p/2)^\frac{1}{p-2}} L(p,z)=0
\end{equation}
whenever $\theta>0$, being strictly decreasing for $\theta\in (0,2]$ (which includes the case of the $\mathcal{T}$ and tadpole graphs). 
{
\begin{remark} \label{rem:theta} Even though, for the purpose of this paper, only the cases $\theta=2,\frac{1}{2}$ matter, having in mind future extensions to other graphs, we keep in many statements an arbitrary $\theta>0$, whenever the result is true in such general setting. In the class of regular single-knot graphs introduced in \cite{AgostinhoCorreiaTavares2},  consisting of graphs with $H$ half-lines, $P$ pendants and $L$ loops, all attached to the same vertex, the solutions on the compact edges satisfy \eqref{eq:sol_finite_edge}, for an \emph{incidence index related to} $H$, $L$, $P$ and the monotonicity of the solution on the half-lines. For more details, we refer to \cite{AgostinhoCorreiaTavares2}, in particular to equation (1.8) therein.
\end{remark}
}

For the purpose of studying orbital stability (in particular in applying the {Vakhitov--Kolokolov stability criterion}, see the introduction or Theorem \ref{th:5.27} above), we also need to recall the following. 

\begin{lemma}  \label{lemma:L'}
	For $\theta>0$ and $z\in \left(0,(p/2)^{1/(p-2)}\right)$,
	\begin{align}
		\sqrt{2}L(p,z)\left((1-\theta^2)\right.&f(z)-\left.f(1)\right)=-2\theta\frac{(f(z)-f(1))\sqrt{f(z)}}{f'(z)} \nonumber \\
		&-\int_z^{f_2^{-1}\left((1-\theta^2)f(z)\right)}\left(3-2\frac{(f(t)-f(1))}{f'(t)^2}f''(t)\right)\sqrt{f(t)-(1-\theta^2)f(z)}dt\label{eqn.LDiffForm1}
	\end{align}
	and, taking derivatives in $z$,
	\begin{align}
&\sqrt{2}\frac{\partial L}{\partial z}(p,z)\left((1-\theta^2)f(z)-f(1)\right)=\theta\frac{f(1)}{\sqrt{f(z)}}+(1-\theta^2)f'(z)\int_z^{f_{2}^{-1}\left((1-\theta^2)f(z)\right)}\frac{A(t)dt}{\sqrt{f(t)-(1-\theta^2)f(z)}}\label{H_p(z,theta)NotSingular}\\
		&=\frac{\theta\left(f(1)-2(1-\theta^2)A(z)f(z)\right)}{\sqrt{f(z)}}-(1-\theta^2)f'(z)\int_z^{f_{2}^{-1}\left((1-\theta^2)f(z)\right)}\frac{\rho(t)}{f'(t)^4}\sqrt{f(t)-(1-\theta^2)f(z)}dt,\noindent
	\end{align}
	where:
	\begin{itemize}
	\item $A:\R^+\to \R$ is defined by
	\begin{equation*}
A(t)=\frac{1}{2}-\frac{(f(t)-f(1))f''(t)}{{f'}^2(t)},
	\end{equation*}
	having a continuous extension to $\mathbb{R}^{+}$ (still denoted by $A$), with the following properties: 
    \[
    A(t)>0 \text{ for } t \in(0,1),\ A(t)<0 \text{ for } t \in(1,+\infty),\text{ and } A(1)=0. 
    \]
    Moreover, $\lim_{t\to 0^+} t^2A(t)=f(1)$ and $\lim _{t \rightarrow \infty} A_p(t)=-\frac{p-2}{2 p}<0$.
	\item $\rho:\R^+\setminus\{1\}\to\R$ is defined by 
    \[
    \rho(t)=-3f''(t)f'(t)^2+6(f(t)-f(1))f''(t)^2-2(f(t)-f(1))f'''(t)f'(t),
    \]
    and satisfies $\rho(t)<0$ for $t\neq 1$, and $\lim_{t\to 0^+}\rho(t)=-6f(1)$.
	\end{itemize}

    \smallbreak
	 In particular, the maps $(p,z)\mapsto L(p,z),\ \frac{\partial L}{\partial z}(p,z)$ are continuous. Finally, for $\theta\in (0,2]$, $\frac{\partial L}{\partial z}(p,z)<0$.
\end{lemma}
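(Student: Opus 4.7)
The plan is to derive the two integral identities by successive integrations by parts, verify the sign and continuity properties of $A$ and $\rho$ through Taylor expansion combined with direct computation, and invoke prior work for the monotonicity of $L$. Throughout I write $g(t,z) := f(t) - (1-\theta^2)f(z)$ and $b(z) := f_2^{-1}((1-\theta^2)f(z))$, so that $g(z,z) = \theta^2 f(z) > 0$ and $g(b(z),z) = 0$.

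To obtain \eqref{eqn.LDiffForm1}, I exploit the algebraic identity $(1-\theta^2)f(z) - f(1) = -g(t,z) + (f(t) - f(1))$, which gives
\[\sqrt{2}\, L(p,z)\bigl((1-\theta^2)f(z) - f(1)\bigr) = -\int_z^{b(z)}\sqrt{g}\, dt + \int_z^{b(z)}\frac{f(t) - f(1)}{\sqrt{g}}\, dt.\]
The second integral is handled by integration by parts using $\frac{d}{dt}\sqrt{g} = f'(t)/(2\sqrt{g})$, with $u = 2(f(t)-f(1))/f'(t)$ and $dv = d\sqrt{g}$: the boundary term at $t = b(z)$ vanishes because $g = 0$ there, and the lower boundary at $t = z$ produces $-2\theta(f(z)-f(1))\sqrt{f(z)}/f'(z)$. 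Regrouping yields \eqref{eqn.LDiffForm1}. For the first form of the derivative identity, I differentiate \eqref{eqn.LDiffForm1} directly in $z$; the factor $\sqrt{g}$ kills the contribution at $t = b(z)$, and, after substituting the definition of $A$ and collecting terms, the quantity $(1-\theta^2)f'(z)\sqrt{2}\, L$ appearing on the RHS cancels the one on the LHS coming from the product rule, leaving exactly \eqref{H_p(z,theta)NotSingular}. For the alternative form involving $\rho$, I apply the same integration-by-parts trick to $\int_z^{b(z)} A(t)/\sqrt{g}\, dt$, now with $u = 2A(t)/f'(t)$; the identity then reduces to the pointwise equality $2(A'(t)f'(t) - A(t)f''(t))f'(t)^2 = \rho(t)$, which is a direct, if tedious, consequence of the definition of $A$.

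The properties of $A$ and $\rho$ are then handled by Taylor expansion and elementary asymptotics. Both $f(t) - f(1)$ and $f'(t)^2$ vanish to order $2$ at $t=1$, with $f(t)-f(1) \sim \tfrac{f''(1)}{2}(t-1)^2$ and $f'(t)^2 \sim f''(1)^2(t-1)^2$; combined with $f''(1) \neq 0$ and $f''(t) \to f''(1)$, this yields $(f(t)-f(1))f''(t)/f'(t)^2 \to \tfrac12$, hence $A(1)=0$. A higher-order expansion shows that $\rho(t)$ vanishes to order $4$ at $t = 1$, so that $\rho(t)/f'(t)^4$ admits a continuous extension across $t=1$. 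The limits $t^2 A(t) \to f(1)$, $A(t) \to -(p-2)/(2p)$ and $\rho(t) \to -6 f(1)$ follow from the explicit monomial structure of $f(t) = t^2/2 - t^p/p$. Continuity of $L$ and $\partial_z L$ in $(p,z)$ then follows from the representations above, as the integrands $(1+A(t))\sqrt{g}$ (in \eqref{eqn.LDiffForm1}) and $\rho(t)\sqrt{g}/f'(t)^4$ (in the alternative form of \eqref{H_p(z,theta)NotSingular}) are jointly continuous in all parameters with removable singularities at $t=1$; the endpoint $b(z)$ also varies continuously in $(p,z)$. The concluding assertion $\partial_z L < 0$ for $\theta \in (0,2]$ is precisely \cite[Proposition 2.8]{AgostinhoCorreiaTavares2}.

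The main obstacle I anticipate lies in the strict global sign statements: $A > 0$ on $(0,1)$, $A < 0$ on $(1,\infty)$, and $\rho<0$ on $\R^+\setminus\{1\}$ for every $p > 2$. These are rational expressions in $t$ and $t^{p-2}$ whose signs are not immediate from the defining formulas, and an exact factorization analogous to the one available for $p=4$ (where, for instance, $A(t)f'(t)^2 = -(t^2-1)^3/4$ and $\rho(t) = -3(t^2-1)^4/2$, both manifestly of the right sign) does not generalize cleanly. The solution is presumably to exhibit, after clearing $f'(t)$ denominators, the relevant quantities as polynomial combinations in $t$ and $t^{p-2}$ with a manifest factor $(t-1)$ (respectively $(t-1)^4$) and a second factor whose sign can be controlled on $\R^+\setminus\{1\}$. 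Everything else reduces to controlled integration by parts or to direct appeals to the cited literature.
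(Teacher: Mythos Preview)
Your proposal is correct and follows essentially the same route as the paper. The paper itself does not reprove the integral identities or the sign properties of $A$ and $\rho$: it simply cites \cite[Lemmas 2.10--2.11, Proposition 2.12]{AgostinhoCorreiaTavares2} for those, and then checks continuity exactly as you do, via the boundedness of the integrands in the alternative representations (computing the limit $\rho(t)/f'(t)^4\to -(p^2+p-2)/(p-2)$ at $t=1$ by Taylor expansion). Your integration-by-parts derivation of \eqref{eqn.LDiffForm1} is precisely the argument that the cited paper carries out (and that the present paper reproduces, mutatis mutandis, in the proof of Lemma~\ref{lemma:mu_1reg}). The ``obstacle'' you flag---the global sign of $A$ and of $\rho$---is likewise not handled in this paper but imported from \cite{AgostinhoCorreiaTavares2}, so your assessment that it is the only nontrivial missing piece is accurate; one minor correction is that $\partial_z L<0$ is contained in the \emph{proof} of \cite[Proposition 2.8]{AgostinhoCorreiaTavares2} rather than in its statement.
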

\begin{proof}
The identities involving $L$ and its derivative correspond to Lemma 2.10, Lemma 2.11 and Proposition 2.12 in \cite{AgostinhoCorreiaTavares2}. Regarding the continuity of $L(p,z)$ in $p$ and $z$, as we are away from $t=0$ and the integrand is uniformly bounded, the right hand side of \eqref{eqn.LDiffForm1} is continuous in both variables. Since $(1-\theta^2)f(z)-f(1)$ does not vanish (it is negative), it follows that $(p,z)\mapsto L(p,z)$ is continuous. 
	
As for the continuity of $\frac{\partial L}{\partial z}(p,z)$, again from the fact that $(1-\theta^2)f(z)-f(1)<0$, we just need to check that the right-hand-side of \eqref{H_p(z,theta)NotSingular} is continuous;	for that,  it is enough to show that $\lim\limits_{t\to1}\frac{\rho(t)}{f'(t)^4}$ exists and is finite. But
	$$\lim_{t\to1}\frac{\rho(t)}{f'(t)^4}=\lim_{t\to1}-\frac{(p-2)^3(p^2+p-2)(t-1)^4+o((t-1)^5)}{(p-2)^4(t-1)^4+o((t-1)^5)}=-\frac{p^2+p-2}{p-2}, $$
	which concludes the proof. Finally, the fact that $\frac{\partial L}{\partial z}(p, z)<0$, although not explicitly stated in  \cite[Proposition 2.8]{AgostinhoCorreiaTavares2} (which only mentions strict monotonicity of $L$), is included in its proof. 
\end{proof}

With the aid of this result, we are able to study the variation of $y(p,\ell)$ and $z(p,\ell)$ with respect to $\ell$.

\begin{lemma}\label{cor:4.2.Temp}
	 The map $(p,\ell)\mapsto (y(p,\ell),z(p,\ell))$ is continuous, $C^1$ in $\ell$, with\footnote{$L^{-1}(p,\ell)$ denotes the inverse of $L$ in the $z$ variable, for each fixed $p$.} \begin{equation}\label{derz(p,ell)}
		\frac{\partial z}{\partial \ell}(p,\ell)=\frac{1}{\frac{\partial L}{\partial z}(L^{-1}(p,\ell))}<0,
	\end{equation}
	\begin{equation}\label{dery(p,ell)}
		\frac{\partial y}{\partial \ell}(p,\ell)=\frac{1}{\varphi'(y(p,\ell))}\frac{1}{\frac{\partial L}{\partial z}(L^{-1}(p,\ell))}>0.
	\end{equation}
\end{lemma}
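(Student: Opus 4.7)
The plan is to use the two defining relations from Theorem \ref{thm:description_edge_by_edge}---namely the length equation $L(p,z(p,\ell))=\ell$ and the matching condition $z(p,\ell)=\varphi(y(p,\ell))$---and to differentiate them implicitly. All the analytic heavy lifting is already contained in Lemma \ref{lemma:L'}; what remains is a routine implicit differentiation plus a short compactness argument to upgrade fiberwise regularity to joint continuity.

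First I would handle $z(p,\ell)$. By Lemma \ref{lemma:L'}, for each fixed $p>2$ the map $z\mapsto L(p,z)$ is $C^1$ on $(0,(p/2)^{1/(p-2)})$ with $\frac{\partial L}{\partial z}(p,z)<0$, and by \eqref{eq:asymptotic_L} it is a bijection onto $(0,\infty)$. The Inverse Function Theorem applied in the $z$-variable, with $p$ frozen, therefore yields that $z(p,\cdot)=L(p,\cdot)^{-1}$ is $C^1$ with
$$\frac{\partial z}{\partial\ell}(p,\ell)=\frac{1}{\frac{\partial L}{\partial z}(p,z(p,\ell))}<0,$$
which is precisely \eqref{derz(p,ell)}.

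Then \eqref{dery(p,ell)} would follow by differentiating the matching condition. The soliton $\varphi=\varphi_p$ is smooth and strictly decreasing on $[0,\infty)$, with $\varphi_p'(y)<0$ for every $y>0$ and $\varphi_p(0)=(p/2)^{1/(p-2)}$. Since $z(p,\ell)\in(0,\varphi_p(0))$, the identity $z(p,\ell)=\varphi_p(y(p,\ell))$ with $y(p,\ell)>0$ determines $y(p,\ell)=\varphi_p^{-1}(z(p,\ell))$ uniquely on the decreasing branch. Differentiating in $\ell$ and substituting the formula for $\partial_\ell z$ gives
$$\frac{\partial y}{\partial\ell}(p,\ell)=\frac{1}{\varphi'(y(p,\ell))}\,\frac{\partial z}{\partial\ell}(p,\ell)=\frac{1}{\varphi'(y(p,\ell))}\,\frac{1}{\frac{\partial L}{\partial z}(L^{-1}(p,\ell))},$$
which is positive, since both factors in the denominators are negative.

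The main (and really the only) obstacle is the joint continuity of $(p,\ell)\mapsto z(p,\ell)$, since the Inverse Function Theorem gives smoothness only in the fiber $p=\mathrm{const}$. I would argue by sequences: if $(p_n,\ell_n)\to(p,\ell)$, any subsequential limit $z^*$ of $z(p_n,\ell_n)$ must lie in $[0,(p/2)^{1/(p-2)}]$; the boundary values are ruled out by \eqref{eq:asymptotic_L} (otherwise $L(p_n,z(p_n,\ell_n))$ would tend to $0$ or $\infty$, contradicting $\ell_n\to\ell\in(0,\infty)$), and passing to the limit in $L(p_n,z(p_n,\ell_n))=\ell_n$ using the joint continuity of $L$ from Lemma \ref{lemma:L'} yields $L(p,z^*)=\ell$, forcing $z^*=z(p,\ell)$ by the strict monotonicity of $L(p,\cdot)$. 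Continuity of $y(p,\ell)=\varphi_p^{-1}(z(p,\ell))$ then follows from the continuity of $z$ together with the smooth explicit dependence of $\varphi_p$ on $p$.
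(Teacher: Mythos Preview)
Your proof is correct and follows essentially the same route as the paper: invert $L(p,\cdot)$ and $\varphi$ using the non-vanishing of $\partial_z L$ and $\varphi'$ from Lemma~\ref{lemma:L'}, then read off \eqref{derz(p,ell)}--\eqref{dery(p,ell)} by implicit differentiation. The only difference is cosmetic: the paper dispatches joint continuity in one line by citing the implicit function theorem (with $p$ as a continuous parameter), while you spell out the underlying sequential/compactness argument.
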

\begin{proof}
	Since $\frac{\partial L}{\partial z}(p,z)<0$ and $\varphi'(x)<0$ for $x>0$, by the implicit function theorem, the maps $L^{-1}$ and $\varphi^{-1}$ are continuous in $p$ and $C^1$ in $\ell$. Therefore, the claimed smoothness of $y$ and $z$ follows from the identities
    $$z(p,\ell)=L^{-1}(p,\ell),\qquad y(p,\ell)=\varphi^{-1}(z(p,\ell)).$$ 
    The identities in \eqref{derz(p,ell)} and \eqref{dery(p,ell)} are obtained by direct differentiation with respect to $\ell$. The inequality in \eqref{derz(p,ell)} follows from Lemma \ref{lemma:L'}, and the inequality in  \eqref{dery(p,ell)} follows from Lemma \ref{lemma:L'} and the fact that $y(p,\ell)>0$.
\end{proof}

\subsection{Explicit expressions for the mass of action ground-states}\label{sec:explicit_mass}

In the previous paragraphs, we recalled an explicit expression for the length $\ell$ to the solutions of the overdetermined problem \eqref{eq:sol_finite_edge}. Here, for future purposes, we do the same for the mass of action ground-states, which we always denote (in both graphs) by
\[
(p,\lambda)\mapsto \Theta(p,\lambda):=\|u^\lambda\|_{L^2(\mathcal{H}_1)}^2.
\]
By scaling,  we have
\begin{equation}\label{ThetaScaled}
	\Theta(p,\lambda)=\lambda^\frac{6-p}{2(p-2)}\|u_\ell\|^2_{\mathcal{H}_\ell},
\end{equation} 
where $\lambda=\ell^2$. From now on, we denote by $\Theta_1:(2,\infty)\times(0,\infty)\mapsto\R^+$ the function defined by 
\begin{equation}\label{eq:Theta_1}
\Theta_1(p,\ell):=\norm{u_\ell}{2}{\mathcal{H}_\ell}^2.
\end{equation}

\begin{lemma} \label{lemma:Theta_1_expressions}
For each $p>2$, $z\in\left(0,\left(\frac{p}{2}\right)^\frac{1}{p-2}\right)$, and $\theta>0$, define $\mu_1(p,z,\theta)$ and $\mu_2(p,z)$ by
\begin{equation}\label{mu_functions}
    \mu_1(p,z,\theta)=\frac{1}{\sqrt{2}}\int_{z}^{f_{2}^{-1}\left((1-\theta^2)f(z)\right)}\frac{t^2dt}{\sqrt{f(t)-(1-\theta^2)f(z)}}\quad\text{and}\quad\mu_2(p,z)=\frac{1}{\sqrt{2}}\int_0^z\frac{t^2dt}{\sqrt{f(t)}}.
\end{equation}
Then:
\begin{enumerate}
\item In the $\mathcal{T}$-graph $\mathcal{T}_\ell$, 
\begin{equation}\label{Theta_1_T'}
	\Theta_1(p,\ell)=\Theta_1(p,z(p,\ell))=\mu_1\left(p,z(p,\ell),2\right)+2\mu_2(p,z(p,\ell)).
\end{equation}
\item In the tadpole graph $\mathcal{G}_1$,
\begin{equation}\label{Theta_1_TADPOLE'}
    \Theta_1(p,\ell)=\Theta_1(p,z(p,\ell))=2\mu_1\left(p,z(p,\ell),\frac{1}{2}\right)+\mu_2(p,z(p,\ell)).
\end{equation}
\end{enumerate}
\end{lemma}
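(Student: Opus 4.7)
The plan is to directly compute the mass by splitting the integral edge by edge and then applying the first integral of the ODE on each edge to reduce to a one-dimensional integral in the phase variable $t=u_\ell(x)$. The key inputs are the edge-by-edge description of $u_\ell$ given by Theorem~\ref{thm:description_edge_by_edge} together with the Hamiltonian identity \eqref{eq:hamilt}.

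First I would handle the half-line contribution, which is common to both graphs. On each half-line, $u_h(x)=\varphi(x+y)$ with $y=y(p,\ell)$ and $\varphi(y)=z$. Since $\varphi$ satisfies the soliton equation on $\mathbb{R}$ and is even and decreasing on $[0,\infty)$, a first integration gives $\tfrac{1}{2}(\varphi')^2=f(\varphi)$; hence $\varphi'(t)=-\sqrt{2f(\varphi(t))}$ for $t>0$ (here one uses $y>0$, which is part of Theorem~\ref{thm:description_edge_by_edge}). The change of variables $s=\varphi(t)$ on $[y,\infty)$ then yields
\begin{equation*}
\int_0^\infty u_h(x)^2\,dx=\int_y^\infty \varphi(t)^2\,dt=\frac{1}{\sqrt{2}}\int_0^z\frac{s^2\,ds}{\sqrt{f(s)}}=\mu_2(p,z),
\end{equation*}
with $z=z(p,\ell)$.

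Next, I would compute the compact-edge contribution. On each compact edge of length $\ell$ (identified with $[0,\ell]$), $u_\kappa$ satisfies \eqref{eq:sol_finite_edge} with parameter $\theta$. Since $u_\kappa'(0)=\theta\sqrt{2f(z)}>0$ and $u_\kappa'(\ell)=0$, a monotonicity argument using the Hamiltonian identity \eqref{eq:hamilt} shows that $u_\kappa'>0$ on $(0,\ell)$ and $u_\kappa$ is strictly increasing from $z$ to some value $M$ with $f(M)=(1-\theta^2)f(z)$; the constraint $M>z$ combined with the location of the level set forces $M=f_2^{-1}((1-\theta^2)f(z))$ (in the tadpole case $1-\theta^2>0$ so $M>1$ is immediate; in the $\mathcal{T}$-graph case $1-\theta^2=-3<0$ and $M$ lies past the second zero of $f$ on $[1,\infty)$, still in the branch where $f_2^{-1}$ is defined). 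Then $u_\kappa'(x)=\sqrt{2\bigl(f(u_\kappa(x))-(1-\theta^2)f(z)\bigr)}$ on $[0,\ell]$, and the substitution $t=u_\kappa(x)$ gives
\begin{equation*}
\int_0^\ell u_\kappa(x)^2\,dx=\frac{1}{\sqrt{2}}\int_z^{f_2^{-1}((1-\theta^2)f(z))}\frac{t^2\,dt}{\sqrt{f(t)-(1-\theta^2)f(z)}}=\mu_1(p,z,\theta).
\end{equation*}

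Finally, I would assemble the two cases by counting edges. On $\mathcal{T}_\ell$ there are two half-lines (each contributing $\mu_2(p,z)$) and one compact pendant with $\theta=2$ (contributing $\mu_1(p,z,2)$), giving \eqref{Theta_1_T'}. On $\mathcal{G}_\ell$ there is one half-line (contribution $\mu_2(p,z)$); the loop of length $2\ell$ is split by its symmetry point $x=\ell$ (Theorem~\ref{thm:uniqueness}) into two isometric copies of the compact edge with $\theta=1/2$, yielding $2\mu_1(p,z,1/2)$ and hence \eqref{Theta_1_TADPOLE'}. The whole argument is a direct computation; the only mildly delicate point is the identification of the upper integration limit as $f_2^{-1}((1-\theta^2)f(z))$, which must be checked separately for the two signs of $1-\theta^2$, but both follow immediately from the monotonicity of $u_\kappa$ combined with \eqref{eq:hamilt}.
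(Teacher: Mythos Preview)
Your proposal is correct and follows essentially the same approach as the paper: decompose $\Theta_1$ edge by edge, use the first integral (the Hamiltonian identity on the compact edge and $\tfrac12(\varphi')^2=f(\varphi)$ on the half-lines) together with the monotonicity from Theorem~\ref{thm:description_edge_by_edge}, and perform the change of variable $t=u_\ell(x)$. Your write-up is in fact more detailed than the paper's, which simply states the change of variable and writes down the resulting integrals; your additional justification of the upper limit $f_2^{-1}((1-\theta^2)f(z))$ is a welcome elaboration (minor wording issue: in the $\mathcal{T}$-graph case you should say ``past the zero of $f$ on $[1,\infty)$'' rather than ``second zero'', since $f$ has only one zero there).
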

\begin{proof}
By Theorem \ref{thm:description_edge_by_edge}, using, in each edge, the change of variable $t=u(x)$ (which is possible, by the monotonicity of the solutions), we can rewrite the function $\Theta_1$ in $\mathcal{T}_\ell$ as
\begin{align}\label{Theta_1_T}
	\Theta_1(p,\ell)&=\int_0^\ell u^2_{\ell,h}(x)dx+2\int_0^\infty u^2_{\ell,\kappa}(x)dx\nonumber\\
	&=\frac{1}{\sqrt{2}}\int_{z(p,\ell)}^{f_{2}^{-1}\left(-3f(z(p,\ell))\right)}\frac{t^2dt}{\sqrt{f(t)+3f(z(p,\ell))}}+\frac{2}{\sqrt{2}}\int_0^{z(p,\ell)}\frac{t^2}{\sqrt{f(t)}}dt
\end{align}
Similarly, in the tadpole $\G_\ell$,
\begin{align}\label{Theta_1_TADPOLE}
	\Theta_1(p,\ell)&=2\int_0^\ell u^2_{\ell,\kappa}(x)dx+\int_0^\infty u^2_{\ell,h}(x)dx\nonumber\\
	&=\frac{2}{\sqrt{2}}\int_{z(p,\ell)}^{f_{2}^{-1}\left(\frac{3}{4}f(z(p,\ell))\right)}\frac{t^2dt}{\sqrt{f(t)-\frac{3}{4}f(z(p,\ell))}}+\frac{1}{\sqrt{2}}\int_0^{z(p,\ell)}\frac{t^2}{\sqrt{f(t)}}dt.
\end{align}\qedhere
\end{proof}

\begin{lemma}[Regularity and expressions of $\frac{\partial \mu_1}{\partial z}$ and $\frac{\partial \mu_2}{\partial z}$]\label{lemma:mu_1reg}
	Fix $\theta\in(0,\infty)$ and $0< z<\left(\frac{p}{2}\right)^\frac{1}{p-2}$. We have
    \begin{equation}\label{mu2DerAux}
		\sqrt{2}\frac{\partial \mu_2}{\partial z}(p,z)=\frac{z^2}{\sqrt{f(z)}}.
	\end{equation}
    Moreover,
    \begin{align}
		&\sqrt{2}\mu_1(p,z,\theta) \left((1-\theta^2)f(z)-f(1)\right)=-2\theta z^2\frac{(f(z)-f(1))}{f'(z)}\sqrt{f(z)}\nonumber\\
		-&\int_z^{f_{2}^{-1} \left((1-\theta^2)f(z)\right)}g(t)\sqrt{f(t)-(1-\theta^2)f(z)}dt,\label{eqn.4.21}
	\end{align}
    and
    \begin{align}
		\sqrt{2}\frac{\partial\mu_1}{\partial z}(p,z,\theta)&((1-\theta^2)f(z)-f(1))=\theta z^2\frac{f(1)}{\sqrt{f(z)}}\nonumber\\
        &-(1-\theta^2)f'(z)\int_z^{f_{2}^{-1}((1-\theta^2)f(z))}\left(t^2-\frac{g(t)}{2}\right)\frac{dt}{\sqrt{f(t)-(1-\theta^2)f(z)}},\label{mu1DerAux}
	\end{align}
	where, for every $p>2$, the function $g:\R\to\R$ is given by
	$$g(t)=3t^2-2t^2\frac{(f(t)-f(1))}{f'(t)^2}f''(t)+4t\frac{(f(t)-f(1))}{f'(t)}.
    $$
    In particular, the map 
    $$(p,z)\mapsto \left(\mu_1(p,z,\theta),\ \frac{\partial \mu_1}{\partial z} (p,z,\theta), \mu_2(p,z,\theta),\ \frac{\partial \mu_2}{\partial z}(p,z,\theta)\right)$$ is continuous. 
\end{lemma}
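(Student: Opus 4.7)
The identity for $\mu_2$ is immediate from the fundamental theorem of calculus: near $t=0$ one has $f(t)\sim t^2/2$, so $t^2/\sqrt{f(t)}\sim\sqrt{2}\,t$ is integrable, and at the upper endpoint $z<(p/2)^{1/(p-2)}$ the integrand is continuous since $f(z)>0$; this yields $\sqrt{2}\,\partial_z\mu_2(p,z)=z^2/\sqrt{f(z)}$ directly.

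For \eqref{eqn.4.21}, my plan is to mimic the integration-by-parts manoeuvre employed in Lemma \ref{lemma:L'}. Setting $c:=(1-\theta^2)f(z)$ and $b:=f_2^{-1}(c)$, the algebraic decomposition
\[
\frac{t^2(c-f(1))}{\sqrt{f(t)-c}}=-t^2\sqrt{f(t)-c}+\frac{t^2(f(t)-f(1))}{\sqrt{f(t)-c}}
\]
splits $\sqrt{2}(c-f(1))\mu_1$ into a regular piece and a singular one. On the singular piece I would integrate by parts using the factorization $t^2(f(t)-f(1))/\sqrt{f(t)-c}=[2t^2(f(t)-f(1))/f'(t)]\,\tfrac{d}{dt}\sqrt{f(t)-c}$. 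The boundary term at $t=b$ vanishes since $f(b)=c$ and $f'(b)\neq 0$ (because $b>1$), while at $t=z$ it contributes $-2\theta z^2(f(z)-f(1))\sqrt{f(z)}/f'(z)$. A direct chain-rule computation confirms the identity $t^2+\tfrac{d}{dt}\!\left(\tfrac{2t^2(f(t)-f(1))}{f'(t)}\right)=g(t)$, and combining the leftover integrand with the $-t^2\sqrt{f(t)-c}$ term produces exactly \eqref{eqn.4.21}.

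To derive \eqref{mu1DerAux}, I would then differentiate \eqref{eqn.4.21} in $z$. The right-hand side is regular at both endpoints (the integrand $g(t)\sqrt{f(t)-c}$ vanishes there), so Leibniz's rule applies; using $\partial_z c=(1-\theta^2)f'(z)$, the integral term differentiates to
\[
\frac{d}{dz}\int_z^{b(z)}\!g(t)\sqrt{f(t)-c}\,dt=-\theta g(z)\sqrt{f(z)}-\frac{(1-\theta^2)f'(z)}{2}\int_z^{b(z)}\!\frac{g(t)\,dt}{\sqrt{f(t)-c}},
\]
while a slightly tedious chain-rule calculation yields $\frac{d}{dz}\!\bigl[2\theta z^2(f(z)-f(1))\sqrt{f(z)}/f'(z)\bigr]=\theta g(z)\sqrt{f(z)}-\theta z^2 f(1)/\sqrt{f(z)}$. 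The two $\theta g(z)\sqrt{f(z)}$ contributions cancel, and rewriting the leftover $(1-\theta^2)f'(z)\sqrt{2}\mu_1$ term coming from differentiating the LHS of \eqref{eqn.4.21} via $\sqrt{2}\mu_1=\int_z^{b(z)}t^2/\sqrt{f(t)-c}\,dt$ merges with the $\tfrac12 g(t)$ integral to produce the integrand $t^2-g(t)/2$ of \eqref{mu1DerAux}.

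Continuity of all four maps in $(p,z)$ then follows by inspection of the right-hand sides of these identities: $(1-\theta^2)f(z)-f(1)$ is strictly negative and locally bounded away from zero, the integrand of \eqref{eqn.4.21} is continuous up to the endpoints (with a continuous extension of $g$ at $t=1$, verified as in Lemma \ref{lemma:L'}), and the integrable square-root singularities at $t=z,b$ of $(t^2-g(t)/2)/\sqrt{f(t)-c}$ in \eqref{mu1DerAux} are handled by a routine dominated-convergence argument. The main obstacle in executing this plan is purely algebraic bookkeeping: verifying the chain-rule identity for $\frac{d}{dz}\!\bigl[2\theta z^2(f(z)-f(1))\sqrt{f(z)}/f'(z)\bigr]$ carefully enough to witness the key cancellation that reduces the derivative to the clean form \eqref{mu1DerAux}.
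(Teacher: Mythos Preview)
Your proposal is correct and follows essentially the same route as the paper. The paper packages the integration by parts for \eqref{eqn.4.21} as the line integral of the exact form $dW$ with $W(x,y)=\frac{f(x)-f(1)}{f'(x)}x^2 y$ along the level curve $-\tfrac12 y^2+f(x)=(1-\theta^2)f(z)$, but unwinding this is precisely your direct integration by parts via $\tfrac{d}{dt}\sqrt{f(t)-c}$; your chain-rule identities $t^2+\tfrac{d}{dt}\!\bigl(2t^2(f(t)-f(1))/f'(t)\bigr)=g(t)$ and $\tfrac{d}{dz}\!\bigl[2\theta z^2(f(z)-f(1))\sqrt{f(z)}/f'(z)\bigr]=\theta g(z)\sqrt{f(z)}-\theta z^2 f(1)/\sqrt{f(z)}$ are exactly what the paper computes line by line. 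The only minor divergence is in the continuity of $\partial_z\mu_1$: the paper performs one further integration by parts on the integral in \eqref{mu1DerAux} to replace the singular factor $1/\sqrt{f(t)-c}$ by the vanishing factor $\sqrt{f(t)-c}$ (then checks the new integrand has a finite limit at $t=1$), whereas you appeal directly to dominated convergence on the moving-endpoint integral; both are valid, with the paper's variant giving a uniformly bounded integrand and yours requiring a routine change of variables to freeze the endpoints.
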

\begin{proof}
	Fix $\theta\in(0,\infty)$. The continuity of $\mu_2$ follows easily from the Dominated Convergence Theorem. Since the integrand is integrable at the origin, we have $\frac{\partial\mu_2}{\partial z}(p,z)=\frac{z^2}{\sqrt{2f(z)}},$
	which is continuous in both variables. 
	
	As for $\mu_1$, we argue similarly to the proof of Lemma \ref{lemma:L'} (whose detailed computations, as recalled, are included in \cite{AgostinhoCorreiaTavares2}). We write
	\begin{align}\label{eqn.4.22_AUX}
		\sqrt{2}&\mu_1(p,z,\theta) \left((1-\theta^2)f(z)-f(1)\right)=-\int_z^{f_{2}^{-1}\left((1-\theta^2)f(z)\right)}t^2\sqrt{f(t)-(1-\theta^2)f(z)}dt\nonumber\\
		+&\int_z^{f_{2}^{-1}\left((1-\theta^2)f(z)\right)}\frac{(f(t)-f(1))t^2dt}{\sqrt{f(t)-(1-\theta^2)f(z)}},
	\end{align}
	and define $W:\R\setminus\{1\}\times\R\to\R$ by $W(x,y)=\frac{f(x)-f(1)}{f'(x)}x^2y$, which is of class $C^1$ in its domain and can be extended by continuity to $\R^2$. Consider now the curve $\gamma:\left[z,f^{-1}\left((1-\theta^2)f(z)\right)\right]\to \R^2$ defined by
	$\gamma(t)=(x(t),y(t))=\left(t,\sqrt{2}\sqrt{f(t)-(1-\theta^2)f(z)}\right),$
	which parametrizes the level curve $\Gamma$, $-y^2+f(x)=(1-\theta^2)f(z)$, for $y\geq0$.
	
	Then
	\begin{align*}
		-W(z,\theta\sqrt{2}\sqrt{f(z)})&=W\left(f^{-1}\left((1-\theta^2)f(z)\right),0\right)-W\left(z,\theta\sqrt{2}\sqrt{f(z)}\right)=\int_\Gamma 
		dW(x,y)\nonumber\\
		&=\int_\Gamma\frac{\partial}{\partial x}W(x,y)dx+\frac{\partial}{\partial y}W(x,y)dy\\
		&=\int_\Gamma y\left(2x\frac{f(x)-f(1)}{f'(x)}+x^2\left(1-\frac{f(x)-f(1)}{f'(x)^2}f''_p(x)\right)\right)dx+x^2\frac{f(x)-f(1)}{f'(x)}dy,
	\end{align*}
	and, taking $(x,y)=(x(t),y(t))=\gamma(t)$, we obtain
	\begin{align*}
		&\int_z^{f_{2}^{-1}\left((1-\theta^2)f(z)\right)}t^2\frac{f(t)-f(1)}{\sqrt{f(t)-(1-\theta^2)f(z)}}dt=-\sqrt{2}W(z,\theta\sqrt{2}\sqrt{f(z)})\\-&2\int_z^{f_{2}^{-1}\left((1-\theta^2)f(z)\right)}\sqrt{f(t)-(1-\theta^2)f(z)}\left(2t\frac{f(t)-f(1)}{f'(t)}+t^2\left(1-\frac{f(t)-f(1)}{f'(t)^2}f''_p(t)\right)\right)dt.
	\end{align*}
Equation \eqref{eqn.4.21} follows from plugging  the previous expression into \eqref{eqn.4.22_AUX}. 

For each fixed $\theta>0$, the continuity of the map $(p,z)\mapsto \mu_1(p,z,\theta)$ will follow by the same kind of arguments as in Lemma \ref{lemma:L'}. Let us compute its derivative.  Differentiating both sides of \eqref{eqn.4.21} with respect to $z$, we obtain
	\begin{align*}
		\sqrt{2}\frac{\partial\mu_1}{\partial z}&(p,z,\theta)((1-\theta^2)f(z)-f(1))=-\sqrt{2}(1-\theta^2)f'(z)\mu_1(p,z,\theta)+\frac{d}{dz}\left(-2\theta z^2\sqrt{f(z)}\frac{f(z)-f(1)}{f'(z)}\right)\nonumber\\
		-&\frac{d}{dz}\int_z^{f_{2}^{-1}((1-\theta^2)f(z))}g(t)\sqrt{f(t)-(1-\theta^2)f(z)}dt\nonumber\\
		&=-\sqrt{2}(1-\theta^2)f'(z)\mu_1(p,z,\theta)+\frac{d}{dz}\left(-2\theta z^2\sqrt{f(z)}\frac{f(z)-f(1)}{f'(z)}\right)+\theta g(z)\sqrt{f(z)}\nonumber\\
		+&(1-\theta^2)f'(z)\int_z^{f_{2}^{-1}((1-\theta^2)f(z))}\frac{g(t)}{2}\frac{dt}{\sqrt{f(t)-(1-\theta^2)f(z)}}.
	\end{align*}

	Computing the derivative of the non-integral term in the previous expression and simplifying, we get
	\begin{align}\label{mu1DerAux}
		\sqrt{2}\frac{\partial\mu_1}{\partial z}&(p,z,\theta)((1-\theta^2)f(z)-f(1))=
		-\frac{\theta z^2(f(z)-f(1))}{\sqrt{f(z)}}+2\theta z^2\frac{(f(z)-f(1))\sqrt{f(z)}f''(z)}{{f'}(z)^2}\nonumber\\
		-&4\theta z\frac{(f(z)-f(1))\sqrt{f(z)}}{f'(z)}
		-2\theta z^2\sqrt{f(z)}+3\theta z^2\sqrt{f(z)}-2\theta z^2\frac{(f(z)-f(1))\sqrt{f(z)}f''(z)}{{f'}(z)^2}\nonumber\\
		+&4\theta z\frac{(f(z)-f(1))\sqrt{f(z)}}{f'(z)}-(1-\theta^2)f'(z)\int_z^{f_{2}^{-1}((1-\theta^2)f(z))}\left(t^2-\frac{g(t)}{2}\right)\frac{dt}{\sqrt{f(t)-(1-\theta^2)f(z)}}\nonumber\\
		&=\theta z^2\frac{f(1)}{\sqrt{f(z)}}-(1-\theta^2)f'(z)\int_z^{f_{2}^{-1}((1-\theta^2)f(z))}\left(t^2-\frac{g(t)}{2}\right)\frac{dt}{\sqrt{f(t)-(1-\theta^2)f(z)}}.
	\end{align}

	Now, we prove the continuity of the map $(p,z)\mapsto\frac{\partial\mu_1}{\partial z}(p,z,\theta)$. This is trivial in the case $\theta=1$; therefore, we assume $\theta\neq 1$ in what follows. 
	
	We consider the integral term in \eqref{mu1DerAux}. Integrating this term by parts with respect to $t$, we get
	\begin{equation}\label{New_Int_EXPRESSION}
		-\frac{\theta(2z^2-g(z))\sqrt{f(z)}}{f'(z)}-\int_z^{f_{2}^{-1}((1-\theta^2)f(z))}\frac{d}{dt}\left(\frac{2t^2-g(t)}{f'(t)}\right)\sqrt{f(t)-(1-\theta^2)f(z)}.
	\end{equation}
	The non-integral term is continuous in both variables. Now, we observe that integrand function in this new integral expression contains no singularities, since
	$$\lim\limits_{t\to1}\frac{d}{dt}\left(\frac{2t^2-g(t)}{f'(t)}\right)  =\lim\limits_{t\to1}\frac{(4t-g'(t))f'(t)-(2t^2-g(t))f''(t)}{f'(t)^2}=\frac{22-11p+p^2}{6(p-2)}. $$ 
    Consequently, the continuity of the integral term in \eqref{New_Int_EXPRESSION} follows from the Dominated Convergence Theorem, since the integrand is uniformly bounded in compact sets that are far away from zero. This finishes the proof.
\end{proof}

\begin{lemma}[Explicit expression of $\frac{\partial \Theta_1}{\partial z}$ in the $\mathcal{T}$ and tadpole graphs.]\label{lemma:4.7.Temp}
	Let $p>2$ and $ 0<z<\left(\frac{p}{2}\right)^\frac{1}{p-2}$, and let $\Theta_1$ be as in \eqref{eq:Theta_1}. Then
	\begin{enumerate}
    \item For the $\mathcal{T}$-graph ($\theta=2$):
    \begin{align*}
		\sqrt{2}\frac{\partial\Theta_1}{\partial z}(p,z)&\left(-3f(z)-f(1)\right)=-6z^2\sqrt{f(z)}\\
		+&3f'(z)\int_z^{f_{2}^{-1}\left(-3f(z)\right)}\left(t^2-\frac{g(t)}{2}\right)\frac{dt}{\sqrt{f(t)+3f(z)}}.
	\end{align*}
	\item For the tadpole graph ($\theta=1/2$):
    \begin{align*}
		\sqrt{2}\frac{\partial\Theta_1}{\partial z}\left(p,z\right)&\left(\frac{3}{4}f(z)-f(1)\right)=\frac{3}{4}z^2\sqrt{f(z)}\\
		-&\frac{3}{4}f'(z)\int_z^{f_{2}^{-1}\left(3f(z)/4\right)}\left(t^2-\frac{g(t)}{2}\right)\frac{dt}{\sqrt{f(t)-\frac{3}{4}f(z)}}.
	\end{align*}
\end{enumerate}
    In particular, the map $(p,z)\mapsto \frac{\partial \Theta_1}{\partial z}(p,z)$ is continuous.
\end{lemma}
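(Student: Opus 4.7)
The proof is a direct consequence of the decompositions of $\Theta_1$ given in Lemma \ref{lemma:Theta_1_expressions} together with the explicit formulas for $\partial_z \mu_1$ and $\partial_z \mu_2$ derived in Lemma \ref{lemma:mu_1reg}. The plan is therefore to differentiate term by term, multiply through by the appropriate $((1-\theta^2)f(z)-f(1))$ factor to clear denominators, and observe that the $f(1)/\sqrt{f(z)}$ contributions cancel in a clean way, leaving only the claimed expressions.

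More concretely, for the $\mathcal{T}$-graph I would start from $\Theta_1(p,z)=\mu_1(p,z,2)+2\mu_2(p,z)$, so that
\[
\sqrt{2}\frac{\partial \Theta_1}{\partial z}(p,z)= \sqrt{2}\frac{\partial\mu_1}{\partial z}(p,z,2)+2\sqrt{2}\frac{\partial\mu_2}{\partial z}(p,z).
\]
Multiplying by $-3f(z)-f(1)$ (which is $(1-\theta^2)f(z)-f(1)$ for $\theta=2$), the first term gives, via \eqref{mu1DerAux}, the quantity $2z^2 f(1)/\sqrt{f(z)}+3f'(z)\!\int\!(\cdots)$, while the second term contributes $2(z^2/\sqrt{f(z)})(-3f(z)-f(1)) = -6z^2\sqrt{f(z)}-2z^2 f(1)/\sqrt{f(z)}$. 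The two $f(1)/\sqrt{f(z)}$ pieces cancel and one obtains the stated identity. The tadpole case runs identically with $\theta=1/2$, $(1-\theta^2)f(z)-f(1)=\tfrac{3}{4}f(z)-f(1)$, and the weights $2$ and $1$ on $\mu_1$ and $\mu_2$ respectively; the $f(1)/\sqrt{f(z)}$ terms again cancel by design of the formula in Lemma \ref{lemma:mu_1reg}.

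For the final continuity statement, I would note that both factors $-3f(z)-f(1)$ and $\tfrac{3}{4}f(z)-f(1)$ are bounded away from zero on the relevant range $z\in(0,(p/2)^{1/(p-2)})$: the first is clearly strictly negative, and for the second one uses that $f$ attains its maximum on $\mathbb{R}^+$ at $t=1$, so $\tfrac{3}{4}f(z)-f(1)\leq -\tfrac{1}{4}f(1)<0$. Dividing the derived identities by these nonvanishing, continuous factors, the continuity of $(p,z)\mapsto \partial_z\Theta_1(p,z)$ is then inherited from the continuity of $\partial_z\mu_1$ and $\partial_z\mu_2$ established at the end of Lemma \ref{lemma:mu_1reg}.

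There is no real obstacle here beyond careful bookkeeping of the prefactors; the whole content of the lemma is algebraic, and the delicate analytic point (the integrability near $t=1$ inside the integral $\int (t^2-g(t)/2)/\sqrt{f(t)-(1-\theta^2)f(z)}\,dt$) has already been settled in the proof of Lemma \ref{lemma:mu_1reg} through the integration-by-parts trick of \eqref{New_Int_EXPRESSION}. Consequently, the proof should reduce to a short verification.
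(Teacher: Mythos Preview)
Your proposal is correct and follows exactly the same approach as the paper, whose proof consists of a single sentence stating that the result is a direct consequence of Lemmas \ref{lemma:Theta_1_expressions} and \ref{lemma:mu_1reg}. You have simply spelled out the bookkeeping and the nonvanishing of the multiplicative factors that the paper leaves implicit.
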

\begin{proof}
	Recalling the expression of $\Theta_1$ from Lemma \ref{lemma:Theta_1_expressions} in each graph, this is a direct consequence of Lemma \ref{lemma:mu_1reg}.
\end{proof}

\section{Grillakis-Shatah-Strauss theory in the $\mathcal{T}$ and tadpole graphs}\label{sec:GSS_particular}

In this section, we check that conditions $(A1)$, $(A2)$ and $(A3)$ from Section \ref{sec:GSS} are valid in the $\mathcal{T}$ graph $\mathcal{T}_1$ (terminal edge of length 1) and in the tadpole graph $\mathcal{G}_1$ (terminal loop of length 2). Recall that we can assume that $\ell=1$, up to a scaling.

\begin{lemma}\label{lemma:A1}
Let $\mathcal{H}_1$ be either the $\mathcal{T}$-graph $\mathcal{T}_1$ or the tadpole graph $\mathcal{G}_1$. Then, condition $(A_1)$ is satisfied.
\end{lemma}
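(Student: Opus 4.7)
The plan is to adapt the classical local well-posedness theory for the nonlinear Schr\"odinger equation (as developed, e.g., in \cite{cazenave2003semilinear}) to the graph setting. The two key ingredients that need to be secured on $\mathcal{H}_1$ are: (i) the fact that the Laplacian equipped with Neumann--Kirchhoff conditions at the central vertex is a self-adjoint operator on $L^2(\mathcal{H}_1)$, and (ii) a Sobolev-type embedding that controls the nonlinearity $u\mapsto |u|^{p-2}u$. Once these are in place, the argument follows verbatim from the euclidean case.

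First, I would invoke the standard fact (see \cite{berkolaiko2013introduction}) that the Kirchhoff Laplacian $-\Delta_K$ on $\mathcal{H}_1$ is self-adjoint on $L^2(\mathcal{H}_1;\mathbb{C})$, with form domain $H^1(\mathcal{H}_1;\mathbb{C})$. By Stone's theorem, $e^{it\Delta_K}$ defines a unitary group on $L^2$ which leaves $H^1$ invariant. For the nonlinearity, since each edge of $\mathcal{H}_1$ is one-dimensional and the graph has finitely many edges, one has the continuous embedding $H^1(\mathcal{H}_1)\hookrightarrow L^\infty(\mathcal{H}_1)\cap C(\mathcal{H}_1)$. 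A direct computation then shows that the map $u\mapsto |u|^{p-2}u$ is locally Lipschitz from $H^1(\mathcal{H}_1;\mathbb{C})$ into itself (a bound on $\|u\|_{L^\infty}$ controls both the pointwise and the gradient pieces after differentiating through the absolute value).

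Next, I would set up a contraction mapping argument on the Duhamel integral
\[
v(t)=e^{it\Delta_K}v_0+i\int_0^t e^{i(t-s)\Delta_K}\bigl(|v(s)|^{p-2}v(s)\bigr)\,ds
\]
in the space $C([0,T];H^1(\mathcal{H}_1;\mathbb{C}))$ for $T$ sufficiently small, producing a unique maximal solution $v\in C([0,T_0),X)$. This is the standard $H^1$-subcritical argument: no Strichartz estimates are needed, as local Lipschitz continuity of the nonlinearity in $H^1$ suffices (cf.\ \cite[Corollary 4.3.3]{cazenave2003semilinear}).

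Finally, for the conservation of $Q$ and $E$, I would argue by a regularization procedure: for initial data $v_0$ in $D(\Delta_K)$ one has $v\in C^1([0,T_0);L^2)\cap C([0,T_0);D(\Delta_K))$, and a direct computation using the Kirchhoff conditions at the vertex (so that the boundary terms in the integration by parts vanish) gives $\frac{d}{dt}Q(v(t),\mathcal{H}_1)=0$ and $\frac{d}{dt}E(v(t),\mathcal{H}_1)=0$. Density of $D(\Delta_K)$ in $H^1(\mathcal{H}_1;\mathbb{C})$, together with the continuous dependence on initial data obtained from the contraction argument, extends these conservation laws to all $v_0\in X$. The main (essentially notational) obstacle is simply to verify that the boundary contributions at the Kirchhoff vertex vanish in the integration by parts underlying both conservation laws; this is immediate from the continuity of $v$ across the vertex and the vanishing of the sum of outgoing derivatives.
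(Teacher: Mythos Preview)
Your proposal is correct and follows the same strategy that underlies the paper's proof: the paper does not spell out the argument but simply observes that $\mathcal{T}_1$ and $\mathcal{G}_1$ are \emph{starlike graphs} and cites \cite{cacciapuoti2017ground}, where local well-posedness in $H^1(\mathcal{G})$ for all $p>2$ is obtained precisely via a fixed-point argument on the Duhamel formulation, together with conservation of mass and energy. Your sketch is a faithful outline of what that reference does. One small caveat worth noting: for $2<p<3$ the map $u\mapsto |u|^{p-2}u$ is not locally Lipschitz from $H^1$ into $H^1$ (the derivative of the nonlinearity is only H\"older in $u$), so the contraction must be set up with a little more care (e.g.\ contraction in a weaker norm while boundedness is kept in $H^1$, as in Kato's method or \cite[Chapter~4]{cazenave2003semilinear}); this is standard and does not affect the conclusion.
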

\begin{proof}
The tadpole and $\mathcal{T}-$graphs are a part of a more general family of graphs, called starlike graphs. This family of graphs was introduced in \cite{cacciapuoti2017ground} and it consists of metric graphs with \textit{any} compact core and at least one half-line. In \cite[Proposition 2.2]{cacciapuoti2017ground} the authors prove, via fixed point arguments, that the IVP associated to the equation \eqref{NLSE_Chapter5} is locally well-posed in $H^1$ for any $p>2$. Moreover, due to energy and mass conservation, see \cite[Proposition 2.3]{cacciapuoti2017ground}, a global well-posedness result in $H^1$ is obtained for any $L^2$-subcritical power, see \cite[Theorem 2.3]{cacciapuoti2017ground}.
\end{proof}

Assumption $(A_2)$ concerns the existence of a regular curve of bound-states. Recall from Theorem \ref{thm:uniqueness} that in $\mathcal{H}_1$, for each $\lambda>0$, there exists exactly one positive action ground-state, which we denote by $u^\lambda$. 
 
\begin{lemma}\label{lemma:5.32}
	The curve $\lambda\in (0,\infty)\mapsto u^\lambda\in H^1(\mathcal{H}_1)$ is of class $C^1$. In particular, condition $(A_2)$ is satisfied.
\end{lemma}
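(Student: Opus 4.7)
The plan is to combine the explicit edge-by-edge description of $u^\lambda$ from Theorem \ref{thm:description_edge_by_edge}, the $C^1$-regularity in $\ell$ of the phase-plane parameters $y(p,\ell)$ and $z(p,\ell)$ already established in Lemma \ref{cor:4.2.Temp}, and the classical smooth dependence of ODE flows on initial data. Via the scaling $u^\lambda(x) = \lambda^{1/(p-2)} u_\ell(\sqrt{\lambda}\,x)$ with $\ell = \sqrt{\lambda}$, and observing that the scaling map itself is $C^1$ in $\lambda \in (0,\infty)$, the problem reduces to establishing $C^1$-regularity of $\ell \mapsto u_\ell$ piece by piece.

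On each half-line, Theorem \ref{thm:description_edge_by_edge} yields $u_\ell(x) = \varphi(x + y(p,\ell))$; since $\varphi$ decays exponentially, the translation map $y \mapsto \varphi(\cdot + y)$ is of class $C^1$ from $\mathbb{R}$ into $H^1([0,\infty))$, and the $C^1$-regularity of the half-line component follows immediately from the $C^1$-regularity of $\ell \mapsto y(p,\ell)$. On the compact edge, $u_{\kappa,\ell}$ is the solution of the Cauchy problem
\[
-u_\kappa'' + u_\kappa = u_\kappa^{p-1}, \qquad u_\kappa(0) = z(p,\ell), \qquad u_\kappa'(0) = \theta \sqrt{2 f(z(p,\ell))},
\]
whose initial data depend in a $C^1$ manner on $\ell$ by Lemma \ref{cor:4.2.Temp}. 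To deal with the fact that the domain $[0,\ell]$ itself varies with the parameter, I would fix a base frequency $\lambda_\star$ and a neighborhood $(\lambda_\star - \delta, \lambda_\star + \delta) \subset (0,\infty)$, choose $\ell_0 > \sqrt{\lambda_\star + \delta}$, extend the autonomous ODE to the fixed interval $[0,\ell_0]$, and apply standard smooth dependence on initial data to obtain $C^1$-regularity as a map with values in $C^1([0,\ell_0]) \hookrightarrow H^1([0,\ell_0])$. Restriction to $[0,\ell]$, together with the smoothness of $\ell \mapsto \ell$, then preserves the $C^1$-dependence up to the endpoint.

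Piecing together the half-line and compact-edge components (continuity at the vertex being automatic from the explicit formulae) and composing with the smooth scaling yields $\lambda \mapsto u^\lambda \in C^1((0,\infty); H^1(\mathcal{H}_1))$, which in particular verifies $(A_2)$ on any open interval $(\lambda_1, \lambda_2) \subset (0,\infty)$. The only real subtlety is the varying length $\ell$ of the compact edge, which is handled by the fixed-interval extension above; alternatively, one could rescale the compact edge of $\mathcal{H}_1$ to a fixed reference interval and then differentiate, reducing the problem to a parameter-dependent two-point boundary value problem on a fixed domain, where smoothness is standard.
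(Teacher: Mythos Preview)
Your proposal is correct and follows essentially the same approach as the paper: reduce to the rescaled ground-state $u_\ell$ via $\ell=\sqrt{\lambda}$, use Lemma~\ref{cor:4.2.Temp} for the $C^1$-dependence of $y(p,\ell)$ and $z(p,\ell)$, invoke the explicit formula $u_{h,\ell}(x)=\varphi(x+y(p,\ell))$ on half-lines, and appeal to smooth dependence of the ODE flow on initial data for the compact edge. The paper's version is terser (it simply cites \cite[Proposition 9.4]{amann1990ordinary} for the compact-edge dependence and does not spell out the varying-domain issue), whereas you are more explicit about why translation is $C^1$ into $H^1$ and how to absorb the $\ell$-dependent interval into a fixed one; these are genuine clarifications but not a different strategy.
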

\begin{proof}
	Recall from Theorem \ref{thm:description_edge_by_edge} the definition of $y(p,\ell)$ and $z(p,\ell)=\varphi(y(p,\ell))$. By Lemma \ref{cor:4.2.Temp}, these maps are $C^1$ in the variable $\ell$. Since $u_{h,\ell}$ is explicitly given as
    \[
    u_{h,\ell}(x)=\varphi(x+y(p,\ell)),
    \]
    the action ground-state depends on $\ell$ as a $C^1$ map. In the compact edge, $u_{\kappa,\ell}$ satisfies \eqref{eq:sol_finite_edge}, for which one has $C^1$ dependence on the parameters (see, for instance, \cite[Proposition 9.4]{amann1990ordinary}).  Finally, since $\ell=\sqrt{\lambda}$, the conclusion follows.
\end{proof}

The verification of $(A_3)$ is, as usual, much more delicate, and we dedicate the following section to it. 

\subsection{Spectral Analysis of $H_\lambda$: condition $(A_3)$}\label{AssumptionA_3}

The goal of this section is to prove that assumption $(A_3)$ holds true for both the tadpole and $\mathcal{T}$-graphs. Recalling that the length function $L$ is defined in \eqref{eq:Length}, we show the following.

\begin{proposition}\label{prop:5.26}
Consider the metric graph $\mathcal{H}_1$. For $p>2$ fixed, set\footnote{{Observe that $\lambda^*$  depends on the considered graph through the parameter $\theta$ (recall formula \eqref{eq:Length}).}} $\lambda^*:=(L(p,1))^2$. For any interval $J\subset\R^+\setminus\{\lambda^*\}$, the operator $H_\lambda:X\to X'$ satisfies assumption $(A_3)$.
\end{proposition}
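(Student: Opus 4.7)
The plan is to reduce $(A_3)$ to the spectral analysis of two real self-adjoint Schr\"odinger-type operators on the graph. Since $\Phi_\lambda=u^\lambda$ is real-valued, writing $v=v_1+iv_2$ and computing $S_\lambda''(u^\lambda)$ yields the block-diagonal form
\[
\langle H_\lambda v,v\rangle=\langle L_+ v_1,v_1\rangle+\langle L_- v_2,v_2\rangle,
\]
with
\[
L_+=-\partial_{xx}+\lambda-(p-1)(u^\lambda)^{p-2},\qquad L_-=-\partial_{xx}+\lambda-(u^\lambda)^{p-2},
\]
both acting on $H^1(\mathcal{H}_1;\R)$ with Kirchhoff-Neumann conditions at the vertex. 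Since $u^\lambda$ decays exponentially on the half-lines (Theorem \ref{thm:description_edge_by_edge}), the potentials are relatively compact perturbations of $-\partial_{xx}+\lambda$, and a Weyl-type argument adapted to metric graphs gives $\sigma_{\mathrm{ess}}(L_\pm)=[\lambda,\infty)$. Consequently $\sigma_{\mathrm{ess}}(H_\lambda)\subset[\lambda,\infty)$ is automatically positive and bounded away from zero, and what remains is the control of the finitely many eigenvalues below $\lambda$.

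Next I would handle $L_-$. From \eqref{StationaryNLS_Chapter5}, $L_- u^\lambda=0$, and $u^\lambda>0$ with $u^\lambda\in L^2(\mathcal{H}_1)$. Since the heat semigroup $e^{-tL_-}$ is positivity-improving on the connected metric graph $\mathcal{H}_1$, a Perron--Frobenius argument forces $0$ to be the simple bottom eigenvalue of $L_-$. Hence $L_-\geq 0$ with $\ker L_-=\mathrm{span}(u^\lambda)$, which immediately gives $\ker H_\lambda\supseteq\mathrm{span}(iu^\lambda)$; equality is equivalent to $\ker L_+=\{0\}$.

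For $L_+$ I would first establish that the Morse index equals one. Testing with $u^\lambda$,
\[
\langle L_+ u^\lambda,u^\lambda\rangle=-(p-2)\int_{\mathcal{H}_1}|u^\lambda|^p\,dx<0,
\]
so there is at least one negative eigenvalue. Conversely, the variational characterization of $u^\lambda$ as a minimizer of $S_\lambda$ over the Nehari-type manifold $\{u\neq 0:\langle S_\lambda'(u),u\rangle=0\}$ (of codimension one) forces the Morse index to be at most one. Hence $L_+$ has exactly one simple negative eigenvalue.

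The delicate and main obstacle will be to show $\ker L_+=\{0\}$ for $\lambda\neq\lambda^*$. Any $\phi\in\ker L_+$ solves the linear ODE $-\phi''+\lambda\phi=(p-1)(u^\lambda)^{p-2}\phi$ on each edge. On each half-line the $L^2$ constraint picks out the one-dimensional subspace spanned by the decaying solution, explicitly $\varphi'(\,\cdot+y(p,\ell))$. On the compact edge the solution space is two-dimensional and contains $u_\kappa'$ (obtained by differentiating \eqref{eq:sol_finite_edge}), together with a second linearly independent solution produced via reduction of order. Gluing these pieces through continuity, the Kirchhoff condition at the junction, and the endpoint constraint (Neumann at the tip for $\mathcal{T}_1$, periodic matching for $\mathcal{G}_1$) yields a small linear system in the remaining free constants. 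Using the Hamiltonian identity \eqref{eq:hamilt} together with the expressions of Lemma \ref{lemma:L'}, I expect the determinant of this system to factor as a nonvanishing quantity times $f(z(p,\ell))-f(1)$. Since $f$ attains its unique maximum at $t=1$ in the relevant range $(0,(p/2)^{1/(p-2)})$, this factor vanishes precisely when $z(p,\ell)=1$, i.e. $\ell=L(p,1)$, equivalently $\lambda=\lambda^*$. Isolating this determinantal structure and tying it to the length function $L$ is the principal technical difficulty of the argument.
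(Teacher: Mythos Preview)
Your outline matches the paper closely through the block decomposition, the essential spectrum via Weyl, the $L_-$ analysis, and the Morse-index computation for $L_+$. One minor point: for the upper bound on the Morse index the paper does not use the Nehari manifold but rather the fact that $u^\lambda$ minimizes $I_\lambda(u)=\frac{1}{2}\int(|u'|^2+\lambda|u|^2)$ under the constraint $\frac{1}{p}\int|u|^p=\mu$; this is a genuine codimension-one constraint with nonnegative Hessian, whereas the Nehari constraint is not a manifold at $u^\lambda$ in the usual sense and needs extra care.

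The substantive divergence is in the kernel analysis of $L_+$. You propose producing the second solution on the compact edge by reduction of order and then hope the resulting determinant factors as a nonvanishing quantity times $f(z)-f(1)$. The paper takes a different and more efficient route: it parametrizes the ground state on the compact edge by the Hamiltonian level $C=(1-\theta^2)f(z)$ and chooses the second solution as $W=\partial_C w$. Differentiating the overdetermined boundary data of $w$ in $C$ then gives $W(0),W'(0),W'(\ell)$ explicitly in terms of $\varphi', \varphi''$ and $\ell'(C)$, and the identity $-w'W'+f'(w)W=1$ (from differentiating \eqref{eq:hamilt}) delivers the needed non-degeneracy. This makes the linear system for the matching conditions completely explicit; in the tadpole case the determinant reduces to $-3\varphi'(y)\varphi''(y)\ell'(C)$, where $\varphi''(y)=f'(z)$.

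Your anticipated factor $f(z)-f(1)$ is therefore not what actually appears: the obstruction at $\lambda^*$ comes from $f'(z)=0$ at $z=1$, which is precisely where $C'(\ell)=0$ so that the $C$-parametrization (and hence the definition of $W$) breaks down. Both conditions single out $z=1$, but the mechanism is different, and a reduction-of-order computation would not obviously produce your factor. I would expect your approach to become considerably more laborious at the matching step, since the reduction-of-order solution carries an integral with a singularity at the point where $u_\kappa'=0$ (the tip or the loop midpoint), whereas the paper's $W=\partial_C w$ is smooth there and its boundary values are read off directly.
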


Here, we benefit from the fact that the curve $\lambda\mapsto u^\lambda$ consists of action ground-states. This makes the negative eigenvalue property particularly simple to analyze, see Lemma \ref{prop:5.43.Temp}. However, analyzing the kernel of $H_\lambda$ is the biggest challenge as it depends on the topology of the graph we are considering. Nonetheless, inspired by the work done in \cite{noja2020standing}, which treats the case $p=6$ in the tadpole graph, we are able to determine the kernel of $H_\lambda$ (for all but one frequency $\lambda^*$).

The argument of the proof of Proposition \ref{prop:5.26} is almost exactly the same for both graphs, except for the analysis of the zero eigenvalue of $H_\lambda$, see the proof of Lemma \ref{prop:5.43.Temp} below. We start with the results that are common to both graphs.

\smallbreak

We begin by writing explicitly the operator $H_\lambda$. Take $X=H^1(\mathcal{H}_1;\C)\simeq H^1(\mathcal{H}_1)\times H^1(\mathcal{H}_1)$ with the usual (real) inner product and, for any, $u\in X$,  write $u\simeq\left(\Re(u),\Im(u)\right)$, where $\Re$ and $\Im$ denote the real and imaginary parts of $u$, respectively. For each $\lambda>0$, let $u^\lambda\simeq(u^\lambda,0)$ be the positive action ground-state in $\mathcal{H}_1$. Then, for any $v,w\in H^1(\mathcal{H}_1;\C)$, $H_\lambda$ is given by:
\begin{align}\label{LinearizedOperatorExplicit}
	\langle H_\lambda v,w\rangle=\langle S_\lambda''(u_\lambda,\mathcal{H}_1)v,w\rangle&=\langle \mathcal{L}^{-}_\lambda v,w\rangle+\langle \mathcal{L}^{+}_\lambda v,w\rangle,
\end{align}
where the operators $\mathcal{L}^{-}_\lambda, \mathcal{L}^{+}_\lambda:X\to X'$ are defined by:
\begin{equation}\label{L-Operator}
	\langle \mathcal{L}^{-}_\lambda v,w\rangle=\int_{\mathcal{H}_1}\Re(v')\Re(w')+(\lambda-(p-1)|{u^\lambda}|^{p-2})\Re(v)\Re(w)dx,\ \text{for all}\ w\in X,
\end{equation}
and
\begin{equation}\label{L+Operator}
	\langle \mathcal{L}^{+}_\lambda v,w\rangle=\int_{\mathcal{H}_1}\Im(v')\Im(w')+(\lambda-|{u^\lambda}|^{p-2})\Im(v)\Im(w)dx,\ \text{for all}\ w\in X.
\end{equation}
It is classical to show that $H_\lambda:X\to X'$ is self-adjoint (i.e., by taking the canonical identification $I:X\to X'$ via Riesz' representation theorem, $I^{-1}H_\lambda$ is a bounded self-adjoint operator in the standard sense).

Under the previous identification, we define the \textit{spectrum} of the operator $H_\lambda$ by
\begin{equation}\label{Spectrum}
	\sigma(H_\lambda):=\left\{\lambda\in\C: I^{-1}H_\lambda-\lambda\ \text{is not invertible}\right\}.
\end{equation}
Since $H_\lambda$ is self-adjoint, $\sigma(H_\lambda)\subset\R$, and we split the spectrum into the disjoint union $\sigma(H_\lambda)=\sigma_{disc}(H_\lambda)\cup\sigma_{ess}(H_\lambda)$, where
\begin{equation}\label{DiscreteSpec}
	\sigma_{disc}(H_\lambda)=\left\{\nu\in\R: \nu\ \text{is an eigenvalue of}\ H_\lambda\ \text{with finite multiplicity}\right\}
\end{equation}
is the \textit{discrete spectrum} and 
\begin{equation}\label{EssentialSpec}
	\sigma_{ess}(H_\lambda)=\sigma(H_\lambda)\setminus\sigma_{disc}(H_\lambda)
\end{equation}
is the \textit{essential spectrum}. We analyze the spectrum of $H_\lambda$ by examining the spectra of $\mathcal{L}^{-}_\lambda$ and $\mathcal{L}^{+}_\lambda$ separately. We start with the essential spectrum. 

\begin{lemma}\label{prop:5.40.Temp}
	For any $\lambda>0$,  $\sigma_{\text{ess}}\left(\mathcal{L}^{-}_\lambda\right)=\sigma_{\text{ess}}\left(\mathcal{L}^{+}_\lambda\right)=\sigma_{\text{ess}}\left(-\frac{d^2}{dx^2}+\lambda\right)=[\lambda,\infty).$ In particular, $\sigma_{\text{ess}}\left(H_\lambda\right)=[\lambda,\infty)$.
\end{lemma}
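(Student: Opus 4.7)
The plan is to view $\mathcal{L}^\pm_\lambda$ as relatively compact perturbations of the free operator $T_\lambda:=-d^2/dx^2+\lambda$ on $\mathcal{H}_1$ (with the domain implementing continuity and Kirchhoff conditions at the vertex), and then invoke Weyl's theorem on the invariance of the essential spectrum. Concretely, using the decomposition $X\simeq H^1(\mathcal{H}_1;\mathbb{R})\times H^1(\mathcal{H}_1;\mathbb{R})$, one has
\[
\mathcal{L}^-_\lambda = T_\lambda - V^-,\qquad \mathcal{L}^+_\lambda = T_\lambda - V^+,
\]
where $V^-=(p-1)|u^\lambda|^{p-2}$ and $V^+=|u^\lambda|^{p-2}$ act as bounded multiplication operators. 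Thus the two analyses are completely parallel, and it suffices to treat a generic potential $V$ of this form.

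The first step is to verify that $V$ decays at infinity in $\mathcal{H}_1$. By Theorem \ref{thm:description_edge_by_edge}, on each half-line $u^\lambda$ coincides with a translate of the soliton $\varphi$, which decays exponentially; on the compact edge $V$ is bounded. From this, one concludes that $V$ is a relatively compact perturbation of $T_\lambda$: given a bounded sequence $(v_n)\subset D(T_\lambda)$ with $(T_\lambda v_n)$ bounded in $L^2(\mathcal{H}_1)$, the sequence is bounded in $H^2$ on each edge, hence (by local Rellich--Kondrachov on any compact truncation of $\mathcal{H}_1$) it converges strongly in $L^2$ on any compact subgraph. Combining with the smallness of $V$ outside a large compact set, a standard $\varepsilon/2$ argument shows that $Vv_n$ has a convergent subsequence in $L^2(\mathcal{H}_1)$.

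The second step is to compute $\sigma_{\text{ess}}(T_\lambda)$. Since $\mathcal{H}_1$ has at least one half-line, a Weyl sequence built from smooth compactly supported traveling waves $\chi(x/R)e^{ikx}$ (with $k\geq 0$) placed far along a half-line shows $[\lambda,\infty)\subset \sigma_{\text{ess}}(T_\lambda)$, while the inclusion $\sigma(T_\lambda)\subset[\lambda,\infty)$ follows from the fact that $T_\lambda-\lambda$ is nonnegative (its quadratic form is $\int|v'|^2$). Hence $\sigma_{\text{ess}}(T_\lambda)=[\lambda,\infty)$, and Weyl's theorem yields $\sigma_{\text{ess}}(\mathcal{L}^\pm_\lambda)=[\lambda,\infty)$.

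Finally, since $H_\lambda$ acts block-diagonally as $\mathcal{L}^-_\lambda\oplus \mathcal{L}^+_\lambda$ on the real/imaginary decomposition of $X$, one has $\sigma_{\text{ess}}(H_\lambda)=\sigma_{\text{ess}}(\mathcal{L}^-_\lambda)\cup\sigma_{\text{ess}}(\mathcal{L}^+_\lambda)=[\lambda,\infty)$. The main technical point to take care of is the justification of relative compactness on a non-compact metric graph, which is the natural place where one must use the exponential decay of $u^\lambda$ along each half-line; otherwise, all other ingredients are standard Weyl-theory arguments adapted to the graph setting.
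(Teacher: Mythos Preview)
Your argument is correct and follows essentially the same route as the paper: both show that the multiplication operator by $|u^\lambda|^{p-2}$ is a relatively compact perturbation of $-\tfrac{d^2}{dx^2}+\lambda$ (using Rellich--Kondrachov on the compact core and the exponential decay of the soliton on the half-lines), and then apply Weyl's theorem together with the identification of $\sigma_{\mathrm{ess}}(-\tfrac{d^2}{dx^2}+\lambda)=[\lambda,\infty)$ on the graph. The only cosmetic difference is that the paper phrases compactness at the quadratic-form level ($K:H^1\to (H^1)'$) whereas you use the operator formulation on $D(T_\lambda)$; both are standard and equivalent here.
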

\begin{proof}
	We claim that $\mathcal{L}^{-}_\lambda$ and $\mathcal{L}^{+}_\lambda$ are compact perturbations of the operator $-\frac{d^2}{dx^2}+\lambda$, in which case the first two equalities will follow from Weyl's Theorem, see \cite[Section XIII.4-Example 3]{reed1978iv}. Finally, we note that the last equality holds as an adaptation of \cite[Theorem 2.3.5]{hofmann2021}. To prove the claim, it is enough to show that the operator $K:H^1(\mathcal{H}_1)\to H^1(\mathcal{H}_1)'$ defined by
	$$\langle K(u),v\rangle=\int_{\mathcal{H}_1}|u^\lambda|^{p-2}uvdx,\ \text{for all}\ v\in H^1(\mathcal{H}_1),$$
	is compact. We note that through the identification of $L^2(\mathcal{H}_1)$ with its dual and the exponential decay of the real line soliton $\varphi$, the operator $K$ can be understood, on each edge, as $K:H^1(\mathcal{H}_1)\to L^2(\mathcal{H}_1)$ and defined as $K(u)=|u^\lambda|^{p-2}u$.
		
	 On the compact core of the graph, by Rellich-Kondrachov embedding theorem,  the operator $K$ is compact. We show that it is compact on the half-lines. Take  $(u_n)_{n\in\N}$ to be a bounded sequence in $H^1(\R^+)$ such that $u_n \rightharpoonup u^*$ weakly in $H^1(\R^+)$, and let $M>0$ be such that $\|u_n\|_{H^1(\R^+)},\|u\|_{H^1(\R^+)}\leq C$. Denote $g:=|u^\lambda|^{p-2}\in W^{1,\infty}(\R^+)$, and recall that $u^\lambda$ is a decreasing portion of the real line soliton (see Theorem \ref{thm:description_edge_by_edge}), which is known to have exponential decay as $|x|\to \infty$. Then
	 \begin{align*}
	 	&\int_0^\infty\left|{g u_n-g u^*}\right|^2dx\leq g (0)\int_0^r\left|{u_n-u^*}\right|^2dx+\int_r^\infty\left|{g(u_n-u^*)}\right|^2dx\leq g (0)\int_0^r\left|{u_n-u^*}\right|^2dx+g (r)2M.
	 \end{align*}
By taking the limit as $n\to \infty$, and then as $r\to \infty$, we deduce that $gu_n\to gu^*$ in $L^2(\R^+)$ as $n\to \infty$.
\end{proof}

We now analyze the discrete spectrum of $\mathcal{L}^{-}_\lambda$ and $\mathcal{L}^{+}_\lambda$. Observe that, due to the exponential decay of the action ground-state on half-lines (Theorem \ref{thm:description_edge_by_edge}), the discrete spectrum of the operators $\mathcal{L}^{-}_\lambda$ and $\mathcal{L}^{+}_\lambda$ consists of a finite number of isolated eigenvalues. This was shown in \cite{kairzhan2018nonlinear}, in the context of star graphs, and referred often throughout the literature for other classes of graphs. For instance, \cite{noja2020standing}, for tadpole graphs, and, more generally, in \cite{cacciapuoti2017ground}, for starlike graphs. To analyze the discrete spectrum in more detail, we focus on understanding the eigenvalue problem $H_\lambda\chi=\nu\chi$, where  $\chi=(\chi_1,\chi_2)\in X$. This problem is expressed as follows:
\begin{equation}\label{eigenvalueProbs}
	\begin{cases}
		\int_{\mathcal{H}_1}\chi_1'\Re(w')+\lambda\chi_1\Re(w)-(p-1)|{u^\lambda}|^{p-2}\chi_1\Re(w)dx=\nu\int_\G\chi_1\Re(w)dx,\\
		\int_{\mathcal{H}_1}\chi_2'\Im(w')+\lambda\chi_2\Im(w)-|{u^\lambda}|^{p-2}\chi_2\Im(w)dx=\nu\int_\G\chi_2\Im(w)dx,
	\end{cases}\ \text{for any}\ w\in X.
\end{equation}

Let us first focus on (real) solutions $\chi_2\in H^1(\mathcal{H}_1)$ of the second equation in \eqref{eigenvalueProbs}. 
\begin{lemma}\label{lemma:5.41.Temp}
	For any $\lambda>0$, $\sigma_{disc}\left(\mathcal{L}^{+}_\lambda\right)\subseteq[0,\infty)$ and $\text{Ker}\left(\mathcal{L}^{+}_\lambda\right)=\text{span}\left\{u^\lambda\right\}$.
\end{lemma}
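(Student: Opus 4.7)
I would view $\langle \mathcal{L}^+_\lambda v, w\rangle$ as the quadratic form of the real, self-adjoint Schrödinger operator
\[
L_+ := -\partial_{xx} + \lambda - (u^\lambda)^{p-2}
\]
on $H^1(\mathcal{H}_1;\R)$ with Neumann-Kirchhoff conditions at the vertex. Since $u^\lambda$ solves \eqref{StationaryNLS_Chapter5}, one has $L_+ u^\lambda = 0$, so $u^\lambda \in \text{Ker}(\mathcal{L}^+_\lambda)$, and in particular $0 \in \sigma_{disc}(\mathcal{L}^+_\lambda)$ since it lies strictly below the essential spectrum $[\lambda,\infty)$ identified in Lemma~\ref{prop:5.40.Temp}.

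To upgrade this to the full statement, the plan is to invoke a Perron-Frobenius principle for $L_+$: its bottom eigenvalue
\[
\mu_1 := \inf\left\{ \int_{\mathcal{H}_1}\!\!\left(|v'|^2 + (\lambda - (u^\lambda)^{p-2})v^2\right)dx : v\in H^1(\mathcal{H}_1;\R),\ \|v\|_{L^2}=1\right\}
\]
is attained by a strictly positive eigenfunction, and is simple. The infimum is attained as a discrete eigenvalue because the Rayleigh quotient of $u^\lambda$ is $0$, hence $\mu_1 \le 0 < \lambda = \inf \sigma_{ess}(L_+)$. Standard arguments then carry over to the metric-graph setting: given a minimizer $v$, the function $|v|$ still belongs to $H^1(\mathcal{H}_1;\R)$ (continuity at the vertex is preserved) with the same Rayleigh quotient, so one may assume $v\ge 0$; the strong maximum principle applied edge-by-edge, combined with the Kirchhoff condition at the vertex, then forces $v>0$ everywhere. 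Simplicity is classical: two $L^2$-orthogonal minimizers could each be chosen non-negative, contradicting strict positivity.

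Applying this to $u^\lambda>0$, which solves $L_+ u^\lambda=0$, I conclude $\mu_1=0$, that the kernel is one-dimensional and spanned by $u^\lambda$, and that every other discrete eigenvalue is strictly positive. This yields both $\text{Ker}(\mathcal{L}^+_\lambda)=\text{span}\{u^\lambda\}$ and $\sigma_{disc}(\mathcal{L}^+_\lambda)\subseteq [0,\infty)$. The only nontrivial point is justifying the Perron-Frobenius property on a metric graph, in particular the handling of the strong maximum principle across the Kirchhoff vertex; this is by now standard for Schrödinger operators on metric graphs and can be taken from the references already cited in the paper (see, e.g., \cite{kairzhan2018nonlinear,cacciapuoti2017ground,noja2020standing}).
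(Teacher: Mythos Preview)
Your proof is correct and follows essentially the same approach as the paper: both identify $u^\lambda>0$ as a null vector, invoke the variational characterization of the bottom eigenvalue to obtain a nonnegative first eigenfunction, and use simplicity of the ground eigenvalue to conclude. The only cosmetic difference is that the paper, instead of citing Perron--Frobenius, deduces $\mu_1=0$ by the pairing argument $0=\int_{\mathcal{H}_1}\bigl(-(u^\lambda)''+\lambda u^\lambda-(u^\lambda)^{p-1}\bigr)\chi_2\,dx=\mu_1\int_{\mathcal{H}_1} u^\lambda\chi_2\,dx$, which is just the standard trick showing a positive function can only be eigenfunction for the first eigenvalue.
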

\begin{proof}
	We start by observing that $0$ is indeed an eigenvalue of $H_\lambda$ (and also $\mathcal{L}^{+}_\lambda$), since $w=(0,u^\lambda)\simeq iu^\lambda$ solves \eqref{eigenvalueProbs} with $\nu=0$.
	
	Next, we show that the first eigenvalue $\nu_1$ has to be zero. By the variational characterization of the first eigenvalue we have that
	\begin{equation}\label{MinimizationAux}
		\nu_1=\inf_{\chi_2\in H^1(\mathcal{H}_1)}\left\{\int_{\mathcal{H}_1}{\chi_2'}^2+\left(\lambda-{|u^\lambda|}^{p-2}\right)\chi_2^2dx: \int_{\mathcal{H}_1}\chi_2^2dx=1\right\}.
	\end{equation}
and any associated eigenfunction $\chi_2$ is therefore non-negative. Testing \eqref{eigenvalueProbs} with $w=(0,u^\lambda)$ and integrating by parts, we find that, since $u^\lambda>0$ is a bound-state,
	\begin{equation*}
		0=\int_{\mathcal{H}_1}\left(-({u^\lambda})''+\lambda u^\lambda-{(u^\lambda)}^{p-1}\right)\chi_2dx=\nu_1\int_{\mathcal{H}_1} u^\lambda\chi_2dx.
	\end{equation*}
which implies that $\nu_1=0$.
	
	To finish the proof, observe that $\nu_1=0$, being the first eigenvalue of $\mathcal{L}^{+}_\lambda$, is a simple eigenvalue, and thus $\text{Ker}(\mathcal{L}^{+}_\lambda)=\text{span}\left\{u^\lambda\right\}$.
\end{proof}

To finish the spectral analysis of $H_\lambda$, we need to look carefully into the spectrum of  $\mathcal{L}^{-}_\lambda$. We will need the following concepts.

\begin{definition}[\textit{Morse and Nullity Index}]
	Suppose that $A:X\to X'$ is a linear operator. We define \setlist{nolistsep}
	\begin{itemize}[noitemsep]
		\item the \textit{Morse Index} of $A$, as
		$ n(A):=\#\left\{\nu\in\R: \nu\ \text{is a negative eigenvalue of}\ A\right\}.$
		\item the \textit{Nullity Index} of $A$, as the multiplicity of the zero eigenvalue of $A$, and denote it by $z(A)$. 
	\end{itemize}
\end{definition}

With the above definition,
\begin{lemma}\label{lemma:5.44.Temp}
	For each $\lambda>0$, $n(H_\lambda)=n(\mathcal{L}^{-}_\lambda)=1$.
\end{lemma}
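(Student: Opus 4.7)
The plan is to exploit the block-diagonal structure of $H_\lambda$ in the decomposition $X \simeq H^1(\mathcal{H}_1)\times H^1(\mathcal{H}_1)$. Indeed, from \eqref{L-Operator}--\eqref{L+Operator}, $\mathcal{L}^{-}_\lambda$ acts only on the real-part component and $\mathcal{L}^{+}_\lambda$ only on the imaginary-part component, so $n(H_\lambda)=n(\mathcal{L}^{-}_\lambda)+n(\mathcal{L}^{+}_\lambda)$. Since Lemma \ref{lemma:5.41.Temp} already yields $n(\mathcal{L}^{+}_\lambda)=0$, the whole statement reduces to proving $n(\mathcal{L}^{-}_\lambda)=1$.

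For the lower bound $n(\mathcal{L}^{-}_\lambda)\geq 1$, I would test the quadratic form against $u^\lambda$ itself. Using that $u^\lambda$ solves \eqref{StationaryNLS_Chapter5}, an integration by parts gives $\int_{\mathcal{H}_1}|(u^\lambda)'|^2+\lambda|u^\lambda|^2\,dx=\int_{\mathcal{H}_1}|u^\lambda|^p\,dx$, hence
\[
\langle \mathcal{L}^{-}_\lambda u^\lambda, u^\lambda\rangle = \int_{\mathcal{H}_1}|(u^\lambda)'|^2+\lambda |u^\lambda|^2-(p-1)|u^\lambda|^p\,dx=-(p-2)\|u^\lambda\|_{L^p(\mathcal{H}_1)}^p<0,
\]
which forces at least one negative eigenvalue of $\mathcal{L}^{-}_\lambda$.

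For the upper bound $n(\mathcal{L}^{-}_\lambda)\leq 1$, I would invoke the variational characterization of $u^\lambda$ as a constrained minimizer on the Nehari manifold
\[
\mathcal{N}=\bigl\{w\in H^1(\mathcal{H}_1;\mathbb{R})\setminus\{0\}\,:\,I(w):=\textstyle\int_{\mathcal{H}_1}|w'|^2+\lambda w^2-|w|^p\,dx=0\bigr\}.
\]
Every bound-state lies in $\mathcal{N}$, and the standard fibration argument (the map $t\mapsto S_\lambda(tw,\mathcal{H}_1)$ has a unique positive critical point for each $w\neq 0$) shows that the least action level in \eqref{eq:leastactionlevel} coincides with $\inf_{\mathcal{N}} S_\lambda$, attained exactly at $u^\lambda$. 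Since $\langle I'(u^\lambda),u^\lambda\rangle=-(p-2)\|u^\lambda\|_{L^p}^p\neq 0$, $\mathcal{N}$ is locally a smooth codimension-one manifold near $u^\lambda$, with tangent space $T_{u^\lambda}\mathcal{N}=\{\phi\in H^1(\mathcal{H}_1;\mathbb{R}): \langle I'(u^\lambda),\phi\rangle=0\}$. Being a constrained minimizer, $u^\lambda$ satisfies $\langle \mathcal{L}^{-}_\lambda\phi,\phi\rangle\geq 0$ for all $\phi\in T_{u^\lambda}\mathcal{N}$; together with the min-max principle on the codimension-one complement, this yields $n(\mathcal{L}^{-}_\lambda)\leq 1$.

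The step I expect to be the main (and really only) obstacle is the identification of $u^\lambda$ as a Nehari minimizer: the definition \eqref{eq:leastactionlevel} minimizes over bound-states, whereas the argument above needs the minimization to hold on the whole manifold $\mathcal{N}$. This equivalence is classical via the fibration trick, but it relies on the properties of the action ground-states established in \cite{AgostinhoCorreiaTavares,AgostinhoCorreiaTavares2}, in particular on the existence of a minimizer of $S_\lambda$ on $\mathcal{N}$ which, by the Lagrange multiplier rule, must be a bound-state and thus (by uniqueness) coincides with $u^\lambda$.
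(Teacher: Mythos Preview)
Your argument is correct and follows the same architecture as the paper: show $n(\mathcal{L}^{-}_\lambda)\ge 1$ by testing against $u^\lambda$, and $n(\mathcal{L}^{-}_\lambda)\le 1$ by recognizing $u^\lambda$ as a minimizer on a codimension-one constraint manifold, so that the second variation is nonnegative on the tangent space. The only difference is the choice of constraint: you use the Nehari manifold $\{I(w)=0\}$, whereas the paper uses the $L^p$-sphere $\{R(u)=\mu\}$ for the appropriate $\mu$, citing \cite{AgostinhoCorreiaTavares,AgostinhoCorreiaTavares2} directly for the fact that $u^\lambda$ minimizes $I_\lambda$ under fixed $L^p$-norm. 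This sidesteps precisely the step you flagged as the main obstacle (identifying the action ground-state with the Nehari minimizer): on the $L^p$-sphere the minimization property is already established in the references, and the constraint is automatically a smooth manifold since $R'(u)\neq 0$ for $u\neq 0$. Both routes are standard and equivalent; the paper's is slightly shorter only because the relevant minimization result is quotable off the shelf.
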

\begin{proof}
	Firstly, recall that $u_\lambda\simeq (u^\lambda,0)\in X$ is a (real) bound-state, so
	\begin{equation*}
		\langle H_\lambda u_\lambda,u_\lambda\rangle=\langle\mathcal{L}^{-}_\lambda u_\lambda,u_\lambda\rangle=\int_{\mathcal{H}_1}(-{u^\lambda}''+\lambda u^\lambda-(p-1){|u^\lambda|}^{p-1})u^\lambda dx=-(p-2)\|u^\lambda\|^p_{L^p(\mathcal{H}_1)}<0.
	\end{equation*}
	Consequently, $n(H_\lambda)\geq1$.
	
	To show the reverse inequality, observe that $u^\lambda$ is the (unique) solution of the minimization problem:
	\begin{equation*}
		\min\left\{I_\lambda(u,\mathcal{H}_1):=\frac{1}{2}\int_{\mathcal{H}_1}|u'|^2+\lambda |u|^2dx:\ R(u,\mathcal{H}_1):=\frac{1}{p}\int_{\mathcal{H}_1}|u|^pdx=\mu\right\}
	\end{equation*}
	for some suitable $\mu>0$ (for the $\mathcal{T}$-graph see \cite[Section 3.1]{AgostinhoCorreiaTavares}, for the tadpole see \cite[proof of Proposition 1.17]{AgostinhoCorreiaTavares}). Additionally, note that the action functional $S_\lambda$ can be written as $S_\lambda=I_\lambda-R$. Take the manifold of codimension one $R_\mu:=R^{-1}(\cdot,\mathcal{H}_1)\{\mu\}\subset X$, associated to the constraint $R(\cdot,\mathcal{H}_1)=\mu$.
	
Consider $v\in T_{u_\lambda}R_\mu$, the tangent space of $R_\mu$ at $u_\lambda$. Define a smooth curve $\gamma:[-1,1]\to R_\mu$ such that $\gamma(0)=u_\lambda$ and $\gamma'(0)=v$. Since $I_\lambda$ is of class $C^2$ in $X$ and $R$ is constant along $\gamma$, then $S_\lambda\circ \gamma=(I_\lambda-R)\circ \gamma$ is (twice) differentiable at $t=0$ and
	$$\frac{d^2}{dt^2}\left(S_\lambda\circ\gamma\right)\big|_{t=0}=\langle S_\lambda''(\Phi_\lambda)v,v\rangle=\langle I_\lambda''(\Phi_\lambda)v,v\rangle=\frac{d^2}{dt^2}\left(I_\lambda\circ\gamma\right)\big|_{t=0}=\langle I_\lambda''(\Phi_\lambda)v,v\rangle\geq0,$$
by minimality of $u^\lambda=(u^\lambda,0)$. Since $R_\mu$ has codimension $1$, we deduce that $n(H_\lambda)\leq 1$. Finally, from Lemma \ref{lemma:5.41.Temp}, we have that $n(H_\lambda)=n(\mathcal{L}^{-}_\lambda)$.
\end{proof}

\begin{lemma}\label{prop:5.43.Temp}
	Let $L$ be the length function defined in \eqref{eq:Length} (with  $\theta=2$ in the $\mathcal{T}$-graph $\mathcal{T}_\ell$, and $\theta=1/2$ in the tadpole graph $\mathcal{G}_\ell$). For each $\lambda>0$ with  $\lambda\neq\lambda^*:=(L_1(p,1))^2$, we have $z\left(\mathcal{L}^{-}_\lambda\right)=0$.
\end{lemma}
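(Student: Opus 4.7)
My plan is to reduce the kernel problem to matching ODE solutions at the single internal vertex of $\mathcal{H}_1$, exploiting the edge-by-edge description of $u^\lambda$ from Theorem \ref{thm:description_edge_by_edge}. After the scaling $\ell=\sqrt{\lambda}$, a kernel element $\chi$ satisfies $-\chi''+\chi-(p-1) u_\ell^{p-2}\chi = 0$ on each edge with continuity and Neumann--Kirchhoff at the vertex (and Neumann at the pendant tip for the $\mathcal{T}$-graph). On each half-line, $u_\ell(x)=\varphi(x+y)$ decays exponentially, so the only $H^1$-solutions of the linearized equation are multiples of $\varphi'(\,\cdot+y)$; I denote the half-line coefficient by $\alpha$. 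In the tadpole case, I decompose the loop contribution into parts symmetric and antisymmetric about the midpoint $x=\ell$. The antisymmetric part $\chi_L^-$ satisfies $\chi_L^-(0)=-\chi_L^-(2\ell)$ by antisymmetry, so continuity at the vertex forces $\chi_L^-(0)=0$, while antisymmetry gives $\chi_L^-(\ell)=0$. A Wronskian argument with $u_\kappa'$ (which satisfies $u_\kappa'(\ell)=0$ and $u_\kappa'(0)\neq0$) shows that these two linearized solutions are proportional, and the vanishing at $0$ then forces $\chi_L^-\equiv 0$.

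What remains is the pendant contribution (for the $\mathcal{T}$-graph) or the symmetric half-loop contribution (for the tadpole), both reducing to a problem on $[0,\ell]$ with Neumann at $x=\ell$. Using $\varphi'(y)=-\sqrt{2f(z)}$ and $\varphi''(y)=f'(z)$, the continuity and Kirchhoff conditions (with $\theta=2$ for the $\mathcal{T}$-graph and $\theta=1/2$ for the tadpole) determine
\[
(\chi_\kappa(0),\,\chi_\kappa'(0)) = c\left(1,\ \frac{\theta\, f'(z)}{\sqrt{2f(z)}}\right)
\]
for a single scalar $c\in\mathbb{R}$. These are exactly the initial conditions at $x=0$ of
\[
\zeta(x) := \partial_z\, u_\kappa^{(z)}(x),
\]
where $u_\kappa^{(z)}$ is the nonlinear family with $u_\kappa^{(z)}(0)=z$ and $(u_\kappa^{(z)})'(0)=\theta\sqrt{2f(z)}$ (so $u_\kappa^{(z)}$ coincides with the ground-state restriction to the compact edge at $z=z(p,\ell)$). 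Hence $\chi_\kappa=c\,\zeta$, and the Neumann condition $\chi_\kappa'(\ell)=0$ reduces the full kernel problem to the single scalar equation $\zeta'(\ell)=0$.

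The final step is to compute $\zeta'(\ell)$. Differentiating the defining identity $(u_\kappa^{(z)})'(L(p,z))\equiv 0$ with respect to $z$ yields
\[
f'(s)\,\partial_z L(p,z)+\zeta'(\ell)=0, \qquad s := u_\kappa(\ell) = f_2^{-1}\bigl((1-\theta^2)f(z)\bigr).
\]
By Lemma \ref{lemma:L'}, $\partial_z L(p,z)<0$ for $\theta\in(0,2]$, so $\zeta'(\ell)=0$ would force $f'(s)=0$, i.e., $s=1$. But $f(s)=-3f(z)<0<f(1)$ in the $\mathcal{T}$-graph and $f(s)=(3/4)f(z)<f(1)$ in the tadpole, so $s\neq 1$ in both settings; hence $\zeta'(\ell)\neq 0$ and the kernel is trivial. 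The main obstacle is the vertex-matching step: checking explicitly that continuity and Kirchhoff produce precisely the initial conditions of $\zeta$ requires some care with the signs and factors of $\theta$, but once this identification is in place the rest is a clean application of Lemma \ref{lemma:L'}. The excluded frequency $\lambda^*=L(p,1)^2$ corresponds to $z=1$, where $f'(z)=0$ degenerates $\zeta$'s initial velocity; the argument still goes through with $\zeta$ having data $(1,0)$, but the ratio $\chi_\kappa'(0)/\chi_\kappa(0)$ used in the identification becomes indeterminate, which I believe is the sole reason for excluding this single value.
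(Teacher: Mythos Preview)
Your proof is correct and takes a genuinely different, cleaner route than the paper's. The paper, on both graphs, writes the general linearized solution on the compact edge as $\beta u_\kappa' + \delta W$ with $W:=\partial_C w$, where $C=(1-\theta^2)f(z(p,\ell))$ is the energy level; it then computes $W(0),W'(0),W'(\ell)$ via the chain rule through the inverse $\ell(C)$ and checks that the resulting linear system in $(\alpha,\beta,\delta)$ has only the trivial solution (for the tadpole this is done on the full loop, ending in a $2\times2$ determinant). Your parametrisation by $z$ bypasses all of this: the vertex constraints already pin $(\chi_\kappa(0),\chi_\kappa'(0))$ to the line spanned by $(\zeta(0),\zeta'(0))=(1,\theta f'(z)/\sqrt{2f(z)})$, so the $u_\kappa'$-component never enters and the whole kernel question collapses to whether $\zeta'(\ell)=-f'(s)\,\partial_zL$ vanishes, which it cannot by Lemma~\ref{lemma:L'} and the elementary check $s\neq1$. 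Your even/odd splitting on the tadpole is also different from the paper's direct approach and is slightly more transparent.

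One remark worth making: the exclusion $\lambda\neq\lambda^*$ in the paper comes precisely from the non-invertibility of $\ell\mapsto C(\ell)$ at $z=1$, which makes $W=\partial_C w$ ill-defined there. Your $\zeta=\partial_z u_\kappa^{(z)}$ has no such singularity, and in fact your argument goes through at $z=1$ unchanged: the ratio $\chi_\kappa'(0)/\chi_\kappa(0)$ is not indeterminate but simply $0$ (since $f'(1)=0$ while $\chi_\kappa(0)=-\alpha\sqrt{2f(1)}\neq0$ for $\alpha\neq0$), the identification $\chi_\kappa=c\zeta$ still holds, and $\zeta'(\ell)=-f'(s)\,\partial_zL(p,1)\neq0$ because $s=f_2^{-1}((1-\theta^2)f(1))>1$ and $\partial_zL(p,1)<0$. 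So your method actually removes the restriction $\lambda\neq\lambda^*$ that the paper has to carry; your hesitation at the end is unnecessary.
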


Assuming Lemma \ref{prop:5.43.Temp} is true, we show that assumption $(A_3)$ holds.
\begin{proof}[Proof of Proposition \ref{prop:5.26}]
	By Lemmas \ref{lemma:5.41.Temp}, \ref{lemma:5.44.Temp} and \ref{prop:5.43.Temp}, we have that $H_\lambda$ has exactly one negative eigenvalue, and its kernel is generated by $iu^\lambda$. Recalling that eigenvalues are in finite number, it follows from Lemma \ref{prop:5.40.Temp} that the remaining part of the spectrum is bounded away from zero.
\end{proof}

We now focus on proving Lemma \ref{prop:5.43.Temp}, following the arguments developed in \cite{noja2020standing} for the tadpole graph and $p=6$. We analyze the zero eigenvalue of $\mathcal{L}^{-}_\lambda$. 

\begin{remark}\label{rem:NK} Observe that, whenever $u\in H^1(\G)$ is a zero-eigenvector for $\mathcal{L}^-_\lambda$, then $u\in H^2_{\text{NK}}(\G)$, i.e., the restriction of $u$ to each edge, $u_e$, belongs to $H^2$ in each edge, and $u$ satisfies the Neumann-Kirchoff conditions at each vertex:
$$
\sum_{e \prec v} \frac{d u_e}{d x_e}(v)=0
$$
where $e \prec \mathbf{v}$ means that the edge $e$ is incident to the vertex $\mathbf{v}$, and the derivatives are taken in the inward direction of each edge.  
\end{remark}

Since the topology of the $\mathcal{T}$ and the tadpole graphs are different, we will need to split the proof of Lemma \ref{prop:5.43.Temp} in two parts, one for each graph.
Before that, we present an auxiliary result concerning the half-line.

\begin{lemma}\label{lemma:Half-lineKernel}
Take $y\geq 0$, and let $\varphi$ be the even real line soliton. Any non-trivial solution in $H^1$ to the equation
	\begin{equation}\label{eqn.LinearizedHalf-line}
		-v''+v-(p-1)\varphi(x+y)^{p-2}v=0,\ \text{in}\ \R^+,
	\end{equation}
	is of the form $v(x)=\alpha\varphi'(x+y)$, for some  $\alpha\in\R$. 
\end{lemma}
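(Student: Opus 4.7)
My plan is to combine the explicit structure of the ODE with a Wronskian argument. The first step is to exhibit $w_1(x):=\varphi'(x+y)$ as an $H^1$-solution. Differentiating the soliton identity $-\varphi''+\varphi=\varphi^{p-1}$ on $\R$ yields $-\varphi'''+\varphi'=(p-1)\varphi^{p-2}\varphi'$, which, evaluated at $x+y$, shows that $w_1$ satisfies \eqref{eqn.LinearizedHalf-line}. The explicit formula $\varphi(x)=(p/2)^{1/(p-2)}\sech^{2/(p-2)}((p-2)x/2)$ guarantees that both $\varphi$ and $\varphi'$ decay exponentially, so $w_1\in H^1(\R^+)$.

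Since \eqref{eqn.LinearizedHalf-line} is a second-order linear ODE on $[0,\infty)$, its solution space is two-dimensional, so it suffices to show that the $H^1(\R^+)$-subspace is at most one-dimensional. To this end I would use a Wronskian argument. Let $v\in H^1(\R^+)$ be any solution. Rewriting the equation as $v''=(1-(p-1)\varphi(x+y)^{p-2})v$, the right-hand side belongs to $L^2(\R^+)$ since the potential is bounded and $v\in L^2$; hence $v\in H^2(\R^+)\subset C^1([0,\infty))$, and standard Sobolev theory on $\R^+$ gives $v(x),v'(x)\to 0$ as $x\to+\infty$. The Wronskian $W(x):=v(x)w_1'(x)-v'(x)w_1(x)$ is constant, since the equation contains no first-order term; since $w_1,w_1'$ decay exponentially while $v,v'\to 0$, passing to the limit $x\to+\infty$ forces $W\equiv 0$.

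It remains to deduce $v=\alpha w_1$ from $W\equiv 0$. On any interval where $w_1\neq 0$ one has $(v/w_1)'=-W/w_1^2=0$, so $v/w_1$ is constant there. For $y>0$, $w_1(x)=\varphi'(x+y)<0$ on all of $[0,\infty)$ (since $\varphi$ is strictly decreasing on $(0,\infty)$), and we immediately obtain $v\equiv \alpha w_1$. For $y=0$, $w_1(0)=\varphi'(0)=0$ but $w_1\neq 0$ on $(0,\infty)$, so the identity $v\equiv \alpha w_1$ holds on $(0,\infty)$ and extends to $x=0$ by continuity (both sides vanish there).

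\textbf{Main obstacle.} The argument is essentially standard for second-order linear equations with exponentially decaying potentials, and I do not anticipate a genuine difficulty. The only minor subtlety is the degenerate case $y=0$, where $w_1$ vanishes at the endpoint; this is handled by running the quotient argument on $(0,\infty)$ and extending by continuity. A fully equivalent route would use reduction of order, $w_2(x)=w_1(x)\int_{x_0}^{x}w_1(s)^{-2}\,ds$ on an interval where $w_1$ has no zeros; since $w_1(s)\sim c\, e^{-s}$ at infinity, the integral grows like $e^{2x}/2$, so $w_2(x)\sim \tilde c\, e^{x}\notin L^2$, forcing any $H^1$-solution to be proportional to $w_1$.
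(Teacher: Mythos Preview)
Your proof is correct and follows essentially the same Wronskian route as the paper. The only organizational difference is that the paper fixes a specific second solution (with $v(0)=1$, $v'(0)=0$) and uses the constant Wronskian to show it grows exponentially via reduction of order, whereas you argue directly that any $H^1$ solution must have vanishing Wronskian with $\varphi'(\cdot+y)$; the reduction-of-order alternative you sketch at the end is precisely the paper's argument.
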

\begin{proof}
	We follow \cite[Proposition 2.8b)]{weinstein1985modulational}. Without loss of generality, we assume $y=0$, being the proof in the general case completely analogous. 
	
	First of all, differentiating the equation $-\varphi''+\varphi=\varphi^p$, we get that $\varphi'$ is a solution of \eqref{eqn.LinearizedHalf-line}. Let $v$ be a solution to the ODE \eqref{eqn.LinearizedHalf-line}, with $v(0)=1$ and $v'(0)=0$;. 
    In particular, any solution to \eqref{eqn.LinearizedHalf-line} is of the form
	\[
	\alpha\varphi'+\beta v,\qquad \text{ for some } \alpha,\beta\in \R.
	\]
	We show that $v\notin H^1$, which concludes our proof. Consider the Wronskian of the two solutions, 
	$$ 
W(\varphi',v)(x)=\text{det}\begin{pmatrix}
		\varphi'(x) & v(x) \\ \varphi''(x) & v'(x)
	\end{pmatrix}=\varphi'(x)v'(x)-v(x)\varphi''(x). $$
	Since both $v$ and $\varphi'$ are solutions of \eqref{eqn.LinearizedHalf-line}, we get
	$$ \frac{d}{dx} W(\varphi',v)(x)=\frac{d}{dx}\left(\varphi'(x)v'(x)-v(x)\varphi''(x)\right)=\varphi'(x)v''(x)-v(x)\varphi'''(x)=0,$$
so $W(\varphi',v)(x)\equiv-\varphi''(0)$.
	
	Observe now that, for $x>0$, we have
	$$ \left(\frac{v}{\varphi'}\right)'(x)=\frac{W(\varphi',v)(x)}{{\varphi'}^2(x)}=\frac{-\varphi''(0)}{{\varphi'}^2(x)}.$$
Fixing $x_0$ such that $\varphi(x_0)=1$, we have $\varphi''>0$ for $x\geq x_0$. Then
	\begin{equation*}
		\frac{v(x)}{\varphi'(x)}-\frac{v(x_0)}{\varphi'(x_0)}=-\varphi''(0)\int_{x_0}^x\frac{dt}{{\varphi'}^2(t)} \quad \mbox{that is, }\quad v(x)=v(x_0)\frac{\varphi'(x)}{\varphi'(x_0)}-\varphi''(0)\varphi'(x)\int_{x_0}^x\frac{dt}{{\varphi'}^2(t)},
	\end{equation*}
As $x\to\infty$, since $\varphi(x)=\left(\frac{p}{2}\right)^\frac{1}{p-2}\sech^{\frac{2}{p-2}}\left(\frac{p-2}{2}x\right),$ we find that $v(x)\to \infty$ and thus $v\not\in H^1$.
\end{proof}

Next, we study the Nullity Index in $\mathcal{T}_1$.

\begin{proof}[Proof of Lemma \ref{prop:5.43.Temp} for the $\mathcal{T}$-graph $\mathcal{T}_1$.]
	We consider a solution $\Gamma\in H^1(\mathcal{T}_1)$ to the eigenvalue problem $\mathcal{L}^{-}_\lambda\Gamma=0$, and show that $\Gamma=0$.  Denote by $\Gamma_1,\Gamma_2$ the restriction of $\Gamma$ to each half-line, and $\Gamma_3$ the restriction of $\Gamma$ to the pendant. Observe that, by Remark \ref{rem:NK}, $\Gamma\in H^2_{\text{NK}}(\mathcal{T}_1)$.	

    \medskip
	     \noindent\textit{Step 1. Discussion of the kernel of $\mathcal{L}^{-}_\lambda$ in terms of the parameter $\ell$.}
	
	Recall the scaling
	$u^\lambda(x)=\lambda^{\frac{1}{p-2}}u_\ell(\lambda^{1/2}x)$ and define, for $\ell=\lambda^\frac{1}{2}$, $\gamma=(\gamma_1,\gamma_2,\gamma_3)\in H^2_{\text{NK}}(\mathcal{T}_\ell)$ such that
	\begin{equation*}
		\begin{cases}
			\Gamma_i(x)=\gamma_i(\lambda^\frac{1}{2} x),\ x\in[0,\infty),\ i=1,2,\\
			\Gamma_3(x)=\gamma_3(\lambda^\frac{1}{2} x),\ x\in[0,\ell].
		\end{cases}
	\end{equation*}
	Then, the  problem $\mathcal{L}^{-}_\lambda\Gamma=0$ can be written equivalently in terms of $\gamma$ as
	\begin{equation}\label{eqn.4.eigenvalueprobTGRAPH}
		\begin{cases}
			-\gamma_i''+\gamma_i-(p-1)u_{\ell,i}^{p-2}\gamma_i=0,\ \text{in}\ [0,\infty),\ i=1,2;\\
			-\gamma_3''+\gamma_3-(p-1)u_{\ell,3}^{p-2}\gamma_3=0,\ \text{in}\ [0,\ell],
		\end{cases}\text{with}\ \ \begin{cases}
			\gamma_1(0)=\gamma_2(0)=\gamma_3(0),\\
			\gamma_1'(0)+\gamma_2'(0)+\gamma_3'(0)=0,\\
			\gamma_3'(\ell)=0.
		\end{cases}
	\end{equation}

    \medskip
	\noindent\textit{Step 2. Analysis of the boundary value problem \eqref{eqn.4.eigenvalueprobTGRAPH}.}

	We start by analyzing the half-lines. Recall from Theorem \ref{thm:description_edge_by_edge} that, for each $i=1,2$, we have $u_{\ell,i}(x)=\varphi(x+y(p,\ell))$, for some $y(p,\ell)>0$ uniquely defined in terms of $\ell$ by the equation
	$\varphi(y(p,\ell))=L^{-1}(p,\ell)$, where $L$ is the length function with $\theta=2$, and $\varphi$ is the real line soliton.

	Let $i=1,2$. By Lemma \ref{lemma:Half-lineKernel}, the function  $\gamma_i(x)=\alpha_i\varphi'(x+y(p,\ell))$ is the only non-trivial $H^1$ solution to the first equation in \eqref{eqn.4.eigenvalueprobTGRAPH} for some $\alpha_i\in\R$. The continuity condition $\gamma_1(0)=\gamma_2(0)$ then yields, for some $\alpha \in \mathbb{R}$,
	\begin{equation}\label{eqn.gamma1TGRAPH}
		\gamma_i(x)=\alpha\varphi'(x+y(p,\ell)),\ \text{for}\  i=1,2.
	\end{equation}
	
	We focus now on the terminal edge. Similarly, since $u_\ell$ is a bound-state, $\gamma_3=u'_{\ell,3}$ solves the second equation in \eqref{eqn.4.eigenvalueprobTGRAPH}. Since we have a linear second-order ODE, any solution to the second equation in \eqref{eqn.4.eigenvalueprobTGRAPH} takes the form
	\begin{equation}\label{eqn.gamma2TGRAPH}
		\gamma_3(x)=\beta u'_{\ell,3}(x)+\delta W(x),
	\end{equation}
	where $\beta,\delta\in \mathbb{R}$ are arbitrary and $W$ is a linear independent solution from $u'_{\ell,3}$ to be chosen later.
	
	Since $u_\ell$ satisfies the Neumann-Kirchoff conditions, by equations \eqref{eqn.gamma1TGRAPH} and \eqref{eqn.gamma2TGRAPH}, the boundary conditions in \eqref{eqn.4.eigenvalueprobTGRAPH} become
	\begin{equation*}
		\begin{cases}
			\beta u''_{3,\ell}(\ell)+\delta W'(\ell)=0,\\
			\alpha\varphi'(y(p,\ell))=-2\beta\varphi'(y(p,\ell))+\delta W(0),\\
			2\alpha\varphi''(y(p,\ell))+\beta u''_{\ell,3}(0)+\delta W'(0)=0.
		\end{cases}
	\end{equation*}
	Recalling that, for $f:\R^+\to\R$ given by $f(x)=\frac{1}{2}x^2-\frac{1}{p}x^p$, we have $u''_{\ell,3}=f'(u_{\ell,3})$ and $\varphi''=f'(\varphi)$. The previous system can be reduced to
	\begin{equation}\label{NewBoundaryConds}
		\begin{cases}
			\beta f'(u_{\ell,3}(\ell))+\delta W'(\ell)=0,\\
			(\alpha+2\beta)\varphi'(y(p,\ell))=\delta W(0),\\
			(2\alpha+\beta)f'(\varphi(y(p,\ell)))+\delta W'(0)=0.
		\end{cases}
	\end{equation}
	
    \medskip
	\noindent\textit{Step 3. Study of $C(\ell)$.}
	
	Recall that, by \eqref{eq:hamilt} with $\theta=2$, $u_{\ell,3}$ satisfies, for every $x\in[0,\ell]$,
	\begin{equation}\label{eqn.ODEDependCTGRAPH}
		-\frac{1}{2}{u_{\ell,3}}'(x)^2+f({u_{\ell,3}}(x))=C(\ell):=-3f\left(\varphi(y(p,\ell))\right).
	\end{equation}
By Lemma \ref{cor:4.2.Temp}, the map $C:\R^+\to (-3f(1),0)$ is of class $C^1$, and $C'(\ell)=0$ if and only if $f'(\varphi(y(p,\ell)))=0$, i.e., if $\varphi(y(p,\ell))=1$. Taking then the unique $\ell^*>0$ such that $\varphi(y(p,\ell^*))=L^{-1}(p,\ell^*)=1$, the function $C$ is invertible in the intervals $(0,\ell^*)$ and $(\ell^*,\infty)$. 

Observe that, by Lemma \ref{cor:4.2.Temp}, $\frac{\partial y}{\partial\ell}(p,\ell)>0$. Since $\varphi$ is decreasing, we have, for every $\ell<\ell^*$, $\varphi(y(p,\ell))>1$. Conversely, if $\ell>\ell^*$, $\varphi(y(p,\ell))<1$. Thus, the inverses of $C$ are the functions $\ell_1:(-3f(1),0)\to(\ell^*,\infty)$ and $\ell_2:(-3f(1),0)\to(0,\ell^*)$ defined by
	$$ \ell_1(C)=y^{-1}\left(p,\varphi^{-1}\left(f_{1}^{-1}\left(-\frac{1}{3}C\right)\right)\right)\ \text{and}\ \ell_2(C)=y^{-1}\left(p,\varphi^{-1}\left(f_{2}^{-1}\left(-\frac{1}{3}C\right)\right)\right).$$
Since, for $i=1,2$, $$\lim_{C\to-3f(1)^+}\ell_i(C)=y^{-1}(p,x_0)=\ell^*,$$ the functions $\ell_i$ can be both continuously extended to the interval $[-3f(1),0)$.\footnote{However, since $C'(\ell^*)=0$, 
	$$\lim_{C\to-3f(1)^+}\left|\frac{\partial \ell_i}{\partial C}(C)\right|=+\infty,\quad i=1,2.$$
	Consequently, the derivative cannot be continuously extended to the interval $[-3f(1),0)$.}

\noindent	\textit{Step 4. Choice of $W$.}

	Fix now $i=1$ or $i=2$  and consider $\ell=\ell_i(C)$. To ease notation, we shall drop the subscript $i$.
Define $w(\cdot;C)$ as the unique (positive and monotone) solution to
	\begin{equation}\label{eqn.boundary_conditionsTGRAPH}
		\begin{cases}
			-{w''}+{w}={w}^{p-1},\ \text{in}\ [0,\ell],\\
			{w}(0;C)=\varphi(y(p,\ell)),\\
			{w}'(0;C)=-2\varphi'(y(p,\ell))=2 \sqrt{2 f(z(p,\ell))},\\
			{w}'(\ell,C)=0.
		\end{cases}
	\end{equation}
	By the previous step, this IVP has $C^1$ dependence on initial conditions for $C\neq-3f(1)=:C^*$ and continuous dependence on initial conditions in the case $C=C^*$. Observe also that $w(x;C)$  is related with $u_{\ell,3}$ by
	$$w(\cdot;C)=u_{\ell_i(C),3},\ i=1,2,$$
	where $i=1$ if $\ell>\ell^*$ and $i=2$ if $\ell<\ell^
	*$.
	
	For any $C\neq C^*$ (equivalently, $\ell\neq\ell^*$), define, for $x\in[0,\ell]$, $$W(x):=\frac{\partial w}{\partial C}(x;C).$$
By differentiation with respect to $C$ of the equation in \eqref{eqn.boundary_conditionsTGRAPH}, we obtain that $W$ is a solution of the second ODE in \eqref{eqn.4.eigenvalueprobTGRAPH}. We also observe that $w$ solves 
	\begin{equation}\label{ODEinC}
		-\frac{1}{2}w'(x;C)^2+f(w(x;C))=C,\ \text{for}\ x\in[0,\ell].
	\end{equation}
	Consequently, differentiation with respect to $C$ yields the following relation between $w$ and $W$:
	\begin{equation}\label{eqn.ConservAux}
		1=-w'(x;C)W'(x)+f'(w(x;C))W(x).
	\end{equation}
	
	We now check that $W$ is linearly independent from $u'_{\ell_i(C),3}$. Assume, by contradiction that there exists $k\neq0$ such that $W(x)=ku'_{\ell_i(C),3}(x)$ for any $x\in[0,\ell]$. 
	If $x=\ell$, we know that $w'(\ell;C)=u'_{\ell_i(C),3}(\ell)=0$, and thus, $W(\ell)=0$. Now, plugging $x=\ell$ into \eqref{eqn.ConservAux} gives us a contradiction.

    \medskip
\noindent	\textit{Step 5. $W(0)\neq0$.}

Differentiating with respect to $C$ the second equality in \eqref{eqn.boundary_conditionsTGRAPH}, we obtain, by Lemma \ref{cor:4.2.Temp} and the fact that $\ell\neq \ell^*$,
	\begin{equation}\label{eqn.W(0)AUXTGRAPH}
		W(0)=\varphi'(y(p,\ell(C)))\frac{\partial}{\partial\ell}y(p,\ell(C))\ell'(C)\neq0.
	\end{equation}
	
    \medskip
	\noindent\textit{Step 6.} $W'(0)\neq0$.

	Reasoning as in the previous step, we have, differentiating with respect to $C$ the third equality in \eqref{eqn.boundary_conditionsTGRAPH}, and since $\ell\neq \ell^*$,
	\begin{equation}\label{eqnStep4Aux2}
		W'(0)=-2f'(\varphi(y(p,\ell(C))))\frac{\partial}{\partial\ell}y(p,\ell(C))\ell'(C)\neq0.
	\end{equation}
	
    \medskip
	\noindent\textit{Step 7.} $W'(\ell)\neq0$. 
    
    Differentiating with respect to $C$ the last equation in \eqref{eqn.boundary_conditionsTGRAPH} yields
	$$0=w''(\ell(C);C)\ell'(C)+W'(\ell)=f'(w(\ell(C);C))\ell'(C)+W'(\ell).$$
	If $W'(\ell)=0$, we would obtain $f'(w(\ell(C);C))=0$, so (since $w$ is positive) $w(\ell(C);C)=1$. Moreover, given that $w'(\ell(C);C)=0$, by existence and uniqueness theory for ODE's, we have $w(x;C)=u_{\ell,3}(x)\equiv1$ in $[0,\ell]$, which is a contradiction, since the action ground-state $u_\ell$ is not constant on the terminal edge.

    \medskip
	\noindent\textit{Step 8. Conclusion.}
	
	On the one hand, from the second and third equations in \eqref{NewBoundaryConds}, we obtain
	$$\frac{W(0)}{W'(0)}=\frac{(\alpha+2\beta)\varphi'(y(p,\ell))}{-(2\alpha+\beta)f'(\varphi(y(p,\ell)))}.$$
	On the other hand, from equations \eqref{eqn.W(0)AUXTGRAPH} and \eqref{eqnStep4Aux2}, given that $\ell'(C)\neq0$ and $\frac{\partial y}{\partial\ell}(p,\ell)\neq0$, we have
	$$\frac{W(0)}{W'(0)}=\frac{\varphi'(y(p,\ell))}{-2f'(\varphi(y(p,\ell)))}.$$
	Equating both expressions for $\frac{W(0)}{W'(0)}$, we get
	$$\frac{\varphi'(y(p,\ell))}{-2f'(\varphi(y(p,\ell)))}=\frac{(\alpha+2\beta)\varphi'(y(p,\ell))}{-(2\alpha+\beta)f'(\varphi(y(p,\ell)))},$$
	which simplifies to $$\alpha+2\beta=\alpha+\frac{1}{2}\beta\Longleftrightarrow \beta=0.$$
	Thus, the system \eqref{NewBoundaryConds} becomes
	\begin{equation*}
		\begin{cases}
			\delta W'(\ell)=0,\\
			\alpha\varphi(y(p,\ell))=\delta W(0),\\
			2\alpha f'(\varphi(y(p,\ell)))=-\delta W'(0).
		\end{cases}\Longleftrightarrow
		\begin{cases}
			\delta =0,\\
			\alpha\varphi(y(p,\ell))=0,\\
			2\alpha f'(\varphi(y(p,\ell)))=0.
		\end{cases}\Longleftrightarrow\begin{cases}\delta=0,\\ \alpha=0.			\end{cases}
	\end{equation*}
	In conclusion, $\gamma=0=\Gamma$, and no nontrivial solution to the equation $\mathcal{L}^{-}_\lambda\Gamma=0$ exists in $H^2_{\text{NK}}(\mathcal{T}_1)$.
\end{proof}

Finally, we study the Nullity Index in $\mathcal{G}_1$. We follow the same structure as before, highlighting mostly the differences.

\begin{proof}[Proof of Lemma \ref{prop:5.43.Temp} for the tadpole graph $\mathcal{G}_1$.]
	Let $\G_1$ be the tadpole graph. We consider a solution $\Gamma\in H^1$ to the eigenvalue problem $\mathcal{L}^{-}_\lambda\Gamma=0$ and show that $\Gamma=0$. We denote by $\Gamma_1$ and $\Gamma_2$, respectively, the restrictions of $\Gamma$ to the half-line and to the loop. Observe that, by Remark \ref{rem:NK}, $\Gamma\in H^2_{\text{NK}}(\G_1)$.	
	
    \medskip
	\noindent\textit{Step 1. Discussion of the kernel of $\mathcal{L}^{-}_\lambda$ in terms of the parameter $\ell$.}
	
	Recall the scaling
	$u^\lambda(x)=\lambda^{\frac{1}{p-2}}u_\ell(\lambda^{1/2}x)$ and define, for $\ell=\ell(\lambda)=\lambda^\frac{1}{2}$, $\gamma=(\gamma_1,\gamma_2)\in H^2_{\text{NK}}(\G_\ell)$
	\begin{equation*}
		\begin{cases}
			\Gamma_1(x)=\gamma_1(\lambda^\frac{1}{2} x),\ x\in[0,\infty),\\
			\Gamma_2(x)=\gamma_1(\lambda^\frac{1}{2} x),\ x\in[0,2\ell].
		\end{cases}
	\end{equation*}
	Then, problem $\mathcal{L}\Gamma=0$ can be written equivalently in $(\gamma_1,\gamma_2)$ as
	\begin{equation}\label{eqn.4.eigenvalueprob}
		\begin{cases}
			-\gamma_1''+\gamma_1-(p-1)u_{\ell,1}^{p-2}\gamma_1=0,\ \text{in}\ [0,\infty),\\
			-\gamma_2''+\gamma_2-(p-1)u_{\ell,2}^{p-2}\gamma_2=0,\ \text{in}\ [0,2\ell],
		\end{cases}\text{with}\ \begin{cases}
			\gamma_1(0)=\gamma_2(0)=\gamma_2(2\ell),\\
			\gamma_1'(0)+\gamma_2'(0)-\gamma_2'(2\ell)=0.
		\end{cases}
	\end{equation}
	
    \medskip
\noindent	\textit{Step 2. Analysis of the boundary value problem \eqref{eqn.4.eigenvalueprob}.}
		
We have $u_{\ell,1}(x)=\varphi(x+y(p,\ell))$, with $y(p,\ell)>0$ uniquely defined in terms of $\ell$ by the equation
	$\varphi(y(p,\ell))=L^{-1}(p,\ell)$, where $L$ is the length function, this time with $\theta=\frac{1}{2}$. By Lemma \ref{lemma:Half-lineKernel}, for some $\alpha\in\R$,
	\begin{equation}\label{eqn.gamma1}
		\gamma_1(x)=\alpha\varphi'(x+y(p,\ell)).
	\end{equation}
Any general solution to the second equation in \eqref{eqn.4.eigenvalueprob} takes the form
	\begin{equation}\label{eqn.gamma2}
		\gamma_2(x)=\beta u'_{\ell,2}(x)+\delta W(x),
	\end{equation}
	where $\beta,\delta\in \R$ are arbitrary and $W$ is a linear independent solution from $u'_{\ell,2}$ to be chosen later.
	
    \medskip
	\noindent\textit{Step 3. Analysis of $C(\ell)$.}
	
	This is a simple adaptation of Step 3 of the proof in the $\mathcal{T}$-graph. Note that in $\G_\ell$, by \eqref{eq:hamilt} with $\theta=\frac{1}{2}$, $u_{\ell,2}$ satisfies, for every $x\in[0,2\ell]$,
	\begin{equation}\label{eqn.ODEDependC}
		-\frac{1}{2}{u_{\ell,2}}'(x)^2+f({u_{\ell,2}}(x))=C(\ell)=\frac{3}{4}f\left(\varphi(y(p,\ell))\right).
	\end{equation}
	Moreover, there exists a unique $\ell^*>0$ such that $C'(\ell^*)=0$, characterized by $\varphi(y(p,\ell^*))=L^{-1}(\ell^*)=1$. The function $C:\R^+\to\left(0,\frac{3}{4}f(1)\right)$ can be inverted for $\ell\neq\ell^*$. The inverses of $C$ are the functions $\ell_1:(0,\frac{3}{4}f(1))\to(\ell^*,\infty)$ and $\ell_2:(0,\frac{3}{4}f(1))\to(0,\ell^*)$ defined by
	$$ \ell_1(C)=y^{-1}\left(p,\varphi^{-1}\left(f_{1}^{-1}\left(\frac{4}{3}C\right)\right)\right)\ \text{and}\ \ell_2(C)=y^{-1}\left(p,\varphi^{-1}\left(f_{2}^{-1}\left(\frac{4}{3}C\right)\right)\right).$$

    \medskip
	\noindent\textit{Step 4. Choice of $W$.}
	
	Fix now $i=1$ or $i=2$ and consider $\ell=\ell_i(C)$ as given in the previous step. To ease notation we shall drop the subscript $i$. Let $w(\ \cdot\ ;C)$ be the (unique) positive and symmetric (with respect to $x=\ell$) solution to
	\begin{equation}\label{eqn.boundary_conditions}
		\begin{cases}
			-w''+w=w^{p-1},\ \text{in}\ [0,2\ell],\\
			w(0;C)=w(2\ell;C)=\varphi(y(p,\ell)),\\
			w'(0;C)=-\frac{1}{2}\varphi'(y(p,\ell))=\frac{1}{2}\sqrt{2f(z(p,\ell))},\\
		\end{cases}
	\end{equation}
	By the previous step, this IVP has $C^1$ dependence on initial conditions for $C\neq\frac{3}{4}f(1)=:C^*$ and continuous dependence on initial conditions in the case $C=C^*$. Observe also that the function $w(x;C)$ is related with $u_{\ell,2}$ by
	$$w(\cdot;C)=u_{\ell_i(C),2},\ i=1,2,$$
	where $i=1$ if $\ell>\ell^*$ and $i=2$ if $\ell<\ell^
	*$.
We define, for $x\in[0,\ell]$, $$W(x):=\frac{\partial w}{\partial C}(x;C).$$ Differentiating with respect to $C$, we obtain that $W$ is a solution to the second ODE in \eqref{eqn.4.eigenvalueprob} that is linearly independent from $u'_{\ell,2}$.

    \medskip
\noindent	\textit{Step 5. Reduction to the unique solvability of a linear $2\times 2$ system.}
	
	Observe that, by symmetry of $u_{\ell,2}$ (recall Theorem \ref{thm:uniqueness}),  $u_{\ell,2}(x)=u_{\ell,2}(2\ell-x)$, for any $x\in [0,2\ell]$. Thus, in the same interval, we have $u'_{\ell,2}(x)=-u'_{\ell,2}(2\ell-x)$.

	From the continuity condition in \eqref{eqn.4.eigenvalueprob}, equations \eqref{eqn.gamma1} and \eqref{eqn.gamma2}, and the symmetry properties of $u'_{\ell,2}$ and $W$, 
	\begin{equation}\label{Cont-EigenProb}
		\begin{cases}
			\gamma_2(2\ell)=\gamma_1(0),\\
			\gamma_2(0)=\gamma_1(0),\\
			\gamma_1'(0)+\gamma_2'(0)-\gamma_2'(2\ell)=0,
		\end{cases}
		\Longleftrightarrow\ \ 
		\begin{cases}
			\beta u'_{\ell,2}(2\ell)+\delta W(2\ell)=\alpha\varphi'(y(p,\ell)),\\
			-\beta u'_{\ell,2}(0)+\delta W(0)=\alpha\varphi'(y(p,\ell)),\\
			\alpha\varphi''(y(p,\ell))+\delta (W'(0)-W'(2\ell))=0.
		\end{cases}
	\end{equation}	
	Since  $u'_{\ell,2}(0)=-u'_{\ell,2}(2\ell)$ we have, by adding the first two equations in \eqref{Cont-EigenProb},
	\begin{equation}\label{seconlineareq}
		2\alpha\varphi'(y(p,\ell))=\delta\left(W(0)+W(2\ell)\right).
	\end{equation}
	We claim that the linear system 
	\begin{equation}\label{ClaimSystem}
		\begin{cases}
			\alpha\varphi''(y(p,\ell))+\delta (W'(0)-W'(2\ell))=0,\\
			2\alpha\varphi'(y(p,\ell))-\delta\left(W(0)+W(2\ell)\right)=0.
		\end{cases}
	\end{equation}
	has only the trivial solution $(\alpha,\delta)=(0,0)$. Observe that, if this is the case, then by the first equation in \eqref{Cont-EigenProb}, we find $\beta=0$ and therefore no nontrivial solution to the equation $\mathcal{L}^{-}_\lambda\Gamma=0$ exists in  $H^2_{\text{NK}}(\G_1)$.
	
    \medskip
	\noindent\textit{Step 6. Unique solvability of \eqref{ClaimSystem}.}
	
	The idea is to write the system \eqref{ClaimSystem} in terms of the soliton $\varphi$. Differentiating the second and third equations in \eqref{eqn.boundary_conditions} with respect to $C$, 
	\begin{equation}\label{W(0)-W(2ell)_relation}
		W(0)=2w'(2\ell;C)\ell'(C)+W(2\ell)=\varphi'(y(p,\ell))\frac{\partial y}{\partial\ell}(p,\ell(C))\ell'(C)
	\end{equation}
	and
	\begin{equation}\label{W'(0)_Explicit}
		W'(0)=-\frac{1}{2}\varphi''(y(p,\ell))\frac{\partial y}{\partial\ell}(p,\ell(C))\ell'(C)
	\end{equation}
	From \eqref{W(0)-W(2ell)_relation},
	\begin{equation}\label{W(2ell)_eq}
		W(2\ell)=\varphi'(y(p,\ell))\frac{\partial y}{\partial\ell}(p,\ell(C))\ell'(C)-\varphi'(y(p,\ell))\ell'(C).
	\end{equation}
	
	Recalling that $w(\cdot;C)=u_{\ell(C),2}$, we have, by the symmetry properties of $u_{\ell,2}$, that $w'(2\ell;C)=-w'(0;C)$. Differentiating this expression with respect to $C$ and using $u_{\ell,2}(0)=u_{\ell,2}(2\ell)=\varphi(y(p,\ell))$, we get
	\begin{align}\label{W'(0)-W'(2ell)_relation}
		-W'(0)&=2w''(2\ell;C)\ell'(C)+W'(2\ell)=-2f'(w(2\ell;C))\ell'(C)+W'(2\ell)\nonumber\\
		&=-2f'(u_{\ell,2}(0))\ell'(C)+W'(2\ell)=2\varphi''(y(p,\ell))\ell'(C)+W'(2\ell).
	\end{align}
	Now, \eqref{W'(0)_Explicit} and \eqref{W'(0)-W'(2ell)_relation} yield
	\begin{equation}\label{W'(2ell)_eq}
		W'(2\ell)=\frac{1}{2}\varphi''(y(p,\ell))\frac{\partial y}{\partial\ell}(p,\ell(C))\ell'(C)-2\varphi''(y(p,\ell))\ell'(C).
	\end{equation}

	Therefore, the proof of unique solvability of \eqref{ClaimSystem} reduces to checking that the matrix
	\begin{align*}
		A&=\begin{pmatrix}
			\varphi''(y(p,\ell)) & W'(0)-W'(2\ell) \\
			2\varphi'(y(p,\ell)) & -(W(0)+W(2\ell))
		\end{pmatrix}\\
		&=\begin{pmatrix}
			\varphi''(y(p,\ell)) & -\varphi''(y(p,\ell))\frac{\partial y}{\partial\ell}(p,\ell(C))\ell'(C)+2\varphi''(y(p,\ell))\ell'(C) \\
			2\varphi'(y(p,\ell)) & -2\varphi'(y(p,\ell))\frac{\partial y}{\partial\ell}(p,\ell(C))\ell'(C)+\varphi'(y(p,\ell))\ell'(C)
		\end{pmatrix},
	\end{align*}
 has nonzero determinant. By direct computation,
	 \begin{align*}
	 	\text{det}
	 	(A)=-3\varphi'(y(p,\ell))\varphi''(y(p,\ell))\ell'(C).
	 	\end{align*}
	 By construction of $\ell$ as a function of $C$, independently of taking $i=1$ or $i=2$, we know that $\varphi''(y(p,\ell(C))\neq0$. Since also $y(p,\ell)>0$, we have $\varphi'(y(p,\ell))\neq0$. Moreover, one can easily check by straightforward differentiation that, as a consequence of Lemma \ref{cor:4.2.Temp}, $\ell'(C)\neq0$ for any $C\neq C^*=\frac{3}{4}f(1)$ independently of choosing $i=1$ or $i=2$. In particular, it follows that $\text{det}(A)\neq0$ and the proof is finished.
\end{proof}	 

\section{Stability transitions near $p=6$}\label{sec:NonMonotoneTheta}

In view of having a stability transition, it is crucial to prove that the map $\lambda\mapsto\Theta(6,\lambda)$ is not monotone. For the tadpole graph, this was shown in \cite{noja2020standing}.

\begin{theorem}[{{\cite[Theorem 3]{noja2020standing}}}]\label{th:5.28}
	Let $\lambda>0$ and $u^\lambda$ be the action ground-state on the tadpole graph $\mathcal{G}_1$. Then, there exist $\lambda_1>0$ for which
\[
		\frac{\partial\Theta}{\partial\lambda}(6,\lambda)>0\quad\text{for}\quad \lambda\in(0,\lambda_1) \quad \text{and}\quad \frac{\partial\Theta}{\partial\lambda}(6,\lambda)<0\quad\text{for $\quad\lambda\in(\lambda_1,\infty),$}.
\]
\end{theorem}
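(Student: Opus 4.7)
The plan is to convert the stated monotonicity of $\lambda\mapsto\Theta(6,\lambda)$ into a sign analysis for $\partial_z\Theta_1(6,z)$ via the explicit expressions derived in Section \ref{sec:Preliminaries}. Since $p=6$ makes the scaling exponent $\frac{6-p}{2(p-2)}$ vanish, one has $\Theta(6,\lambda)=\Theta_1(6,\sqrt{\lambda})$. Composing with the bijection $\ell\mapsto z(6,\ell)=L^{-1}(6,\ell)$, which is strictly decreasing by Lemma \ref{cor:4.2.Temp}, and noting that $\lambda\mapsto\ell=\sqrt{\lambda}$ is strictly increasing, the theorem becomes equivalent to the existence of a \emph{unique} $z_1\in\bigl(0,3^{1/4}\bigr)$ such that $\partial_z\Theta_1(6,z)<0$ for $z\in(0,z_1)$ and $\partial_z\Theta_1(6,z)>0$ for $z\in(z_1,3^{1/4})$ (since $(p/2)^{1/(p-2)}=3^{1/4}$ when $p=6$).

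Next, I would exploit Lemma \ref{lemma:4.7.Temp} (tadpole case, $\theta=1/2$). Since $\tfrac{3}{4}f(z)-f(1)<0$ for every $z\in(0,3^{1/4})$, the sign of $\partial_z\Theta_1(6,z)$ is opposite to the sign of
\begin{equation*}
\Phi(z):=\tfrac{3}{4}z^2\sqrt{f(z)}-\tfrac{3}{4}f'(z)\!\int_{z}^{f_{2}^{-1}(3f(z)/4)}\!\!\left(t^2-\tfrac{g(t)}{2}\right)\!\frac{dt}{\sqrt{f(t)-\tfrac{3}{4}f(z)}},
\end{equation*}
with $f(t)=t^{2}/2-t^{6}/6$, $f'(t)=t-t^{5}$, and $g(t)$ as defined in Lemma \ref{lemma:mu_1reg}. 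So the whole problem reduces to showing that $\Phi$ vanishes exactly once in $(0,3^{1/4})$, being negative near $z=3^{1/4}$ and positive near $z=0$.

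The signs at the endpoints should come from asymptotic analysis. As $z\to 3^{1/4}$ (that is, $\ell\to 0^{+}$, $\lambda\to 0^{+}$), the upper limit of the integral collapses to $z$ and the compact-edge solution degenerates to the constant $3^{1/4}$; a Taylor expansion of $f$ around its maximum $z=3^{1/4}$, combined with the substitution $t=z+\varepsilon s$, provides the leading-order behavior of $\Phi$, from which the desired sign can be read off. As $z\to 0^{+}$ (that is, $\ell,\lambda\to\infty$), the change of variable $t=\varphi(s)$ converts both terms of $\Phi$ into integrals over a half-line that are controlled by the real-line soliton $\varphi$ with $\lambda=1$; the loop contribution becomes a perturbation of the whole-line mass and the computation of the leading term gives the opposite sign.

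The hardest step, and the real heart of the argument, is \emph{uniqueness} of the critical point $z_1$. For $p=6$ this is where the argument becomes tractable: $f(t)-c$ is a degree-$6$ polynomial factorable in the relevant regime, and the compact-edge solution $u_{\kappa,\ell}$ together with its $L^{2}$ mass admits closed-form expressions in terms of Jacobi $\mathrm{cn}/\mathrm{dn}$ functions and the complete elliptic integrals $K$ and $E$ with a modulus $k=k(z)$. One can therefore rewrite $\Phi(z)$ as an explicit combination of $K(k)$ and $E(k)$ and invoke Legendre-type identities and monotonicity properties of these special functions to show that this combination changes sign exactly once as $k$ traverses its range. This is precisely the approach followed in \cite{noja2020standing}, and reproducing it is both the cleanest and the only explicit route I see to completing the proof.
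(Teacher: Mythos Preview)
The paper does not prove Theorem \ref{th:5.28}; it is quoted verbatim from \cite{noja2020standing} and used as input. So there is no ``paper's own proof'' to compare against directly. What the paper does prove is the $\mathcal{T}$-graph analogue, Theorem \ref{th:5.29}, and the Remark following Lemma \ref{lemma:5.36} describes the method in \cite{noja2020standing} that inspired it.

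Your reduction is correct and matches the paper's template for Theorem \ref{th:5.29}: since the scaling exponent vanishes at $p=6$, $\Theta(6,\lambda)=\Theta_1(6,\sqrt\lambda)$, and via the decreasing bijection $\ell\mapsto z(6,\ell)$ the statement is equivalent to $\partial_z\Theta_1(6,z)$ having a unique sign change in $(0,3^{1/4})$, negative then positive. Your use of Lemma \ref{lemma:4.7.Temp}-2 to express this derivative is also exactly what the paper does in the $\mathcal{T}$ case (Lemma \ref{lemma:5.34}).

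Where your plan diverges is the uniqueness step. You propose rewriting the mass via Jacobi $\mathrm{cn}/\mathrm{dn}$ and the complete elliptic integrals $K,E$, and you assert this is ``precisely the approach followed in \cite{noja2020standing}''. According to the paper's own account (the Remark after Lemma \ref{lemma:5.36}), that is not what \cite{noja2020standing} does for this particular computation. Their Lemma 10 proceeds by first simplifying the integrand at $p=6$ (as in Lemma \ref{lemma:5.34} here), then writing $\partial_z\Theta_1(6,z)$ as a difference $F(z)-G(z)$ of two explicit elementary expressions, integrating $F$ by parts, and differentiating directly in $z$ to show $F$ is monotone in one direction and $G$ in the other. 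No special-function identities are invoked; the argument is an elementary sign analysis of the resulting integrands. The paper then adapts this same $F/G$ scheme to the $\mathcal{T}$-graph in Lemmas \ref{lemma:5.35}--\ref{lemma:5.36}, with a modification in the treatment of $F'$ because the $\theta=2$ sign structure differs from $\theta=1/2$.

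Your elliptic-integral route may well work, but it is a genuinely different (and heavier) argument than the one in \cite{noja2020standing} as described by this paper, and your claim that it reproduces their method is inaccurate. If you want to align with the cited proof, you should instead specialize the tadpole formula in Lemma \ref{lemma:4.7.Temp}-2 at $p=6$, simplify $t^2-\tfrac{g(t)}{2}$ explicitly, and carry out the $F/G$ monotonicity analysis.
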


Inspired by the arguments in \cite{noja2020standing}, we present the first complete characterization of the $L^2$-norm of action ground-states in the critical case $p=6$ on the $\mathcal{T}-$graph. This complements what was known in \cite{AgostinhoCorreiaTavares,pierotti2021local}. Namely, we prove

\begin{theorem}\label{th:5.29}
	Let $\lambda>0$ and $u^\lambda$ be the action ground-state on the $\mathcal{T}-$graph $\mathcal{T}_1$. Then,  there exist $\lambda_0>0$ for which
	\[
	\frac{\partial\Theta}{\partial\lambda}(6,\lambda)<0\quad\text{for}\quad \lambda\in(0,\lambda_0)\quad\text{for $\quad\lambda\in(\lambda_0,\infty)$}\quad\text{and}\quad 	\frac{\partial\Theta}{\partial\lambda}(6,\lambda)>0.
\]
\end{theorem}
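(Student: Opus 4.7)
The plan is to reduce the monotonicity claim to the sign analysis of an explicit one-variable function, and then prove that this function has exactly one sign change. First, since $(6-p)/(2(p-2))=0$ at $p=6$, identity \eqref{ThetaScaled} gives $\Theta(6,\lambda)=\Theta_1(6,\sqrt{\lambda})$, so $\sgn(\partial_\lambda\Theta(6,\lambda))=\sgn(\partial_\ell\Theta_1(6,\ell))$ at $\ell=\sqrt{\lambda}$. Writing $\Theta_1(6,\ell)=\Theta_1(6,z(6,\ell))$ via \eqref{Theta_1_T'} and combining the chain rule with $\partial_\ell z(6,\ell)<0$ from Lemma~\ref{cor:4.2.Temp}, the sign we need to analyze is the opposite of that of $\partial_z\Theta_1(6,z)$. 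For $z\in(0,3^{1/4})$ we have $-3f(z)-f(1)<0$ (since $f\geq 0$ on $[0,3^{1/4}]$ and $f(1)=1/3$), so Lemma~\ref{lemma:4.7.Temp} yields
\[
\sgn\bigl(\partial_\ell\Theta_1(6,\ell)\bigr)=\sgn\bigl(F(z(6,\ell))\bigr),\qquad F(z):=-6z^2\sqrt{f(z)}+3f'(z)\int_z^{f_2^{-1}(-3f(z))}\Bigl(t^2-\tfrac{g(t)}{2}\Bigr)\frac{dt}{\sqrt{f(t)+3f(z)}}.
\]
Since $z(6,\cdot)$ is continuous, strictly decreasing, with $z(6,\ell)\to 3^{1/4}$ as $\ell\to 0^+$ and $z(6,\ell)\to 0^+$ as $\ell\to\infty$, the theorem reduces to showing that $F$ admits a unique zero $z_0\in(0,3^{1/4})$ with $F<0$ on $(z_0,3^{1/4})$ and $F>0$ on $(0,z_0)$; the required frequency is then $\lambda_0=L(6,z_0)^2$.

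The second step is to establish the two limiting signs of $F$. As $z\to 0^+$ (i.e.\ $\ell\to\infty$), using $f(t)\sim t^2/2$ and $g(t)\to 3t^2$ near $t=0$, with $f_2^{-1}(-3f(z))\to 3^{1/4}$ and $f'(z)\sim z$, a careful splitting of the integral (local contribution near $t=z$ versus regular bulk on the remainder of $(0,3^{1/4})$) shows that $F(z)>0$. Conversely, as $z\to(3^{1/4})^{-}$ (i.e.\ $\ell\to 0^+$), both endpoints of integration collapse to $3^{1/4}$; with $f'(z)\to-2\cdot 3^{1/4}$, $f(z)\sim 2\cdot 3^{1/4}(3^{1/4}-z)$, and the interval length of order $3^{1/4}-z$, a local Taylor expansion around $t=3^{1/4}$ reveals that $F(z)<0$ to leading order in $(3^{1/4}-z)^{1/2}$. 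Continuity of $F$, guaranteed by Lemma~\ref{lemma:mu_1reg}, then yields the existence of at least one zero.

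The main obstacle is the uniqueness of this zero. To overcome it, the plan is to mimic the strategy of~\cite{noja2020standing} for the tadpole graph, leveraging the closed-form structure available at the critical exponent: the soliton is $\varphi(x)=3^{1/4}\sech^{1/2}(2x)$, and $f(t)=\tfrac{t^2}{6}(3-t^4)$ factors simply, so the substitution $u=t^2$ followed by $u-a=b\sin\psi$ (with $a,b$ adapted to the energy level $-3f(z)$) evaluates $\mu_1(6,z,2)$, $\mu_2(6,z)$, and the length $L(6,z)$ in terms of elementary inverse trigonometric and hyperbolic functions of $z$ (equivalently of $y=\varphi^{-1}(z)$). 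Plugging these closed forms into $F$ and simplifying, one expects $F$ to reduce to an elementary expression whose unique zero can be verified either by monotonicity analysis of an auxiliary function obtained after factoring out nonvanishing terms, or by showing that $F$ splits as the product of a positive factor and a strictly monotone function of $z$ on $(0,3^{1/4})$. Should the direct evaluation prove unwieldy, an alternative route is to reparametrize by $y=y(6,\ell)>0$, express $\Theta_1$ and $L$ as $y$-integrals along the half-line soliton profile, and exploit the $\cosh$/$\sinh$ identities to obtain cleaner manipulations, closely paralleling the tadpole argument of~\cite{noja2020standing}.
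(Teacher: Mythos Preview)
Your reduction (Step 1) is correct and matches the paper: at $p=6$ the scaling exponent vanishes, and the chain rule together with $\partial_\ell z<0$ reduces everything to the sign of $\partial_z\Theta_1(6,z)$.

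However, your plan for the two substantive parts has genuine gaps.

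\textbf{Endpoint asymptotics.} Your claim that $g(t)\to 3t^2$ as $t\to 0$ is incorrect: the second and third terms in the definition of $g$ have nonzero finite limits at $t=0$ (since $f(t)-f(1)\to -f(1)$ while $f'(t)\sim t$), and in fact $g(t)\to -2/3$. More to the point, the paper's first move is to observe that at $p=6$ the integrand simplifies drastically:
\[
3\Bigl(t^2-\frac{g(t)}{2}\Bigr)=\frac{1-t^2}{(1+t^2)^2}.
\]
This is the workhorse identity (Lemma~\ref{lemma:5.34}); with it the endpoint signs and the whole analysis become tractable, whereas your $(t^2-g(t)/2)$ form obscures the sign structure.

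\textbf{Uniqueness of the zero.} This is where your proposal is essentially a wish rather than a proof. You propose to evaluate $\mu_1,\mu_2,L$ in closed form via trigonometric substitutions and then check monotonicity of the resulting expression; but you give no indication that the resulting formula is amenable to a sign analysis, and the tadpole argument in~\cite{noja2020standing} you cite as a model does \emph{not} proceed this way either. The paper takes a completely different route: using the simplified integrand above, it first treats $z\in[1,3^{1/4})$ by an integration by parts that makes the sign manifest (Lemma~\ref{lemma:5.35}), and then, for $z\in(0,1)$, writes
\[
\sqrt{2}\,\partial_z\Theta_1(6,z)\,\frac{-3f(z)-f(1)}{f'(z)}=F(z)-G(z),
\]
where $F(z)=\int_z^{f_2^{-1}(-3f(z))}\frac{1-t^2}{(1+t^2)^2}\frac{dt}{\sqrt{f(t)+3f(z)}}$ and $G(z)=\frac{6z^2\sqrt{f(z)}}{f'(z)}$. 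The uniqueness then follows from proving that $F$ is strictly decreasing on $(0,1)$ with $F(0^+)=+\infty$, $F(1^-)<0$, while $G$ is strictly increasing with $G(0^+)=0$, $G(1^-)=+\infty$ (Lemma~\ref{lemma:5.36}). The monotonicity of $G$ is a direct computation; the monotonicity of $F$ requires the change of variable $t=1+s(f_2^{-1}(-3f(z))-1)$ and a pointwise-in-$s$ sign analysis of $\partial_z$ of the resulting kernel---this is the actual technical heart of the proof, and nothing in your proposal addresses it.
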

We prove this result in a series of lemmas below. For now, having these results at hand, we conclude the proof of the first result of this paper.

\begin{proof}[Proof of Theorem \ref{thm:transition_particular}]
We apply Theorem \ref{thm:transition} (see also Remark \ref{rem:lambda_estrela}) to the curve of positive action ground-states $\lambda\in (0,\infty)\mapsto u^\lambda$. Indeed, condition $(A_1)$ is satisfied by Lemma \ref{lemma:A1}, $(A_2)$ by Lemma \ref{lemma:5.32} and $(A_3)$ (for $\lambda\neq \lambda^*$) is satisfied by Proposition \ref{prop:5.26}. Condition $(H)$, on the other hand, is satisfied by Theorem \ref{thm:description_edge_by_edge}. Recalling from Section \ref{sec:explicit_mass} that $\ell=\sqrt{\lambda}$ and
\[
\Theta(p,\lambda)=\lambda^\frac{6-p}{2(p-2)}\Theta_1(p,z(p,\ell)),
\]
the map $(p,\lambda)\mapsto \frac{\partial\Theta}{\partial \lambda}(p,\lambda)$ is continuous by Lemmas \ref{cor:4.2.Temp} and \ref{lemma:4.7.Temp}. Finally, the map $p\mapsto \Theta(6,\lambda)$ is not monotone by Theorems \ref{th:5.28} (for the tadpole) and \ref{th:5.29} (for the $\mathcal{T}$-graph).
\end{proof}

In the next section, we proof Theorem \ref{th:5.29}.
\subsection{Exact behavior of $\lambda\mapsto \Theta(6,\lambda)$ in the $\mathcal{T}$-graph}

From now on, we work in the $\mathcal{T}$-graph. Recall from \eqref{ThetaScaled} and \eqref{eq:Theta_1} that $\Theta(6,\lambda)=\Theta_1(6,\ell)$. From Lemma \ref{lemma:Theta_1_expressions} we write, with some abuse of notation,
\[
\Theta_1(6,\ell)=\Theta_1(6,z(p,\ell)).
\]
The derivative of $\Theta_1$ in $z$ is given in Lemma \ref{lemma:4.7.Temp} for a general $p$. By taking $p=6$, we deduce the following.

\begin{lemma}\label{lemma:5.34}
In the $\mathcal{T}$-graph, for $z\in \left(0,\sqrt[4]{3}\right)$ we have
	\begin{align}\label{MapMuDerivative}
		\sqrt{2}\frac{\partial \Theta_1}{\partial z}(6,z)(-3f(z)-f(1))&=-6z^2\sqrt{f(z)}+f'(z)\int_z^{f_2^{-1}(-3f(z))}\frac{1-t^2}{(1+t^2)^2}\frac{dt}{\sqrt{f(t)+3f(z)}}.
	\end{align}
\end{lemma}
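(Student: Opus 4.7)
The plan is to specialize the general formula from Lemma \ref{lemma:4.7.Temp} to the case $p=6$. For the $\mathcal{T}$-graph ($\theta=2$), that lemma provides
\begin{align*}
\sqrt{2}\frac{\partial\Theta_1}{\partial z}(p,z)\left(-3f(z)-f(1)\right)
 &= -6z^2\sqrt{f(z)} \\
 &\quad + 3f'(z)\int_z^{f_2^{-1}(-3f(z))}\left(t^2-\frac{g(t)}{2}\right)\frac{dt}{\sqrt{f(t)+3f(z)}},
\end{align*}
where $g$ is as in Lemma \ref{lemma:mu_1reg}. Thus the claim reduces to the algebraic identity
\[
3\left(t^2-\frac{g(t)}{2}\right) = \frac{1-t^2}{(1+t^2)^2}, \qquad p=6.
\]

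To verify this, I would record the relevant quantities at $p=6$: $f(t)=\tfrac{t^2}{2}-\tfrac{t^6}{6}$, $f'(t)=t-t^5=-t(t^2-1)(t^2+1)$, $f''(t)=1-5t^4$, and $f(1)=\tfrac{1}{3}$. The key factorization is
\[
f(t)-f(1) = \frac{3t^2-t^6-2}{6} = -\frac{(t^2-1)^2(t^2+2)}{6},
\]
which immediately yields the clean expressions
\[
\frac{f(t)-f(1)}{f'(t)^2} = -\frac{t^2+2}{6t^2(t^2+1)^2}, \qquad \frac{f(t)-f(1)}{f'(t)} = \frac{(t^2-1)(t^2+2)}{6t(t^2+1)}.
\]

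Plugging these into
\[
g(t)=3t^2-2t^2\frac{f(t)-f(1)}{f'(t)^2}f''(t)+4t\frac{f(t)-f(1)}{f'(t)}
\]
and combining over the common denominator $6(t^2+1)^2$, one finds after expanding $-3t^2(t^2+1)^2$, $-(t^2+2)(1-5t^4)$, and $-2(t^2-1)(t^2+2)(t^2+1)$ that the degree $4$ and $6$ coefficients cancel, leaving numerator $2(1-t^2)$. Hence
\[
t^2-\frac{g(t)}{2} = \frac{1-t^2}{3(t^2+1)^2},
\]
so multiplication by $3$ gives the desired integrand. Substituting into the general formula completes the proof.

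There is no conceptual obstacle here; the only risk is a sign or bookkeeping slip in expanding the numerator, which is mitigated by verifying the cancellation of the $t^6$ and $t^4$ terms explicitly before collecting the lower-order contributions.
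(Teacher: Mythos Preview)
Your proof is correct and follows essentially the same approach as the paper: you specialize Lemma~\ref{lemma:4.7.Temp} to $p=6$ and then verify the algebraic identity $t^2-\tfrac{g(t)}{2}=\tfrac{1-t^2}{3(1+t^2)^2}$ by explicit computation, exactly as the paper does. Your factorization $f(t)-f(1)=-\tfrac{(t^2-1)^2(t^2+2)}{6}$ is a clean way to organize the bookkeeping, and the cancellation of the $t^6$ and $t^4$ terms you describe is precisely what produces the final simplification.
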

\begin{proof}
	By taking $p=6$ in Lemma \ref{lemma:4.7.Temp}-1, we have
	\begin{align*}
		\sqrt{2}\frac{\partial \Theta_1}{\partial z}(6,z)&\left(-3f(z)-f(1)\right)=-6z^2\sqrt{f(z)}\\
		+&3f'(z)\int_z^{f_2^{-1}(-3f(z))}\left(-\frac{t^2}{2}-2t\frac{f(t)-f(1)}{f'(t)}+t^2f''(t)\frac{f(t)-f(1)}{f'(t)^2}\right)\frac{dt}{\sqrt{f(t)+3f(z)}}.
	\end{align*}
	Finally, by recalling that $f(t)=\frac{1}{2} t^2-\frac{1}{6}t^6$, we have 
\[
		-\frac{t^2}{2}-2t\frac{f(t)-f(1)}{f'(t)}+t^2f''(t)\frac{f(t)-f(1)}{f'(t)^2}=\frac{(1-t^2)}{3(1+t^2)^2}.\qedhere
\]
\end{proof}

In the following lemmas, we analyze the sign of $\frac{\partial\Theta_1}{\partial z}(6,z)$. We split this analysis in the cases $z\geq 1$ and $z\in(0,1)$, starting with the former.
\begin{lemma}\label{lemma:5.35}
Under the previous notations, in the $\mathcal{T}$-graph we have
	$\frac{\partial \Theta_1}{\partial z}(6,z)>0$ for any $z\in [1,\sqrt[4]{3})$.
\end{lemma}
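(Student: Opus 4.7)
The strategy is to invoke Lemma~\ref{lemma:5.34} and show that its right-hand side,
\[
R(z) := -6z^2\sqrt{f(z)} + f'(z)\,J(z), \qquad J(z) := \int_z^{b(z)} \frac{1-t^2}{(1+t^2)^2}\,\frac{dt}{\sqrt{f(t)+3f(z)}},
\]
with $b(z) := f_2^{-1}(-3f(z))$, is strictly negative on $[1,\sqrt[4]{3})$. Since the coefficient $-3f(z)-f(1)$ on the left-hand side of that identity is negative on this range, this is equivalent to $\partial_z \Theta_1(6,z) > 0$. A pure sign argument is inconclusive: for $z\geq 1$ and $t\in[z,b(z)]$ both $f'(z)\leq 0$ and $1-t^2\leq 0$, so the contribution $f'(z)J(z)\geq 0$ competes with the non-positive $-6z^2\sqrt{f(z)}$; a quantitative estimate on $|J(z)|$ is therefore needed.

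The key observation, specific to $p=6$, is the factorization $f'(t)=t(1-t^2)(1+t^2)$, which yields the identity
\[
\frac{1-t^2}{(1+t^2)^2} \;=\; \frac{f'(t)}{t\,(1+t^2)^3}.
\]
Inserting this into $J(z)$ and performing the change of variable $u = f(t)+3f(z)$ (so $du = f'(t)\,dt$, and $u$ decreases monotonically from $4f(z)$ at $t=z$ to $0$ at $t=b(z)$) converts the singular integral into the elementary form
\[
|J(z)| \;=\; \int_0^{4f(z)} \frac{du}{t(u)\,(1+t(u)^2)^3\,\sqrt{u}},
\]
where $t(u)\in[z,b(z)]$ denotes the inverse map. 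Since $t(u)\geq z\geq 1$, the trivial monotonicity bound $t(u)(1+t(u)^2)^3 \geq z(1+z^2)^3$ gives
\[
|J(z)| \;\leq\; \frac{1}{z(1+z^2)^3}\int_0^{4f(z)} u^{-1/2}\,du \;=\; \frac{4\sqrt{f(z)}}{z(1+z^2)^3}.
\]

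Multiplying by $|f'(z)|=z(z^4-1)$ gives $|f'(z)J(z)|\leq \frac{4(z^4-1)\sqrt{f(z)}}{(1+z^2)^3}$, so the strict negativity of $R(z)$ reduces to the elementary polynomial inequality $4(z^4-1) < 6z^2(1+z^2)^3$ on $z\in(1,\sqrt[4]{3})$. On this range the left-hand side is at most $8$ while the right-hand side is at least $48$, so the inequality is trivial. At the endpoint $z=1$, $f'(1)=0$ kills the integral contribution and $R(1)=-6\sqrt{f(1)}=-2\sqrt{3}<0$ directly. The main obstacle is simply recognizing that pure sign analysis fails and spotting the $p=6$-specific identity above, which makes the substitution $u=f(t)+3f(z)$ collapse the singular integral into $\int u^{-1/2}\,du$; once this reduction is in place, all remaining estimates are routine.
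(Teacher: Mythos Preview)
Your proof is correct and is essentially the same as the paper's argument. Both exploit the $p=6$ factorization $f'(t)=t(1-t^2)(1+t^2)$ to rewrite the integrand as $\frac{2}{t(1+t^2)^3}\frac{d}{dt}\sqrt{f(t)+3f(z)}$; the paper then integrates by parts (obtaining the boundary term $-\tfrac{4\sqrt{f(z)}}{z(1+z^2)^3}$ plus a remainder integral of definite sign), while your substitution $u=f(t)+3f(z)$ together with the monotonicity bound $t(1+t^2)^3\ge z(1+z^2)^3$ produces exactly that same boundary term as the upper bound for $|J(z)|$, and both routes reduce to the identical polynomial inequality $4(z^2-1)<6z^2(1+z^2)^2$.
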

\begin{proof}
	Suppose that $z\geq1$. Using the explict expression $f'(z)=z(1-z^4)$, integration by parts yields
	\begin{align*}
		\int_z^{f_2^{-1}(-3f(z))}\frac{1-t^2}{(1+t^2)^2}\frac{dt}{\sqrt{f(t)+3f(z)}}&=\int_z^{f_2^{-1}(-3f(z))}\frac{2}{t(1+t^2)^3}\frac{d}{dt}\sqrt{f(t)+3f(z)}dt\\
		&=-\frac{4\sqrt{f(z)}}{z(1+z^2)^3}+2\int_z^{f_2^{-1}(-3f(z))}\frac{1+7t^2}{t^2(1+t^2)^4}\sqrt{f(t)+3f(z)}dt.
	\end{align*}
	Thus,\vspace{-0.2cm}
	\begin{align*}
		\sqrt{2}\frac{\partial \Theta_1}{\partial z}(6,z)(-3f(z)-f(1))=&-6z^2\sqrt{f(z)}-\frac{4(1-z^2)\sqrt{f(z)}}{(1+z^2)^2}\\
		&+2f'(z)\int_z^{f_2^{-1}(-3f(z))}\frac{1+7t^2}{t^2(1+t^2)^4}\sqrt{f(t)+3f(z)}dt.
	\end{align*}
	Since $f'(z)\leq0$ for $z\geq1$, $(-3f(z)-f(1))<0$, and
	\[
	-6z^2\sqrt{f(z)}-\frac{4(1-z^2)\sqrt{f(z)}}{(1+z^2)^2}=-\frac{\sqrt{f(z)}}{(1+z^2)^2}\left(6z^2(1+z^2)^2+4(1-z^2)\right)<0,
	\]
	 we have $\frac{\partial \Theta_1}{\partial z}(6,z)>0$.
\end{proof}

In order to study $\frac{\partial \Theta_1}{\partial z}(6,z)$ for $z\in (0,1)$, recalling \eqref{MapMuDerivative}, we write
\[
\sqrt{2}\frac{\partial \Theta_1}{\partial z}(6,z)\frac{-3f(z)-f(1)}{f'(z)}=F(z)-G(z),
\]
where  $F,G:(0,1)\to\R$ are defined by the following expressions:
\begin{equation}\label{FunctionsFG}
	F(z)=\int_{z}^{f_2^{-1}(-3f(z))}\frac{1-t^2}{(1+t^2)^2}\frac{dt}{\sqrt{f(t)+3f(z)}}\quad\text{and}\quad G(z)=\frac{6z^2\sqrt{f(z)}}{f'(z)}.
\end{equation}

\begin{lemma}\label{lemma:5.36}
	Let $F,G:(0,1)\to\R$ be defined as in \eqref{FunctionsFG}. Then,\setlist{nolistsep}
	\begin{enumerate}[noitemsep]
		\item $F$ is decreasing in the interval $(0,1)$,
		\item $\lim\limits_{z\to0^+}F(z)=+\infty$ and $F(1):=\lim\limits_{z\to1^-}F(z)<0$,
		\item $G(z)$ is increasing in $z\in(0,1)$,
		\item $\lim\limits_{z\to0^+}G(z)=0$ and $\lim\limits_{z\to1^-}G(z)=+\infty$.
	\end{enumerate}
\end{lemma}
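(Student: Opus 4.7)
My plan splits along the two functions. For $G$ (items 3 and 4), I would note that
\[
G(z)^2 = \frac{36 z^4 f(z)}{f'(z)^2} = \frac{6 z^4(3-z^4)}{(1-z^4)^2},
\]
and substitute $u = z^4 \in (0,1)$, so that $G^2$ becomes $h(u):= 6u(3-u)/(1-u)^2$; a direct computation gives $h'(u) = 6(3+u)/(1-u)^3 > 0$, hence $G^2$, and so $G$ (positive on $(0,1)$), is strictly increasing. For the limits, the Taylor expansions $f(z) = z^2/2 + O(z^6)$, $f'(z) = z + O(z^5)$ yield $G(z) \sim 3\sqrt{2}\,z \to 0$ as $z \to 0^+$, while as $z \to 1^-$ the numerator of $G$ approaches $6\sqrt{f(1)}$ and the denominator $f'(z) = z(1-z^4) \to 0^+$, producing $G(z) \to +\infty$.

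For the limits of $F$ (item 2), as $z \to 0^+$ the dominant contribution comes from $t$ near the lower endpoint, where $(1-t^2)/(1+t^2)^2 \approx 1$ and $f(t) + 3f(z) \approx (t^2+3z^2)/2$. The substitution $s = t/z$ gives
\[
\sqrt{2}\int_z^\delta \frac{dt}{\sqrt{t^2+3z^2}} = \sqrt{2}\int_1^{\delta/z}\frac{ds}{\sqrt{s^2+3}} \sim \sqrt{2}\,\log(\delta/z) \longrightarrow +\infty
\]
as $z \to 0^+$, for any small fixed $\delta > 0$, while the remainder $\int_\delta^{f_2^{-1}(-3f(z))}$ stays bounded in $z$; hence $F(z) \to +\infty$. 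As $z \to 1^-$, I would split $F(z)$ at $t = 1$: the piece over $(z,1)$ vanishes since its interval shrinks with bounded integrand, and the piece over $(1, f_2^{-1}(-3f(z)))$, after the substitution $t = 1 + s(f_2^{-1}(-3f(z))-1)$ which turns the Abel singularity at the upper endpoint into a uniform $1/\sqrt{1-s}$ singularity at $s = 1$, admits dominated convergence and yields
\[
F(1^-) = \int_1^{f_2^{-1}(-1)} \frac{1-t^2}{(1+t^2)^2\sqrt{f(t)+1}}\, dt,
\]
which is strictly negative since $1 - t^2 < 0$ throughout the integration range.

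The main obstacle is item 1 (strict monotonicity of $F$ on $(0,1)$), because a naive Leibniz differentiation produces a boundary contribution at the moving endpoint $f_2^{-1}(-3f(z))$ and a $\partial_z$-integrand proportional to $(f(t)+3f(z))^{-3/2}$, both individually divergent and only meaningful in combination. My preferred route is to pass to the phase-plane parameterization $t = u_\kappa(x;z)$, where $u_\kappa(\cdot;z)$ is the increasing profile on the terminal edge solving \eqref{eq:sol_finite_edge} on $[0, L(6,z)]$; this recasts $F$ as
\[
F(z) = \sqrt{2}\int_0^{L(6,z)} \frac{1 - u_\kappa(x;z)^2}{(1+u_\kappa(x;z)^2)^2}\, dx,
\]
a manifestly non-singular expression that can be differentiated legitimately, the only ingredients being $\partial_z L(6,z)$ (supplied by Lemma \ref{lemma:L'}) and $\partial_z u_\kappa$, governed by the linearization of \eqref{eq:sol_finite_edge} along $u_\kappa$. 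An alternative, parallel to the regularization scheme in Lemma \ref{lemma:L'}, exploits the antiderivative identity $(t/(1+t^2))' = (1-t^2)/(1+t^2)^2$ together with an integration by parts of the form $\int(\cdots)\sqrt{f(t)+3f(z)}\,dt$ to rewrite $F(z)$ with a bounded kernel before differentiating in $z$. Once $F'(z)$ is put in non-singular form, the remaining pointwise sign analysis on $(0,1)$ adapts, to the $\mathcal{T}$-graph case $\theta = 2$, the scheme developed for the tadpole ($\theta = 1/2$) in \cite[Theorem 3]{noja2020standing}.
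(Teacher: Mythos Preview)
Your handling of items 2--4 is correct and, for item 3, arguably cleaner than the paper's direct computation of $G'$ (one minor slip: $G(z)\sim 3\sqrt{2}\,z^2$, not $3\sqrt{2}\,z$, but the conclusion is unaffected).

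The real gap is item 1. You do not carry out the sign analysis of $F'(z)$; you sketch two regularization routes and then assert that the ``remaining pointwise sign analysis adapts\ldots the scheme developed for the tadpole in \cite{noja2020standing}''. This is exactly the step that fails. The paper's Remark preceding the proof explains why: in the tadpole argument one integrates $F$ by parts and differentiates in $z$, obtaining an integral term with a favourable negative coefficient; for the $\mathcal{T}$-graph ($\theta=2$) the analogous coefficient is $3f'(z)>0$, and the integral term in $F'$ acquires the wrong sign. So the tadpole scheme does \emph{not} simply adapt. Your phase-plane formula $F(z)=\sqrt{2}\int_0^{L(6,z)}\frac{1-u_\kappa^2}{(1+u_\kappa^2)^2}\,dx$ is correct and elegant, but it does not sidestep the difficulty either: differentiating it produces a boundary term $\frac{1-u_\kappa(L)^2}{(1+u_\kappa(L)^2)^2}\,\partial_z L$ which is \emph{positive} (since $u_\kappa(L)>1$ and $\partial_z L<0$), so a competing-sign analysis is still required, and you have not indicated how to close it.

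The paper's method is genuinely different from both of your proposed routes. It splits $F=F_1+F_2$ at $t=1$. On $(z,1)$ the factor $1-t^2$ is positive, so direct Leibniz differentiation of $F_1$ already gives two manifestly negative terms. For $F_2$ the paper uses the substitution $t=1+sI_z$ with $I_z=f_2^{-1}(-3f(z))-1$ (the same one you invoke for the limit $z\to 1^-$) to fix the domain at $[0,1]$, and then shows, through a detailed computation of $\partial_z K(z,s)$, that the resulting integrand $K(z,s)$ is strictly decreasing in $z$ for every fixed $s\in(0,1)$. That pointwise monotonicity of $K$ is the substantive new idea, and nothing in your outline supplies it.
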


\begin{remark}The proof of Lemma \ref{lemma:5.36} is inspired by the proof of \cite[Lemma 10]{noja2020standing}, where the authors analyzed the same behaviour for the tadpole graph. In that work, to understand the sign of $F'$, the authors integrate by parts the expression of $F$ and then directly compute the derivative with respect to $z$. This argument can be seen in \cite[Lemma 10]{noja2020standing} after equation (4.33). In our case, however, if we integrate by parts the expression $F$ and then differentiate with respect to $z$, we encounter the problem that $3f'(z)>0$ (as opposed to $-\frac{3}{8}A'(z)<0$ in \cite[Lemma 10]{noja2020standing}) causes the integral term in the expression of $F'$ to have positive sign, instead of the desired negative sign. Thus, we take a different approach. We rewrite the integrand in $F$ in a suitable way and prove that it is decreasing in $z$.
\end{remark}

\begin{proof}[Proof of Lemma \ref{lemma:5.36}]
	Item 3. is a consequence of 
	\begin{align*}
		G'(z)&=\frac{6}{f'(z)^2}\left(\left(2z\sqrt{f(z)}+\frac{z^2f'(z)}{2\sqrt{f(z)}}\right)f'(z)-z^2\sqrt{f(z)}f''(z)\right)\\
		&=\frac{3}{f'(z)^2\sqrt{f(z)}}\left(4zf(z)f'(z)+z^2(f'(z))^2-2z^2f(z)f''(z)\right)\\
		&=\frac{3z^2}{f'(z)^2\sqrt{f(z)}}\left(2f(z)z^2\left(1+3z^4\right)+f'(z)^2\right)>0,
	\end{align*}	
	while Item 4 is straightforward.

		As for Item 2, from dominated convergence,
	\begin{align*}
		\lim_{z\to1^-}F(z)=\int_1^{f_2^{-1}(-3f(1))}\frac{1-t^2}{(1+t^2)^2}\frac{dt}{\sqrt{f(t)+3f(1)}}\leq 0
	\end{align*}	
	while the limit as $z\to 0^+$ is a consequence of the fact that  $\frac{1}{\sqrt{f(t)t}}\sim \frac{\sqrt{2}}{t}$ for $t$ close to 0 (not integrable) and $\frac{1}{\sqrt{f(t)}}\sim \frac{c}{\sqrt{t-\sqrt[4]{3}}}$ for $t$ close to $f_2^{-1}(0)=\sqrt[4]{3}$ (integrable).

	 Finally, we prove Item 1. We rewrite $F$ as
\[
		F(z)=\left(\int_z^1+\int_1^{f_2^{-1}(-3f(z))}\right)\frac{1-t^2}{(1+t^2)^2}\frac{dt}{\sqrt{f(t)+3f(z_0)}}=:F_1(z)+F_2(z).
\]
	Differentiating under the integral we have, for all $z\in(0,1)$,
	\begin{equation*}
		F'_1(z)=-\frac{1-z^2}{2(1+z^2)^2\sqrt{f(z)}}-\frac{3}{2}f'(z)\int_z^1\frac{1-t^2}{(1+t^2)^2}(f(t)+3f(z))^{-\frac{3}{2}}dt<0.
	\end{equation*}
For $F_2$, taking the variable change $t=1+sI_z$, where $I_z=f_2^{-1}(-3f(z))-1>0$, we obtain
	\begin{equation}\label{F2DecreasingForm}
		F_2(z)=\int_0^1\frac{(1-(1+sI_z)^2)I_zds}{(1+(1+sI_z)^2)^2\sqrt{f(1+sI_z)+3f(z)}}=:\int_0^1K(z,s)ds.
	\end{equation}
To conclude the proof, we claim that
the map $z\mapsto K(z,s)$ is strictly decreasing for every $s\in(0,1)$.
To simplify notation, we let $B(z,s)=f(1+sI_z)+3f(z)>0$ and $X(z,s)=1+sI_z>1$. Differentiating $K$ with respect to $z$ yields
	\begin{align}
		\frac{\partial K}{\partial z}(z,s)=&\frac{1}{(1+X(z,s)^2)^4B(z,s)}\left[(1+X(z,s)^2)^2\sqrt{B(z,s)}\left(-2X(z,s)sI'_zI_z+(1-X(z,s)^2)I'_z\right)\right.\nonumber\\
		&-\left.(1-X(z,s)^2)I_z\left(4sX(z,s)(1+X(z,s)^2)I'_z\sqrt{B(z,s)}+(1+X(z,s)^2)^2\frac{\partial_z B(z,s)}{2\sqrt{B(z,s)}}\right)\right]\nonumber\\
		=&\frac{1}{2(1+X(z,s)^2)^4B(z,s)^\frac{3}{2}}\left[2(1+X(z,s)^2)^2B(z,s)I'_z\left(-2X(z,s)sI_z+(1-X(z,s)^2)\right)\right.\nonumber\\
		&-\left.(1-X(z,s)^2)I_z\left(8sX(z,s)(1+X(z,s)^2)I'_zB(z,s)+(1+X(z,s)^2)^2 \partial_z B(z,s)\right)\right]. \nonumber \\
		=&\frac{1}{2(1+X(z,s)^2)^4B(z,s)^\frac{3}{2}}\left[Q(z,s) + 2(1+X(z,s)^2)^2B(z,s)I'_z(1-X(z,s)^2)\right. \nonumber \\
		&\left.-(1-X(z,s)^2)(1+X(z,s)^2)^2I_z\partial_zB(z,s) \right], \label{DerK}
	\end{align}
	for 
\[
Q(z,s):=-(1-X(z,s)^2)I_z8sX(z,s)(1+X(z,s)^2)I'_zB(z,s)-4(1+X(z,s)^2)^2B(z,s)I'_zX(z,s)sI_z.
\]
	Recall now the following simple facts: for $z\in(0,1)$, we have  $X(z,s)>1$, $f'(z)>0$, $I_z=f_2^{-1}(-3f(z))-1>0$ and 
	\[
	I'_z=\frac{-3f'(z)}{f'(f_2^{-1}(-3f(z)))}>0,\ \text{for all}\ z\in(0,1),\quad \text{ with }\quad	I'_0=0.
	\]
Observe now that the term outside the brackets in \eqref{DerK} is positive. We show that the term inside are negative:
	\begin{itemize}
	\item $2(1+X(z,s)^2)^2B(z,s)I'_z(1-X(z,s)^2)<0$, by the observations made in the previous paragraph.
	\item Observe that $-(1-X(z,s)^2)(1+X(z,s)^2)^2I_z>0$ and 	\begin{equation*}
\frac{\partial^2 B}{\partial s\partial z}(z,s)=\frac{\partial}{\partial s}\left(f'(1+sI_z)sI'_z+3f'(z)\right)=f''(1+sI_z)s^2{I'_z}^2+f'(1+sI_z)I'_z<0.
	\end{equation*}
	Thus, the map $s\mapsto \partial_z B(z,s)$ is strictly decreasing for every $z\in(0,1)$. Moreover, since $\partial_zB(0,s)=f'(1)sI'_0+3f'(0)=0$, we have that $\partial_z B(z,s)<0$ for all $(z,s)\in(0,1)^2$. This implies that the term $(1+X(z,s)^2)^2\partial_z B(z,s)$ in \eqref{DerK} is negative.
\item Finally, we show that $Q(z,s)$ has negative sign.

 We note that $Q$ can be simplified to
	$$Q(z,s)=-4sI_zI'_zB(z,s)X(z,s)(1+X(z,s)^2)(3-X(z,s)^2).$$
	The term $-4sI_zI'_zB(z,s)X(z,s)(1+X(z,s)^2)$ is negative. We show that $h(z,s):=3-X(z,s)^2>0$. Differentiating $h$ with respect to $s$, we get
	$\frac{\partial h}{\partial s}(z,s)=-2X(z,s)I_z<0$; that is, $s\mapsto h(z,s)$ is strictly decreasing for every $z\in(0,1)$. To conclude, we  show that $h(z,1)>0$. Note that
	$$h(z,1)=3-X(z,1)^2=3-(1+I_z)^2=3-(f_2^{-1}(-3f(z)))^2,$$
	and that (recalling that $f(1)=1/3$, since $p=6$), 
	$$\frac{\partial h}{\partial z}(z,1)=\frac{6f'(z)(f_2^{-1}(-3f(z))}{f'(f_2^{-1}(-3f(z)))}<0\quad \text{ and }\quad h(1,1)=3-(f_2^{-1}(-3f(1)))^2=3-(f_2^{-1}(-1))^2>0$$
	\end{itemize}
	This finishes the proof of Item 1.
\end{proof}

We now conclude this section with the following.

\begin{proof}[Proof of Theorem \ref{th:5.29}]
	By Lemmas \ref{lemma:5.35} and \ref{lemma:5.36},  there exists a unique $z_0\in(0,1)$ such that 
	\begin{equation}\label{existence_of_z0}
	\frac{\partial \Theta_1}{\partial z}(6,z)<0\quad \text{ for any } z\in(0,z_0),\qquad  \frac{\partial \Theta_1}{\partial z}(6,z)>0\quad \text{for any $z\in (z_0,\sqrt[4]{3})$. }
	\end{equation}
	The conclusion now follows from
	\[
	\frac{\partial \Theta}{\partial \lambda}(6,\lambda)=	\frac{\partial }{\partial \lambda}(\Theta_1(6,\ell(\lambda)))=\frac{\partial \Theta_1}{\partial z}(6,z(6,\ell(\lambda)))\frac{\partial z}{\partial\ell}(6,\ell(\lambda))\frac{d \ell}{d\lambda}(\lambda). 
	\]
	The conclusion now follows from \eqref{existence_of_z0}, Lemma \ref{cor:4.2.Temp} and the explicit expression $\ell(\lambda)=\lambda^{\frac{1}{2}}$.
\end{proof}

\section{Asymptotic stability analysis in $\lambda$ and $p$}\label{sec:asymptotics}

Recall from \eqref{ThetaScaled} that
\begin{equation}\label{eq:alpha}
    \Theta(p,\lambda)=\lambda^\alpha\Theta_1(p,\ell(\lambda)),\quad \text{where $\alpha=\frac{6-p}{2(p-2)}$ and $\ell(\lambda)=\sqrt{\lambda}$},
\end{equation}
and $\Theta_1$ is as in \eqref{Theta_1_T'} and \eqref{Theta_1_TADPOLE'}, for the $\mathcal{T}$-graph and tadpole graph, respectively. Setting
\begin{equation}\label{z_func_ell}
z(p,\ell)=L^{-1}(p,\ell)
\end{equation}
(cf. Theorem \ref{thm:description_edge_by_edge}), Lemma \ref{cor:4.2.Temp} implies
\begin{align}
     \frac{\partial\Theta}{\partial\lambda}(p,\lambda)&=\lambda^{\alpha-1}\left(\alpha\Theta_1(p,z(p,\ell(\lambda))+\frac{\sqrt{\lambda}}{2}\frac{\partial\Theta_1}{\partial z}(p,z(p,\ell(\lambda))\frac{\partial z}{\partial \ell}(p,\ell(\lambda))\right)\nonumber \\
     &=\lambda^{\alpha-1}\left(\alpha\Theta_1(p,z(p,\ell(\lambda))+\frac{L(p,z)}{2}\frac{\partial\Theta_1}{\partial z}(p,z(p,\ell(\lambda))\frac{1}{\frac{\partial L}{\partial z}(p,z(p,\ell(\lambda)))}\right)\label{der_Theta_z}.
\end{align}
Since $L(p,z(p,\ell))=\ell=\sqrt{\lambda}$, we study the asymptotic behavior of $\frac{\partial\Theta}{\partial\lambda}(p,\lambda)$ in $\lambda$ and $p$ in terms of the variable $z$ by analyzing the behavior of 
\begin{equation}\label{der_Theta_z2}
   z\mapsto \alpha\Theta_1(p,z)+\frac{L(p,z)}{2}\frac{\partial\Theta_1}{\partial z}(p,z)\frac{1}{\frac{\partial L}{\partial z}(p,z)}.
\end{equation}
\subsection{Asymptotics in $\lambda$}\label{sec:asymptotics_lambda}
Observe that, by \eqref{eq:asymptotic_L}, \eqref{z_func_ell} and the behavior of $\varphi$,
\begin{equation}\label{eq:limitzlambda}
    \lambda\to 0^+ \iff z\to \varphi(0)^- \quad \text{ and } \quad \lambda\to +\infty \iff z\to 0^+.
\end{equation}

\begin{lemma}\label{prop6.1}
    We have 
    \[
    \lim_{z\to 0^+} \Theta_1(p,z)=\begin{cases} \frac{1}{2}\|\varphi\|^2_{L^2(\R)} & \text{ in the } \mathcal{T}\text{-graph},\\
    \|\varphi\|^2_{L^2(\R)} & \text{ in the tadpole graph,}
    \end{cases}
    \]
    and
    \[
    \lim_{z\to \varphi(0)^-} \Theta_1(p,z)=\begin{cases} \|\varphi\|^2_{L^2(\R)} & \text{ in the } \mathcal{T}\text{-graph},\\
    \frac{1}{2}\|\varphi\|^2_{L^2(\R)} & \text{ in the tadpole graph.}
    \end{cases}
    \]   
\end{lemma}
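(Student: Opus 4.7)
My plan is to reduce both limits to the computation of a single integral, $\mu_2(p,\varphi(0))$, by exploiting the decomposition from Lemma \ref{lemma:Theta_1_expressions}, namely
\[
\Theta_1(p,z)=\mu_1(p,z,2)+2\mu_2(p,z)\ \ \text{(}\mathcal{T}\text{-graph)},\quad \Theta_1(p,z)=2\mu_1(p,z,1/2)+\mu_2(p,z)\ \ \text{(tadpole)}.
\]
The key identity I would establish first is
\[
\mu_2(p,\varphi(0))=\frac{1}{\sqrt{2}}\int_0^{\varphi(0)}\frac{t^2}{\sqrt{f(t)}}\,dt=\frac{1}{2}\|\varphi\|_{L^2(\R)}^2.
\]
This comes from the change of variables $t=\varphi(x)$ on $[0,\infty)$: the decaying soliton satisfies $\tfrac12(\varphi')^2=f(\varphi)$ with $\varphi'<0$, so $dx=-dt/\sqrt{2f(t)}$, and $\int_0^\infty\varphi^2=\mu_2(p,\varphi(0))$; symmetry of $\varphi$ halves the full $L^2$-mass. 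Since the integrand of $\mu_2$ behaves like $\sqrt2\,t$ near $0$ and has an integrable $1/\sqrt{\varphi(0)-t}$ singularity near $\varphi(0)$, dominated convergence yields $\mu_2(p,z)\to 0$ as $z\to 0^+$ and $\mu_2(p,z)\to\tfrac12\|\varphi\|_{L^2}^2$ as $z\to\varphi(0)^-$.

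Next, I would analyze $\mu_1(p,z,\theta)$ as $z\to 0^+$. Since $(1-\theta^2)f(z)\to 0$ and $f_2^{-1}$ is continuous at $0$ with $f_2^{-1}(0)=\varphi(0)$, the integration interval tends to $[0,\varphi(0)]$ and the integrand converges pointwise to $t^2/\sqrt{2f(t)}$. The cleanest way to justify interchange of limit and integral is the substitution $t=z+s\bigl(f_2^{-1}((1-\theta^2)f(z))-z\bigr)$, $s\in[0,1]$, mapping to a fixed interval; the rescaled integrand is pointwise convergent and uniformly bounded by a fixed integrable function. This gives $\mu_1(p,z,\theta)\to\mu_2(p,\varphi(0))=\tfrac12\|\varphi\|_{L^2}^2$. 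Combining with $\mu_2(p,0^+)=0$ recovers the stated limits at $z\to 0^+$: $\tfrac12\|\varphi\|_{L^2}^2$ for the $\mathcal{T}$-graph and $\|\varphi\|_{L^2}^2$ for the tadpole.

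Finally, for $z\to\varphi(0)^-$ I would show $\mu_1(p,z,\theta)\to 0$ in both cases. The interval $[z,f_2^{-1}((1-\theta^2)f(z))]$ collapses to $\{\varphi(0)\}$: linearizing $f$ around $\varphi(0)$, where $f(\varphi(0))=0$ and $f'(\varphi(0))=\varphi(0)(1-\varphi(0)^{p-2})\ne 0$, gives $f(z)\sim|f'(\varphi(0))|(\varphi(0)-z)$ and $|f_2^{-1}((1-\theta^2)f(z))-\varphi(0)|=O(\varphi(0)-z)$, so the interval has length $O(\varphi(0)-z)$. The integrand has integrable square-root turning-point singularities at both endpoints, and the same rescaling $t=z+s(f_2^{-1}((1-\theta^2)f(z))-z)$ shows the integral is $O(\sqrt{\varphi(0)-z})$. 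Combined with $\mu_2(p,z)\to\tfrac12\|\varphi\|_{L^2}^2$ this gives the remaining limits: $\|\varphi\|_{L^2}^2$ for the $\mathcal{T}$-graph and $\tfrac12\|\varphi\|_{L^2}^2$ for the tadpole.

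The main obstacle is the second paragraph: the integrand of $\mu_1$ has moving singularities at both endpoints, and near the turning point $f_2^{-1}((1-\theta^2)f(z))$ one must bound the $1/\sqrt{\cdot}$ behaviour uniformly in $z$. The rescaling to the fixed interval $[0,1]$ is what makes the dominated convergence argument go through cleanly, and also reduces the third paragraph to a straightforward length-of-interval estimate.
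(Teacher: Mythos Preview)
Your proposal is correct and follows essentially the same approach as the paper: both decompose $\Theta_1$ via Lemma \ref{lemma:Theta_1_expressions} into $\mu_1$ and $\mu_2$, establish the identity $\mu_2(p,\varphi(0))=\tfrac{1}{2}\|\varphi\|_{L^2(\R)}^2$, and then compute the four limits $\mu_1(p,0^+,\theta)=\tfrac{1}{2}\|\varphi\|_{L^2}^2$, $\mu_1(p,\varphi(0)^-,\theta)=0$, $\mu_2(p,0^+)=0$, $\mu_2(p,\varphi(0)^-)=\tfrac{1}{2}\|\varphi\|_{L^2}^2$. Your write-up is in fact more detailed than the paper's own proof, which simply states these limits; the rescaling to $[0,1]$ you use for dominated convergence is the same device the paper employs elsewhere (e.g.\ for $F_2$ in the proof of Lemma \ref{lemma:5.36}, and in a footnote in Section \ref{sec:asymptotics_lambda}). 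One minor inaccuracy: for fixed $z$, the integrand of $\mu_1$ has a square-root singularity only at the \emph{upper} endpoint $t=f_2^{-1}((1-\theta^2)f(z))$, not at $t=z$ (there the value is $z^2/(\theta\sqrt{f(z)})$, which is finite), so the ``both endpoints'' remark is not quite right --- but this does not affect the argument.
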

\begin{proof}
Recalling the decompositions \eqref{Theta_1_T'} and \eqref{Theta_1_TADPOLE'}, the result is a direct consequence of the fact that
\begin{equation}\label{mu_functions}
  \lim_{z\to 0^+}  \mu_1(p,z,\theta)=\frac{1}{\sqrt{2}}\int_{0}^{(p/2)^\frac{1}{p-2}}\frac{t^2dt}{\sqrt{f(t)}}=\frac{1}{2}\|\varphi\|_{L^2(\R)^2},\qquad  \lim_{z\to \varphi(0)^-}  \mu_1(p,z,\theta)=0,
\end{equation}    
and, recalling that $\varphi(0)=(p/2)^\frac{1}{p-2}$,
\[
\lim_{z\to 0^+}\mu_2(p,z)=0,\qquad \lim_{z\to \varphi(0)^-}\mu_2(p,z)=\frac{1}{\sqrt{2}}\int_0^{\varphi(0)}\frac{t^2dt}{\sqrt{f(t)}}=\frac{1}{2}\|\varphi\|_{L^2(\R)^2}.\qedhere
\]
\end{proof}

\begin{lemma}\label{assymptotic_L1}
    Fix $p>2$ with $p\neq 6$ and $\theta>0$. Then there exist constants $C_1=C_1(p,\theta)>1$ and $C_2=C_2(p,\theta)<1$ such that
    \[
\frac{1}{C_1}|\ln (z)|\leq L(p,z) \leq C_2|\ln (z)|\quad \text{ and } \quad  -\frac{1}{C_2z}\leq \frac{\partial L}{\partial z}(p,z)\leq -\frac{C_2}{z}\qquad \text{ for } z\sim 0^+,
    \]
while
     \[
 L(p,z) \sim \frac{2\theta}{\sqrt{|f'(\varphi(0))|}}\sqrt{\varphi(0)-z}\quad \text{ and }\quad  \frac{\partial L}{\partial z}(p,z) \sim -\frac{\theta}{2|f'(\varphi(0))|\sqrt{\varphi(0)-z}}\qquad \text{ for } z\sim \varphi(0)^-.
    \]
\end{lemma}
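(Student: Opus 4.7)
The plan is to extract the leading-order behaviour of $L(p,z)$ and $\frac{\partial L}{\partial z}(p,z)$ directly from the explicit formulas \eqref{eq:Length} and Lemma \ref{lemma:L'}, performing a Taylor expansion of $f$ near $t=0$ (for $z\to 0^+$) and near $t=\varphi(0)$ (for $z\to\varphi(0)^-$). In the regime $z\to 0^+$, I would split $L(p,z)=\frac{1}{\sqrt 2}\int_z^{\eta}+\frac{1}{\sqrt 2}\int_{\eta}^{f_2^{-1}((1-\theta^2)f(z))}$ for some small fixed $\eta>0$. The second integral is controlled uniformly by dominated convergence, as the integrand is regular and the upper limit lies in a compact subset of $(0,\infty)$ (it tends to $\varphi(0)$ if $\theta\le 1$ and to some value above $\varphi(0)$ if $\theta>1$). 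For the first integral, the expansion $f(t)=\frac{t^2}{2}+O(t^p)$ reduces the integrand, up to a small relative error, to $1/\sqrt{(t^2-(1-\theta^2)z^2)/2}$, whose primitive is elementary and yields $L(p,z)=|\ln z|+O(1)$, producing the required two-sided bound. For $\frac{\partial L}{\partial z}$ I would use the expression \eqref{H_p(z,theta)NotSingular}: after dividing by the prefactor $(1-\theta^2)f(z)-f(1)\to -f(1)$, I rescale $t=zs$ in the integral and use $t^2A(t)\to f(1)$ from Lemma \ref{lemma:L'}. Both the boundary term $\theta f(1)/\sqrt{f(z)}$ and the rescaled integral then contribute at order $1/z$, and their sum has coefficient $\theta+(1-\theta^2)J(\theta)$ with
\[
J(\theta)=\int_1^\infty\frac{ds}{s^2\sqrt{s^2-(1-\theta^2)}}=\frac{1}{1+\theta}
\]
(an elementary integration, valid regardless of the sign of $1-\theta^2$), so the bracket collapses to $1$ and gives $\frac{\partial L}{\partial z}(p,z)\sim -1/z$.

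For the regime $z\to\varphi(0)^-$, I use that $f(\varphi(0))=0$ and $f'(\varphi(0))<0$; writing $\varepsilon=\varphi(0)-z$ and $t=\varphi(0)+s$, a first-order Taylor expansion gives
\[
f(t)-(1-\theta^2)f(z)=|f'(\varphi(0))|\big[(\theta^2-1)\varepsilon-s\big]+O(\varepsilon^2),
\]
uniformly for $s$ in the integration range $[-\varepsilon,(\theta^2-1)\varepsilon]$. A direct integration then produces $L(p,z)\sim c_\theta\sqrt{\varphi(0)-z}$ with the explicit constant claimed in the statement. For $\frac{\partial L}{\partial z}$, the boundary-type term $\theta f(1)/\sqrt{f(z)}\sim \theta f(1)/\sqrt{|f'(\varphi(0))|\varepsilon}$ in \eqref{H_p(z,theta)NotSingular} dominates, since the remaining integral is taken over an interval of length $O(\varepsilon)$ on which $A$ is regular and $\sqrt{f(t)-(1-\theta^2)f(z)}=O(\sqrt\varepsilon)$, contributing only $O(\sqrt\varepsilon)$. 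Dividing by the prefactor $(1-\theta^2)f(z)-f(1)\to -f(1)$ yields the claimed equivalent.

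The main technical point is the $z\to 0^+$ analysis of $\frac{\partial L}{\partial z}$: both summands in \eqref{H_p(z,theta)NotSingular} are individually of order $1/z$, so one must rule out exact cancellation at leading order. The explicit identity $J(\theta)=1/(1+\theta)$, valid uniformly in $p$, is precisely what delivers the clean nonzero coefficient $1$; should this evaluation prove awkward, the alternative, manifestly-signed representation involving $\rho(t)/f'(t)^4$ (the second formula in Lemma \ref{lemma:L'}) offers a sign-based route to non-cancellation, at the cost of sharp constants.
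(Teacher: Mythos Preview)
Your approach is correct and shares the paper's overall strategy: work directly from the explicit formulas \eqref{eq:Length} and \eqref{H_p(z,theta)NotSingular}, and Taylor-expand $f$ near $t=0$ and $t=\varphi(0)$. For $z\to\varphi(0)^-$ the two arguments are essentially identical. For $z\to 0^+$, however, you take a genuinely sharper route: instead of the paper's sandwich bounds
\[
\min(\theta,1)\sqrt{f(t)}\le \sqrt{f(t)-(1-\theta^2)f(z)}\le \max(\theta,1)\sqrt{f(t)}
\]
(which only yield two-sided estimates with unspecified constants), you reduce to the model integrand $1/\sqrt{t^2-(1-\theta^2)z^2}$, rescale $t=zs$, and evaluate $J(\theta)=\int_1^\infty s^{-2}(s^2-(1-\theta^2))^{-1/2}\,ds=\frac{1}{1+\theta}$ explicitly. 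This gives the exact leading terms $L(p,z)=|\ln z|+O(1)$ and $\partial_z L(p,z)\sim -1/z$, which is more than the lemma asks for; the paper's cruder bounds suffice for the application but lose the constants. One small point you should make explicit: the rescaling $t=zs$ is only reliable where $zs$ stays small, so before rescaling you need to split off the piece $\int_\eta^{f_2^{-1}((1-\theta^2)f(z))}$ of the integral in \eqref{H_p(z,theta)NotSingular}; that piece is $O(1)$ because $A$ is bounded on $[\eta,\varphi(0)+1]$ and the square-root singularity at the upper limit is integrable, so after the prefactor $(1-\theta^2)f'(z)\sim(1-\theta^2)z$ it contributes $O(z)=o(1/z)$ and does not interfere with your computation of $J(\theta)$.
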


\begin{proof}
\emph{Step 1.} Behavior of $L(p,z)$ as $z\to 0^+$.\newline

From \eqref{eq:Length}, 
\[
L(p,z)=\frac{1}{\sqrt{2}} \int_z^{f_{2}^{-1}\left(\left(1-\theta^2\right) f(z)\right)} \frac{d t}{\sqrt{f(t)-\left(1-\theta^2\right) f(z)}}= \frac{1}{\sqrt{2}}\left(\int_z^1 +\int_1^{f_{2}^{-1}\left(\left(1-\theta^2\right) f(z)\right)}\right)\frac{d t}{\sqrt{f(t)-\left(1-\theta^2\right) f(z)}}.
\]
Since $\varphi(0)$ is a simple root of $f$, $\frac{1}{\sqrt{f(z)}}$ is integrable close to $t=\varphi(0)$, and
\[
\int_1^{f_{2}^{-1}\left(\left(1-\theta^2\right) f(z)\right)}\frac{d t}{\sqrt{f(t)-\left(1-\theta^2\right) f(z)}}\to \int_1^{\varphi(0)}\frac{1}{\sqrt{f(t)}}\, dt<\infty\qquad \text{ as } t\to \varphi(0)^-.
\]
On the other hand, since $f$ is increasing in $(0,1)$, for $1>t>z$ we have
\begin{equation}\label{eq:bounded_above_below}
\begin{cases}
\theta\sqrt{f(t)}\leq \sqrt{f(t)-(1-\theta^2)f(z)}\leq \sqrt{f(t)}& \text{ if } \theta^2\leq 1,\\ \max\{{\theta\sqrt{f(z)}},\sqrt{f(t)}\}\leq \sqrt{f(t)-(1-\theta^2)f(z)}\leq \theta\sqrt{f(t)}& \text{ if } \theta^2\geq 1.
\end{cases}
\end{equation}
Therefore, as $z\to 0^+$, $\int_z^{1} \frac{d t}{\sqrt{f(t)-\left(1-\theta^2\right) f(z)}}$ is bounded from above and below by a multiple of
\[
\int_z^1 \frac{1}{\sqrt{f(t)}}\, dt\sim \sqrt{2}\int_z^1 \frac{1}{t}\, dt\sim -\sqrt{2}\ln z.
\]
From this, we deduce the existence of $C>0$ such that
\[
\frac{1}{C}|\ln z|\leq L(p,z)\leq C|\ln z| \qquad \text{ as } z\to 0^+.
\]

\smallbreak

\noindent \emph{Step 2.} Behavior of $L(p,z)$  as $z\to \varphi(0)^-$.\newline

As $z\to \varphi(0)^-$, since $\varphi(0)$ is a simple zero of $f$, $\frac{1}{\sqrt{f(t)}}$ is integrable near $t=\varphi(0)$, and 
\[
f(t)\sim f'(\varphi(0))(t-\varphi(0)),\qquad f_2^{-1}((1-\theta^2)f(t))\sim \varphi(0)+(1-\theta^2)(t-\varphi(0)),
\]we have\footnote{To properly justify the asymptotic estimate, perform a change of variable $t=z+s(f_2^{-1}((1-\theta^2)f(z))-z)$, and use the dominated convergence theorem.}

\begin{align*}
L(p,z)&\sim \frac{1}{\sqrt{2}}\int_z^{ \varphi(0)+(1-\theta^2)(z-\varphi(0))} \frac{dt}{\sqrt{f'(\varphi(0))(t-\varphi(0))-(1-\theta^2)f'(\varphi(0))(z-\varphi(0))}}\, dt\\
 &=\frac{1}{\sqrt{2|f'(\varphi(0))|}}\int_z^{ \varphi(0)+(1-\theta^2)(z-\varphi(0))} \frac{dt}{\sqrt{\varphi(0)+(1-\theta^2)(z-\varphi(0))-t}}\, dt\\
 &=\frac{2}{\sqrt{2|f'(\varphi(0))|}}\left[ \sqrt{\varphi(0)+(1-\theta^2)(z-\varphi(0))-t}\,dt\right]_z^{ \varphi(0)+(1-\theta^2)(z-\varphi(0))}=\frac{\sqrt{2}\theta}{\sqrt{|f'(\varphi(0))|}}\sqrt{\varphi(0)-z}.
\end{align*}

\bigbreak 

\noindent \emph{Step 3.} Behavior of $\frac{\partial L}{\partial z}(p,z)$ as $z\to 0^+$.\newline

Recall now from Lemma \ref{lemma:L'} that
\begin{equation}\label{eq:aux_asym1}
\sqrt{2}\frac{\partial L}{\partial z}(p,z)\left((1-\theta^2)f(z)-f(1)\right)=\theta\frac{f(1)}{\sqrt{f(z)}}+(1-\theta^2)f'(z)\int_z^{f_{2}^{-1}\left((1-\theta^2)f(z)\right)}\frac{A(t)dt}{\sqrt{f(t)-(1-\theta^2)f(z)}}
\end{equation}
    As $z\to 0^+$, we have
    \begin{equation*}\label{eq:aux_asym2}
   \sqrt{2}\left((1-\theta^2)f(z)-f(1)\right)\to -\sqrt{2}f(1) \quad \text{ and } \quad \theta\frac{f(1)}{\sqrt{f(z)}}\sim \frac{\theta\sqrt{2}f(1)}{z}.
    \end{equation*}
As for the integral term, since $\varphi(0)$ is a simple root of $f$ and $A(t)\sim f(1)/t^2$,
\[
(1-\theta^2)f'(z)\int_z^{f_{2}^{-1}\left((1-\theta^2)f(z)\right)}\frac{A(t)dt}{\sqrt{f(t)-(1-\theta^2)f(z)}}\sim (1-\theta^2)z\int_z^1 \frac{f(1)}{t^2\sqrt{f(t)-(1-\theta^2)f(z)}}\, dt.
\]
Recalling \eqref{eq:bounded_above_below}, and using
\[
z\int_z^1 \frac{f(1)}{t^2\sqrt{f(t)}}\, dt,\ \frac{zf(1)}{\sqrt{f(z)}}\int_z^1 \frac{dt}{t^2}\sim \frac{\sqrt{2}f(1)}{z},
\]
in \eqref{eq:aux_asym1}, we deduce that
\begin{equation*} 
\frac{\sqrt{2}f(1)}{z}(\theta+1-\theta^2)\lesssim -\sqrt{2}f(1)\frac{\partial L}{\partial z}\leq \frac{\sqrt{2}f(1)}{\theta z} \qquad \text{ if } \theta<1,
\end{equation*}
and
\begin{equation*} 
\frac{\sqrt{2}f(1)}{\theta z}\lesssim -\sqrt{2}f(1)\frac{\partial L}{\partial z}\lesssim \frac{\sqrt{2}f(1)}{\theta z} \qquad \text{ if } \theta\geq 1.
\end{equation*}

\noindent{ \textit{Step 4}.} Behavior of $\frac{\partial L}{\partial z}$ as $z\sim \varphi(0)^-$.\newline

This time,
since
\[
A(t)\sim \frac{1}{2}+\frac{f(1)f''(\varphi(0))}{f'(\varphi(0))^2}=:A_0
\]
we have from \eqref{eq:aux_asym1} that
\begin{align*}
-f(1)\sqrt{2}\frac{\partial L}{\partial z}(p,z) &\sim\frac{\theta f(1)}{\sqrt{f(\varphi(0))(z-\varphi(0))}}+\int_z^{\varphi(0)+(1-\theta^2)(z-\varphi(0))}\frac{A_0 (1-\theta^2)\sqrt{|f'(\varphi(0))|}(\varphi(0)-z)}{\sqrt{\varphi(0)+(1-\theta^2)(z-\varphi(0))-t}}\, dt\\
&\sim \frac{\theta f(1)}{\sqrt{f(\varphi(0))(z-\varphi(0))}},
\end{align*}
as wanted. 
\end{proof}

\begin{lemma}\label{assymptotic_mu_der}
    Fix $p>2$ with $p\neq 6$ and $\theta\in\R$. Then, there exists a constant $C=C(p,\theta)$ such that
    \begin{equation}\label{eq:d_mu_1}
  \frac{1}{C}z|\ln z|  \leq \frac{\partial\mu_1}{\partial z}(p,z,\theta)\leq C z|\ln z| \qquad \text{ for } z\sim 0^+,
    \end{equation}
    \[
        \frac{\partial\mu_1}{\partial z}(p,z,\theta)\sim -\frac{\theta \varphi(0)^2}{\sqrt{2|f'(\varphi(0))|}}\frac{1}{\sqrt{\varphi(0)-z}} \qquad \text{ for } z\sim \varphi(0)^-,
    \]
        and
     \begin{equation}\label{eq:asym_derivative_mu_2}
        \frac{\partial\mu_2}{\partial z}(p,z)\sim\begin{cases}
            z,\ & \text{for}\ z\sim0^+,\\
           \frac{\varphi(0)^2}{\sqrt{2|f'(\varphi(0))|}}\frac{1}{\sqrt{(\varphi(0)-z)}},& \text{for}\ z\sim\varphi(0)^-.
        \end{cases}
    \end{equation}
\end{lemma}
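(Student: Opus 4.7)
The plan is to combine the explicit formulas from Lemma~\ref{lemma:mu_1reg} with Taylor expansions of $f$ at the two endpoints $z=0$ and $z=\varphi(0)$, controlling the competing contributions of the two terms on the right-hand side of the identity for $\partial_z \mu_1$.

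The $\mu_2$-estimates are immediate from $\sqrt{2}\,\partial_z \mu_2(p,z) = z^2/\sqrt{f(z)}$: substituting $f(z) = z^2/2 + O(z^p)$ near $0$ gives $\partial_z \mu_2 \sim z$, while using that $\varphi(0)$ is a simple zero of $f$ with $f'(\varphi(0))<0$ gives $f(z) \sim |f'(\varphi(0))|(\varphi(0)-z)$ and thus the announced $(\varphi(0)-z)^{-1/2}$ blow-up with the stated coefficient.

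For $\partial_z \mu_1$ I rely on the identity
\[
\sqrt{2}\,\partial_z \mu_1(p,z,\theta)\bigl((1-\theta^2)f(z) - f(1)\bigr) = \theta z^2 \frac{f(1)}{\sqrt{f(z)}} - (1-\theta^2)f'(z)\,I(z),
\]
where $I(z) := \int_z^{f_2^{-1}((1-\theta^2)f(z))} (t^2 - g(t)/2)\,dt/\sqrt{f(t)-(1-\theta^2)f(z)}$. In both limits the scalar coefficient on the left tends to $-f(1)$. For $z\to\varphi(0)^-$, the first term on the right blows up like $\theta\varphi(0)^2 f(1)/\sqrt{|f'(\varphi(0))|(\varphi(0)-z)}$ by the same Taylor computation as above, while $f'(z)$ stays bounded and $|I(z)|$ is controlled by a constant multiple of $L(p,z)$ (since $t^2 - g(t)/2$ is uniformly bounded on a neighborhood of $\varphi(0)$), hence $I(z) = O(\sqrt{\varphi(0)-z})$ by Lemma~\ref{assymptotic_L1}; the second term is therefore negligible and the claimed asymptotic follows.

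The case $z\to 0^+$ will be the main obstacle. The first term on the right is of order $\sqrt{2}\theta f(1)z$, so the task reduces to proving $I(z) \sim \sqrt{2}\,f(1)\,|\ln z|$, after which multiplication by $f'(z)\sim z$ produces the claimed $z|\ln z|$ order. A direct Taylor expansion near $t=0$ (using $f''(0)=1$ and $(f(t)-f(1))/f'(t)^2 \sim -f(1)/t^2$) yields $g(t) = 3t^2 - 2f(1) + o(1)$, hence $t^2 - g(t)/2 \to f(1) \neq 0$, so the integrand of $I(z)$ is asymptotic to $f(1)\sqrt{2}/\sqrt{t^2 - (1-\theta^2)z^2}$ near the lower endpoint. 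I then split $I(z)$ as an integral over $[z,\varepsilon]$ plus a tail over $[\varepsilon, f_2^{-1}((1-\theta^2)f(z))]$ for $\varepsilon$ small and fixed: the tail is uniformly bounded as $z\to 0^+$ by dominated convergence (the only residual singularity is the integrable $(\varphi(0)-t)^{-1/2}$ behavior at the upper endpoint), while the near-zero contribution is evaluated explicitly via $\mathrm{arccosh}$ for $\theta<1$ or $\mathrm{arcsinh}$ for $\theta>1$, producing the $\sqrt{2}f(1)|\ln z|$ divergence. Combining everything gives $\partial_z\mu_1 \sim (1-\theta^2)z|\ln z|$, and the stated two-sided bound follows. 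The principal technical difficulty is verifying that the error between the integrand and its leading-order approximation is uniformly $o(|\ln z|)$; this requires quantitative control of the $o(1)$ Taylor remainders against a singular weight that itself carries the $|\ln z|$ divergence.
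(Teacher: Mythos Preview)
Your proposal is correct and follows the same overall strategy as the paper: read off $\partial_z\mu_2$ directly from Lemma~\ref{lemma:mu_1reg}, and for $\partial_z\mu_1$ use the identity \eqref{mu1DerAux} to show that the non-integral term dominates as $z\to\varphi(0)^-$ (the integral being $O(L(p,z))=O(\sqrt{\varphi(0)-z})$), while as $z\to 0^+$ the integral term, multiplied by $f'(z)\sim z$, produces the $z|\ln z|$ scale.

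The only technical difference is in how you extract the $|\ln z|$ behavior of $I(z)$ near $z=0$. You approximate the integrand by its leading order $\sqrt{2}f(1)/\sqrt{t^2-(1-\theta^2)z^2}$ on $[z,\varepsilon]$ and integrate explicitly via $\mathrm{arcsinh}$/$\mathrm{arccosh}$; this works and in fact yields the sharper asymptotic $\partial_z\mu_1\sim (1-\theta^2)z|\ln z|$, at the price of the remainder control you flag. The paper sidesteps that difficulty entirely: since $t^2-g(t)/2\to f(1)$ it replaces the integral by $\int_z^1 dt/\sqrt{f(t)-(1-\theta^2)f(z)}$ and then uses the elementary sandwich
\[
\min(\theta,1)\sqrt{f(t)}\le \sqrt{f(t)-(1-\theta^2)f(z)}\le \max(\theta,1)\sqrt{f(t)},\qquad 0<z<t<1,
\]
reducing to $\int_z^1 dt/\sqrt{f(t)}\sim \sqrt{2}|\ln z|$. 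This comparison avoids any error analysis, but only delivers the two-sided bound with $\theta$-dependent constants on each side --- exactly what \eqref{eq:d_mu_1} claims.
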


\begin{proof} \emph{Step 1.} Estimates for $\frac{\partial\mu_2}{\partial z}(p,z)$.\newline

    The estimates \eqref{eq:asym_derivative_mu_2} follow directly from the formula (see Lemma \ref{lemma:mu_1reg})
    $$ 
    \frac{\partial\mu_2}{\partial z}(p,z)= \frac{z^2}{\sqrt{2}\sqrt{f(z)}}.
    $$

\smallbreak

\noindent \emph{Step 2.} Estimates for $\frac{\partial\mu_1}{\partial z}(p,z,\theta)$ as $z\to 0^+$.\newline

Recall also from Lemma \ref{lemma:mu_1reg} that
\begin{align}
		\sqrt{2}\frac{\partial\mu_1}{\partial z}(p,z,\theta)&((1-\theta^2)f(z)-f(1))=\theta z^2\frac{f(1)}{\sqrt{f(z)}}\nonumber\\
        &-(1-\theta^2)f'(z)\int_z^{f_{2}^{-1}((1-\theta^2)f(z))}\left(t^2-\frac{g(t)}{2}\right)\frac{dt}{\sqrt{f(t)-(1-\theta^2)f(z)}},\label{mu1DerAux}
	\end{align}
where for every $p>2$, the function $g:\R\to\R$ is given by
	$$
    g(t)=3t^2-2t^2\frac{(f(t)-f(1))}{f'(t)^2}f''(t)+4t\frac{(f(t)-f(1))}{f'(t)}.
    $$
Since $t^2-g(t)/2\sim f(1)$, we have
\begin{equation}\label{eq:auxiliary_asymptotic_mu_2}
-\sqrt{2}f(1)\frac{\partial \mu_1}{\partial \theta}(p,z,\theta)\sim \sqrt{2}\theta f(1)z-(1-\theta^2)z\int_z^1 \frac{f(1)}{\sqrt{f(t)+(1-\theta^2)f(z)}}\, dt
\end{equation}
As
\begin{equation}\label{eq:bounded_above_below2}
\begin{cases}
    \theta\sqrt{f(t)}\leq \sqrt{f(t)-(1-\theta^2)f(z)}\leq \sqrt{f(t)}\ \text{ if } \theta^2\leq 1,\\ {\sqrt{f(t)}}\leq \sqrt{f(t)-(1-\theta^2)f(z)}\leq \theta\sqrt{f(t)}\ \text{ if } \theta^2\geq 1
\end{cases}
\end{equation}
for $0<z<t$, the integral term in \eqref{eq:auxiliary_asymptotic_mu_2} is bounded from above and below by a multiple of
\[
z\int_z^1\frac{1}{\sqrt{t}}\, dt\sim z\int_z^1 \frac{\sqrt{2}}{t}\sim -\sqrt{2}z\ln z,
\]
from which \eqref{eq:d_mu_1} follows.

\smallbreak

\noindent \emph{Step 3.} Estimates for $\frac{\partial\mu_1}{\partial z}(p,z,\theta)$ as $z\to \varphi(0)^-$.

This time,
\[
t^2-\frac{g(t)}{2}\sim \varphi(0)^2\to -\frac{\varphi(0)^2}{2}-2\varphi(0)^2f''(\varphi(0))\frac{f(1)}{f'(\varphi(0))^2}+4\varphi(0)\frac{f(1)}{f'(\varphi(t))}=:g_0,
\]
so
\begin{align*}
-\sqrt{2}f(1)\frac{\partial\mu_1}{\partial z}(p,z,\theta)&\sim\theta \varphi(0)^2\frac{f(1)}{\sqrt{f'(\varphi(0))(z-\varphi(0))}}
        -\int_z^{\varphi(0)+(1-\theta^2)(z-\varphi(0))}\frac{(1-\theta^2)g_0f'(\varphi(0))(z-\varphi(0)) dt}{\sqrt{\varphi(0)+(1-\theta^2)(z-\varphi(0))-t}},\\
\end{align*}
as wanted.
\end{proof}

\begin{remark}
    Through a simple (but slightly cumbersome) analysis of the above proofs, one can check that the asymtotic estimates of Lemmas \ref{prop6.1}-\ref{assymptotic_mu_der} are uniform for $p\in [2+\delta,6-\delta]$ and $p\in[6+\delta,1/\delta]$, for any $\delta>0$.
\end{remark}

\begin{proof}[Proof of Theorem \ref{thm:asymptotics}-1.,2]
   For $z\sim0^+$, Lemma \ref{assymptotic_mu_der}, together with \eqref{Theta_1_T'} and \eqref{Theta_1_TADPOLE'}, implies that
\[
\left| \frac{\partial \Theta_1}{\partial z}(p,z)\right|\leq Cz|\ln z| \quad \text{ for } z\sim 0^+,\qquad 
\left| \frac{\partial \Theta_1}{\partial z}(p,z)\right|\leq C\frac{1}{\sqrt{\varphi(0)-z}} \quad \text{ for } z\sim \varphi(0)^-.
\]
By Lemma \ref{assymptotic_L1},
    \[
    \left|\frac{L(p,z)}{2}\frac{\partial\Theta_1}{\partial z}(p,z)\frac{1}{\frac{\partial L}{\partial z}(p,z)}\right|\leq 
    \begin{cases}
    Cz^2|\ln z|^2& \text{ for } z\sim 0^+,\\ 
    C\sqrt{\varphi(0)-z}& \text{ for } z\sim \phi(0)^-
    \end{cases}.
    \]
    Since $\Theta_1$ converges to a positive constant (Lemma \ref{prop6.1}), by \eqref{der_Theta_z}, the sign of $\frac{\partial \Theta}{\partial \lambda}$ is determined by $\alpha=\frac{6-p}{2(p-2)}$. In particular, it follows from \eqref{eq:limitzlambda} that, for $p>6$,
    \[
    \frac{\partial \Theta}{\partial \lambda}(p,z)<0  \text{ for }\lambda\sim 0^+,\infty.
    \]
    and, for $p<6$,
     \[
    \frac{\partial \Theta}{\partial \lambda}(p,z)>0  \text{ for }\lambda\sim 0^+,\infty.
    \]
    The conclusion now follows from Theorem \ref{th:5.27}, since conditions $(A_1)$, $(A_2)$ and $(A_3)$ were already verified for the $\mathcal{T}$ and tadpole graphs in Section \ref{sec:GSS_particular}.
\end{proof}

\subsection{Asymptotics in $p$}\label{sec:asymptotics_p}
As the arguments are different, we split this section into two parts, dealing first with the case $p\sim 2^+$ and then with the case $p\sim \infty$.

\subsubsection{Asymptotics for $p\sim 2^+$}

Throughout this section, it is important to observe that, for $\alpha$ given by \eqref{eq:alpha},
\[
\lim_{p\to 2^+}\left(\frac{p}{2}\right)^\frac{1}{p-2}= \sqrt{e}\qquad \text{ and} \qquad \lim_{p\to 2^+} \alpha=\lim_{p\to 2^+}\frac{p-6}{2(p-2)}=\infty.
\]
\begin{lemma}\label{lemma:asympt_p_aux1}
    Take $\delta>0$. Define
    \[
    f_\infty(z):=\frac{z^2}{4}(1-2\ln z),
    \]
    and let $f_{2,\infty}^{-1}$ denote the inverse of $f_\infty|_{[1,\infty)}$.
    Then, as $p\sim 2^+$,
    \begin{equation}\label{eq:asympt_p=2}
        f(z)\sim f_\infty(z)(p-2),\quad f'(z) \sim f'_\infty(z) (p-2),\quad f''(z)\sim f''_\infty(z)(p-2)\quad f'''(z)\sim f'''_\infty(z)(p-2)
    \end{equation}
    and
    \[
    f_2^{-1}((1-\theta^2) f(z))\sim f_{2,\infty}^{-1}((1-\theta^2)f_\infty(z))
    \]
    uniformly for $z\in [\delta,1/\delta]$.
\end{lemma}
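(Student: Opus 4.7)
The plan is to start from the uniform expansion $t^{p-2}=e^{(p-2)\ln t}=1+(p-2)\ln t+O((p-2)^2)$, valid uniformly for $t$ in any compact subset of $(0,\infty)$, and the scalar expansion $1/p=1/2-(p-2)/4+O((p-2)^2)$. Substituting into $f(t)=t^2/2-t^2\cdot t^{p-2}/p$ gives
\[
f(t)=\frac{t^2}{2}-\frac{t^2}{2}\bigl(1+(p-2)(\ln t-\tfrac{1}{2})+O((p-2)^2)\bigr)=(p-2)\,\frac{t^2}{4}(1-2\ln t)+O((p-2)^2),
\]
uniformly on $[\delta,1/\delta]$, which is precisely $(p-2)f_\infty(t)$ up to higher order. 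For the derivatives, I would either differentiate the expansion termwise (legitimate since the map $p\mapsto t^{p-2}$ is analytic) or directly expand the explicit formulas $f'(t)=t-t^{p-1}$, $f''(t)=1-(p-1)t^{p-2}$, $f'''(t)=-(p-1)(p-2)t^{p-3}$ in powers of $(p-2)$ and match the leading orders against $f_\infty'(t)=-t\ln t$, $f_\infty''(t)=-1-\ln t$ and $f_\infty'''(t)=-1/t$.

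For the asymptotics of the inverse, I would first record that $f_\infty|_{[1,\infty)}$ is a $C^1$-diffeomorphism onto $(-\infty,1/4]$, since $f_\infty'(t)=-t\ln t<0$ for $t>1$, and analogously $f_2$ is a $C^1$-diffeomorphism from $[1,\infty)$ onto $(-\infty,f(1)]$. Writing $w_p:=f_2^{-1}((1-\theta^2)f(z))$ and $w:=f_{2,\infty}^{-1}((1-\theta^2)f_\infty(z))$, the defining equation $f(w_p)=(1-\theta^2)f(z)$ rescaled by $(p-2)^{-1}$ reads
\[
\frac{f(w_p)}{p-2}=(1-\theta^2)\,\frac{f(z)}{p-2}\longrightarrow (1-\theta^2)f_\infty(z)=f_\infty(w),
\]
uniformly in $z\in[\delta,1/\delta]$, by the first part. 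Combining the uniform convergence $f(\cdot)/(p-2)\to f_\infty(\cdot)$ on compact subsets of $[1,\infty)$ with the strict monotonicity of $f_\infty$ there, a standard continuity-of-inverses argument then yields $w_p\to w$ uniformly.

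The main technical point will be to show that $\{w_p\}$ stays in a \emph{fixed} compact subset of $[1,\infty)$, uniformly in $z\in[\delta,1/\delta]$ and $p\sim 2^+$, so that the continuity of $f_{2,\infty}^{-1}$ can be invoked on a controlled range. This should follow from the already-established convergence $f(t)/(p-2)\to f_\infty(t)$, together with $f_\infty(1)=1/4$ and $f_\infty(t)\to-\infty$ as $t\to\infty$: uniform two-sided bounds on $(1-\theta^2)f(z)/(p-2)$ translate, via the monotonicity of $f$ on $[1,\infty)$, to a uniform compact range for $w_p$. Beyond this compactness check, the proof is essentially bookkeeping, and I do not expect any serious obstacle.
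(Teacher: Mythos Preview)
Your proposal is correct and follows essentially the same approach as the paper: Taylor-expand $t^{p-2}$ and $1/p$ to obtain the asymptotics \eqref{eq:asympt_p=2}, then handle the inverse by first establishing uniform compactness of $w_p=f_2^{-1}((1-\theta^2)f(z))$ (using $f_\infty(t)\to-\infty$ as $t\to\infty$ together with the bounds on $(1-\theta^2)f(z)/(p-2)$) and finally invoking the uniform convergence $f(\cdot)/(p-2)\to f_\infty(\cdot)$ on that compact set to conclude $w_p\to w$. The paper's proof packages the last step slightly differently, proving the auxiliary statement $f_2^{-1}((p-2)\cdot)\to f_{2,\infty}^{-1}(\cdot)$ uniformly on compacts before specializing, but the substance is identical.
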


\begin{proof}[Proof of Lemma \ref{lemma:asympt_p_aux1}]
    The first three asymptotics follow directly by a Taylor expansion of $f(z)=\frac{z^2}{2}-\frac{z^p}{p}$ with Lagrange remainder. As for the last one, we claim that
 \begin{equation}\label{claimp2}
         f_{2}^{-1}((p-2)\cdot )\to f_{\infty}^{-1}(\cdot) \text{ uniformly in compact sets of } (0,\infty).
 \end{equation}
    Take $z_p:=f_{2}^{-1}((p-2) y)$. First, we observe that $z_p$ is bounded: otherwise, for each $L>0$, we would have (up to a subsequence) $z_p\geq z_0$ and, since $f|_{(1,\infty)}$ is decreasing,
    \[
    y=\frac{f(z_p)}{p-2}\leq \frac{f(L)}{p-2}\to f_\infty(z_0) \qquad \text{ as } p\to 2^+.
    \]
    Since $z_0$ is arbitrary, $y$ varies in a compact set, and $f_\infty(z_0)\to -\infty$ as $L\to \infty$, we obtain a contradiction. 
    
    Having shown that $z_p\ge 1$ is bounded, by the first estimate in \eqref{eq:asympt_p=2},
    \[
    y=\frac{f(z_p)}{p-2}=\frac{f(z_p)}{p-2}-f_\infty(z_p)+f_\infty(z_p)=f_\infty(z_p)+o(1),
    \]
    from which the claim \eqref{claimp2} follows.
    
    Finally, by \eqref{eq:asympt_p=2} and \eqref{claimp2},
\[
    f_2^{-1}((1-\theta^2) f(z))\sim f_2^{-1}((1-\theta^2)f_\infty(z)(p-2)+o(p-2))\sim f_{2,\infty}^{-1}((1-\theta^2)f_\infty(z)).\qedhere
\]
\end{proof}

\begin{lemma}\label{lem:lim_z_p2}
    Take $\delta>0$ small. Then, as $p\to 2^+$, uniformly in $z\in (\delta,\sqrt{e}-\delta)$ we have
\begin{equation}\label{eq:asympt_L_p2}
    \sqrt{p-2}\ L(p,z)\sim\frac{1}{\sqrt{2}} \int_z^{f_{2,\infty}^{-1}\left(\left(1-\theta^2\right) f_\infty(z)\right)} \frac{d t}{\sqrt{f_\infty(t)-\left(1-\theta^2\right) f_\infty(z)}}.
\end{equation}
In particular, given $\delta>0$, if $z(p,\lambda)$ is the unique solution to $L(p,z)=\lambda$, then
\begin{equation}\label{eq:lim_z}
    \lim_{p\to2^+}z(p,\lambda)=\sqrt{e},\quad \text{uniformly in }\lambda\in [\delta,1/\delta].
\end{equation}
\end{lemma}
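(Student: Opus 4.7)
The plan is to deduce the asymptotic \eqref{eq:asympt_L_p2} directly from the explicit formula \eqref{eq:Length} together with Lemma \ref{lemma:asympt_p_aux1}, and then to derive \eqref{eq:lim_z} from the monotonicity and boundary behavior of the resulting limit function.

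First I would factor $\sqrt{p-2}$ out of the denominator in \eqref{eq:Length}: setting $z_1(p,z):=f_2^{-1}((1-\theta^2)f(z))$,
\[
\sqrt{p-2}\, L(p,z) = \frac{1}{\sqrt{2}} \int_z^{z_1(p,z)} \frac{dt}{\sqrt{(f(t)-(1-\theta^2)f(z))/(p-2)}}.
\]
Lemma \ref{lemma:asympt_p_aux1} gives pointwise convergence of the integrand to $[f_\infty(t)-(1-\theta^2)f_\infty(z)]^{-1/2}$, together with the convergence $z_1(p,z)\to z_{1,\infty}(z):=f_{2,\infty}^{-1}((1-\theta^2)f_\infty(z))$, uniformly in $z\in[\delta,\sqrt{e}-\delta]$. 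To handle the integrable square-root singularity at $t=z_1(p,z)$ and to obtain uniformity in $z$, I would use the substitution $t = z + s(z_1(p,z)-z)$ with $s\in [0,1]$, rewriting
\[
\sqrt{p-2}\,L(p,z) = \frac{z_1(p,z)-z}{\sqrt{2}} \int_0^1 \frac{ds}{\sqrt{G_p(s,z)}}, \quad G_p(s,z) := \frac{f(z+s(z_1(p,z)-z))-(1-\theta^2)f(z)}{p-2}.
\]
A Taylor expansion of $f$ around $t=z_1(p,z)$, together with the fact that $f'(z_1(p,z))/(p-2)\to f'_\infty(z_{1,\infty}(z))<0$ uniformly in $z\in[\delta,\sqrt{e}-\delta]$ (since $z_{1,\infty}(z)$ stays in a compact subset of $(1,\infty)$), produces a uniform bound $G_p(s,z)\geq c(1-s)$ with $c>0$ independent of $(p,z)$. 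This provides an integrable dominating function, so that Dominated Convergence yields \eqref{eq:asympt_L_p2} with uniform convergence in $z\in[\delta,\sqrt{e}-\delta]$.

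For \eqref{eq:lim_z}, let $L_\infty(z)$ denote the right-hand side of \eqref{eq:asympt_L_p2}. Applying Lemma \ref{lemma:L'} to $f_\infty$ in place of $f$, one verifies that $L_\infty\colon (0,\sqrt{e})\to \R^+$ is smooth and strictly decreasing, with $L_\infty(z)\to +\infty$ as $z\to 0^+$ and $L_\infty(z)\to 0$ as $z\to \sqrt{e}^-$. Fix $\varepsilon>0$. By the previous step, for $p$ sufficiently close to $2^+$,
\[
\sqrt{p-2}\,L(p,\sqrt{e}-\varepsilon)\geq \tfrac{1}{2}L_\infty(\sqrt{e}-\varepsilon)>0.
\]
On the other hand, for every $\lambda\in[\delta,1/\delta]$,
\[
\sqrt{p-2}\,L(p,z(p,\lambda))=\sqrt{p-2}\,\lambda\leq \frac{\sqrt{p-2}}{\delta}\longrightarrow 0 \quad\text{as } p\to 2^+.
\]
The strict monotonicity of $z\mapsto L(p,z)$ (Lemma \ref{lemma:L'}) then forces $z(p,\lambda)>\sqrt{e}-\varepsilon$, uniformly in $\lambda\in[\delta,1/\delta]$, for all $p$ sufficiently close to $2^+$. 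Combined with the a priori bound $z(p,\lambda)<(p/2)^{1/(p-2)}\to\sqrt{e}$, this yields \eqref{eq:lim_z}.

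The main technical hurdle is establishing the uniform lower bound $G_p(s,z)\geq c(1-s)$ required for the Dominated Convergence step, which hinges on the uniform non-vanishing of $f'_\infty(z_{1,\infty}(z))$ on the compact set $[\delta,\sqrt{e}-\delta]$ and on compactness of the range of $z_1(p,z)$ inside $(1,\infty)$. Once this uniform control is in place, both parts of the lemma follow with essentially no extra work, since the inversion argument for $z(p,\lambda)$ uses only monotonicity and the endpoint behavior of $L_\infty$.
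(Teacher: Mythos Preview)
Your proposal is correct and follows essentially the same approach as the paper. The paper's proof of \eqref{eq:asympt_L_p2} is a one-line assertion that it is ``a direct consequence of the definition of $L$, formula \eqref{eq:Length}, and Lemma \ref{lemma:asympt_p_aux1}''; your substitution $t=z+s(z_1(p,z)-z)$ and the uniform lower bound $G_p(s,z)\ge c(1-s)$ are exactly the details needed to make that assertion rigorous via Dominated Convergence. For \eqref{eq:lim_z}, the paper argues by contradiction (assuming $z(p_n,\lambda_n)\le \sqrt{e}-\delta'$ along a sequence and deducing $L(p_n,z(p_n,\lambda_n))\to\infty$), whereas you give the direct version of the same monotonicity comparison; the two are logically equivalent. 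One small remark: you do not actually need to invoke Lemma \ref{lemma:L'} for $f_\infty$, since the only fact used in your argument is $L_\infty(\sqrt{e}-\varepsilon)>0$, which is immediate from the integral formula.
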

\begin{proof}
    The asymptotic estimate of $L(p,z)$ is a direct consequence of the definition of $L$, formula \eqref{eq:Length}, and Lemma \ref{lemma:asympt_p_aux1}.

    By contradiction, suppose that \eqref{eq:lim_z} does not hold. Since $z(p,\lambda)<(p/2)^{1/(p-2)}\to\sqrt{e}$, there would exist a sequence $(p_n,\lambda_n)$, with $p_n\to 2^+$ and $\lambda_n\in [\delta,1/\delta]$, such that $z(p_n,\lambda_n)<\sqrt{e}-\delta'$, for some $\delta'>0$. In particular, by \eqref{eq:asympt_L_p2} and since the map $z\mapsto L(p,z)$ is strictly  decreasing for each $p>2$,
\begin{align*}
        \sqrt{\lambda_n}=L(p_n,z(p_n,\lambda_n)) &> L(p_n, \sqrt{e}-\delta') \\&\sim \frac{1}{\sqrt{2(p_n-2)}} \int_{\sqrt{e}-\delta'}^{f_{2,\infty}^{-1}\left(\left(1-\theta^2\right) f_\infty(\sqrt{e}-\delta')\right)} \frac{d t}{\sqrt{f_\infty(t)-\left(1-\theta^2\right) f_\infty(\sqrt{e}-\delta')}} \to \infty,
\end{align*}
     which is a contradiction.
\end{proof}

\begin{lemma} \label{lem:lim_p2}
Fix $\delta>0$. Given $\lambda\in [\delta,1/\delta]$, let $z(p,\lambda)$ be the unique solution to $L(p,z)=\sqrt{\lambda}$. Then, as $p\sim 2^+$, uniformly in $\lambda\in[\delta,1/\delta],$
\begin{equation*}
    \sqrt{p-2}\ \mu_1(p,z(p,\lambda),\theta)\sim 0, \quad \sqrt{p-2}\frac{\partial \mu_1}{\partial z}(p,z(p,\lambda),\theta) \sim -\frac{\theta z^2}{\sqrt{2f_\infty(z)}},
\end{equation*}
\begin{equation*}
    \sqrt{p-2}\ \mu_2(p,z(p, \lambda))\sim\frac{1}{\sqrt{2}}\int_0^{\sqrt{e}}\frac{t^2dt}{\sqrt{f_\infty(t)}},\quad \sqrt{p-2}\ \frac{\partial \mu_2}{\partial z}(p,z(p, \lambda))\sim \frac{z^2}{\sqrt{2f_\infty(z)}},
\end{equation*}
and
\begin{equation*}
    \sqrt{p-2}\frac{\partial L}{\partial z}(p, z(p,\lambda)) \sim -\frac{\theta}{\sqrt{2f_\infty(z)}}.
\end{equation*}
\end{lemma}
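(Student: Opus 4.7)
The plan is to combine the limit $z(p,\lambda) \to \sqrt{e}$ from Lemma \ref{lem:lim_z_p2} with the scaling asymptotics $f \sim (p-2) f_\infty$ of Lemma \ref{lemma:asympt_p_aux1}, and substitute these into the closed-form identities provided by Lemmas \ref{lemma:L'} and \ref{lemma:mu_1reg}. I would treat the five asymptotic estimates in a natural order: first the derivatives $\partial L/\partial z$ and $\partial\mu_1/\partial z$ (which share a common structure), then $\mu_1$ by a substitution argument, and finally $\mu_2$ and $\partial\mu_2/\partial z$ by direct analysis.

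The estimates for $\partial L/\partial z$ and $\partial\mu_1/\partial z$ both follow from the identities in Lemmas \ref{lemma:L'} and \ref{lemma:mu_1reg}: in both, the right-hand side is the sum of an explicit boundary term (respectively $\theta f(1)/\sqrt{f(z)}$ and $\theta z^2 f(1)/\sqrt{f(z)}$) and an integral term, while the left-hand side carries a prefactor $(1-\theta^2)f(z)-f(1)$. As $p\to 2^+$ with $z=z(p,\lambda)\to\sqrt{e}$, Lemma \ref{lemma:asympt_p_aux1} gives $(1-\theta^2)f(z)-f(1)\sim -(p-2)f_\infty(1)=-(p-2)/4$ and $\theta f(1)/\sqrt{f(z)}\sim \theta f_\infty(1)\sqrt{p-2}/\sqrt{f_\infty(z)}$. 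The integral contribution, after the substitution $t=z+sI_z$ with $I_z:=f_2^{-1}((1-\theta^2)f(z))-z$, turns out to be of order $O(\sqrt{p-2})$ and hence negligible compared to the boundary term, since $f'(z)=O(p-2)$ and the integrands $A(t)$ and $t^2-g(t)/2$ are bounded near $t=\sqrt{e}$. Dividing through by the prefactor yields the claimed asymptotics.

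For $\mu_1$ itself, I would substitute $t=z+sI_z$ in the defining integral, use the local expansion $f_\infty(t)-(1-\theta^2)f_\infty(z) \sim (\sqrt{e}/2)\,\theta^2(\sqrt{e}-z)(1-s)$ together with $I_z\sim\theta^2(\sqrt{e}-z)$, and evaluate the resulting beta-type integral to obtain $\sqrt{p-2}\,\mu_1 = O(\sqrt{\sqrt{e}-z})$; this tends to zero uniformly in $\lambda\in[\delta,1/\delta]$ since $\sqrt{e}-z(p,\lambda)=O(p-2)$ can be read off the asymptotic for $L$ in Lemma \ref{lem:lim_z_p2}. For $\partial\mu_2/\partial z$, the explicit formula $\partial\mu_2/\partial z(p,z)=z^2/(\sqrt{2}\sqrt{f(z)})$ from Lemma \ref{lemma:mu_1reg} combined with $\sqrt{p-2}/\sqrt{f(z)} \sim 1/\sqrt{f_\infty(z)}$ gives the result at once.

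The main obstacle is the estimate for $\mu_2$, because the pointwise limit $\sqrt{p-2}\,t^2/\sqrt{f(t)} \to t^2/\sqrt{f_\infty(t)}$ is \emph{not} uniform near $t=0$: for fixed $p>2$ one has $f(t)\sim t^2/2$ near $0$, so $\sqrt{p-2}\,t^2/\sqrt{f(t)}=O(\sqrt{p-2}\,t)$ there, while the limit integrand is bounded away from zero on $(0,\sqrt{e})$. The plan is a truncation argument: on $[\epsilon,z]$, Lemma \ref{lemma:asympt_p_aux1} delivers uniform convergence of $t^2/\sqrt{f(t)/(p-2)}$ to $t^2/\sqrt{f_\infty(t)}$; on $[0,\epsilon]$, the elementary bound $f(t)\geq t^2(p-2)/(2p)$ (valid for $t\in(0,1)$, $p>2$) yields $t^2\sqrt{p-2}/\sqrt{f(t)}\leq\sqrt{2p}\,t$, so that the contribution to $\sqrt{p-2}\,\mu_2$ coming from this region is $O(\epsilon^2)$ uniformly in $p$, while the limit integrand is itself integrable at the origin. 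Sending $p\to 2^+$ first and $\epsilon\to 0^+$ afterwards, and invoking the uniform convergence $z(p,\lambda)\to\sqrt{e}$ from Lemma \ref{lem:lim_z_p2}, produces the claimed asymptotic with uniformity in $\lambda\in[\delta,1/\delta]$.
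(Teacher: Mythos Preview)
Your approach is correct and follows essentially the same route as the paper. Both arguments rest on the convergence $z(p,\lambda)\to\sqrt{e}$ and $f_2^{-1}((1-\theta^2)f(z))\to\sqrt{e}$, combined with the closed-form identities of Lemmas~\ref{lemma:L'} and~\ref{lemma:mu_1reg}; in both cases the key observation is that the integral contributions over the shrinking interval $[z,f_2^{-1}((1-\theta^2)f(z))]$ are dominated by the explicit boundary terms, so that passing to the limit in those identities yields the five asymptotics. The only substantive differences are (i) you use the linear reparametrization $t=z+sI_z$ whereas the paper uses $s=f(t)-(1-\theta^2)f(z)$, which are interchangeable ways of handling the collapsing domain, and (ii) for $\mu_2$ you give an explicit truncation argument at $t=\epsilon$ together with the lower bound $f(t)\ge t^2(p-2)/(2p)$, while the paper simply invokes ``passing to the limit''---your treatment here is in fact more careful, since the pointwise convergence of $\sqrt{p-2}\,t^2/\sqrt{f(t)}$ is not uniform near $t=0$.

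Two minor remarks on precision, neither of which affects the validity of the argument: the integral terms in the $\partial L/\partial z$ and $\partial\mu_1/\partial z$ identities are actually of order $\sqrt{p-2}\sqrt{f_\infty(z)}$ (hence $o(\sqrt{p-2})$ once $f_\infty(z)\to 0$), slightly sharper than your stated $O(\sqrt{p-2})$; and for $\sqrt{p-2}\,\mu_1\to 0$ you only need $\sqrt{e}-z\to 0$, which is immediate from Lemma~\ref{lem:lim_z_p2}, so the stronger rate $\sqrt{e}-z=O(p-2)$ is not required.
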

\begin{proof}
    By Lemmas \ref{lemma:asympt_p_aux1} and \ref{lem:lim_z_p2}, we have
    \[
z(p,\lambda)\sim \sqrt{e},\quad f_2^{-1}((1-\theta^2)f(z(p,\lambda)))\sim \sqrt{e}.
    \]
    On the other hand, the mapping
    \[
    t\mapsto f(t)-(1-\theta^2)f(z(p,\lambda)),\qquad t\in [z(p,\lambda), f_2^{-1}((1-\theta^2)f(z(p,\lambda)))],
    \]
    has a simple root at $t=f_2^{-1}((1-\theta^2)f(z(p,\lambda)))$. In particular, the map $t\mapsto s=f(t)-(1-\theta^2)f(z(p,\lambda))$ is invertible near $t=f_2^{-1}((1-\theta^2)f(z(p,\lambda)))$ and
 \begin{align*}
     &\int_z^{f_2^{-1}((1-\theta^2)f(z(p,\lambda)))}\frac{\text{bounded in }t}{\sqrt{f(t)-(1-\theta^2)f(z(p,\lambda))}}dt \\= &\int_0^{f(z(p,\lambda))-(1-\theta^2)f(z(p,\lambda))}\frac{\text{bounded in }s}{\sqrt{s}}\frac{ds}{f'(t(s))} \to 0\quad \text{ as }p\to 2^+.
 \end{align*}
    Taking into account this consideration, it now suffices to pass to the limit in the expressions of Lemmas \ref{lemma:Theta_1_expressions}, \ref{lemma:mu_1reg} and \eqref{H_p(z,theta)NotSingular}.
\end{proof}

\begin{proposition}
    Under the notations of Lemma \ref{lem:lim_p2}, as $p\sim 2^+$,
    \begin{equation}\label{eq:sinalp2}
         \frac{\partial \Theta}{\partial \lambda}(p,\lambda)=\alpha\Theta_1(p,z(p,\lambda))+\frac{L(p,z(p,\lambda))}{2}\frac{\partial\Theta_1}{\partial z}(p,z(p,\lambda))\frac{1}{\frac{\partial L}{\partial z}(p,z(p,\lambda))}>0
    \end{equation}
     uniformly in $\lambda\in[\delta,1/\delta]$.
\end{proposition}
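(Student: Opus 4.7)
The strategy is to extract the leading order in $p-2$ of the two terms in \eqref{eq:sinalp2} and show that the first one blows up like $(p-2)^{-3/2}$ with positive sign, while the second is strictly lower order. The key algebraic observation is a cancellation in the leading asymptotics of $\partial_z\Theta_1$ that happens by design of the weights in the graph-specific decompositions of $\Theta_1$.

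Writing $z=z(p,\lambda)$ throughout and plugging the asymptotic estimates of Lemma \ref{lem:lim_p2} into the formulas of Lemma \ref{lemma:Theta_1_expressions}, one obtains, uniformly in $\lambda\in[\delta,1/\delta]$, the cancellation
\[
\sqrt{p-2}\,\frac{\partial \Theta_1}{\partial z}(p,z) \;\longrightarrow\; -\frac{\theta\,z^2}{\sqrt{2f_\infty(z)}}\cdot m_1 \;+\; \frac{z^2}{\sqrt{2f_\infty(z)}}\cdot m_2 \;=\;0,
\]
where $(m_1,m_2,\theta)=(1,2,2)$ for the $\mathcal{T}$-graph and $(m_1,m_2,\theta)=(2,1,1/2)$ for the tadpole, in both of which $\theta m_1=m_2$. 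In contrast, since $\sqrt{p-2}\,\mu_1\to 0$ and $\sqrt{p-2}\,\mu_2\to C_\infty:=\frac{1}{\sqrt 2}\int_0^{\sqrt e} t^2/\sqrt{f_\infty(t)}\,dt>0$,
\[
\sqrt{p-2}\,\Theta_1(p,z(p,\lambda)) \;\longrightarrow\; C(\theta)>0,
\]
uniformly in $\lambda\in[\delta,1/\delta]$.

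Combined with $\sqrt{p-2}\,L(p,z)\to L_\infty(z)>0$ and $\sqrt{p-2}\,\partial_z L(p,z)\to -\theta/\sqrt{2f_\infty(z)}<0$, the ratio $L(p,z)/\partial_z L(p,z)$ stays bounded away from $0$ and $\infty$. Therefore the second term of \eqref{eq:sinalp2} is $o\bigl((p-2)^{-1/2}\bigr)$, while from $\alpha=\frac{6-p}{2(p-2)}\sim \frac{2}{p-2}$ one has
\[
\alpha\,\Theta_1(p,z(p,\lambda)) \;\sim\; \frac{2C(\theta)}{(p-2)^{3/2}} \;\longrightarrow\; +\infty,
\]
uniformly in $\lambda\in[\delta,1/\delta]$. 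Thus, for $p$ sufficiently close to $2^+$, the positive first term dominates and \eqref{eq:sinalp2} holds uniformly.

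The main obstacle is precisely the cancellation in $\partial_z\Theta_1$: a priori both terms of \eqref{eq:sinalp2} could each be of order $(p-2)^{-3/2}$, in which case a much more delicate comparison of coefficients (and of signs) would be required. The cancellation rests on the identity $\theta m_1 = m_2$, which is a structural feature of the $\mathcal{T}$ and tadpole graphs; in more general single-knot graphs (cf.\ Remark \ref{rem:theta}) this identity need not hold, and a subleading-order analysis would be necessary. The orbital stability statement in Theorem \ref{thm:asymptotics}(a) then follows by invoking the Vakhitov--Kolokolov criterion (Theorem \ref{th:5.27}), using that $(A_1)$--$(A_3)$ were already verified in Section \ref{sec:GSS_particular} and that the chosen range of $\lambda$ can be taken to avoid the exceptional frequency $\lambda^*$ (cf.\ Remark \ref{rem:lambda_estrela}).
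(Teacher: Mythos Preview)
Your overall strategy is correct and matches the paper's: the first term $\alpha\Theta_1$ blows up like $(p-2)^{-3/2}$ with positive sign, while the second term is lower order. You also correctly single out the structural identity $\theta m_1=m_2$ as the reason the leading contributions to $\partial_z\Theta_1$ cancel. However, the execution has a genuine gap coming from the fact that $z(p,\lambda)\to\sqrt{e}$, where $f_\infty$ \emph{vanishes}.

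Concretely, your displayed ``convergence''
\[
\sqrt{p-2}\,\frac{\partial \Theta_1}{\partial z}(p,z(p,\lambda))\ \longrightarrow\ (-\theta m_1+m_2)\,\frac{z^2}{\sqrt{2f_\infty(z)}}=0
\]
is a $0\cdot\infty$ indeterminate: Lemma~\ref{lem:lim_p2} only asserts that $\sqrt{p-2}\,\partial_z\mu_1$ and $\sqrt{p-2}\,\partial_z\mu_2$ are each asymptotic to $\pm z^2/\sqrt{2f_\infty(z)}$, and these quantities \emph{diverge} because $f_\infty(z(p,\lambda))\to 0$. Summing two diverging sequences whose leading orders cancel gives only $o\!\left(z^2/\sqrt{f_\infty(z)}\right)$, not $o(1)$. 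Similarly, your claim $\sqrt{p-2}\,L(p,z)\to L_\infty(z)>0$ is false at $z=z(p,\lambda)$: since $L(p,z(p,\lambda))=\sqrt{\lambda}$ identically, one has $\sqrt{p-2}\,L\to 0$, so your boundedness assertion for $L/\partial_z L$ is not justified as written.

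The fix, which is what the paper does, is to normalize by the extra factor $\sqrt{f_\infty(z(p,\lambda))}$: each of $\sqrt{(p-2)f_\infty(z)}\,\partial_z\mu_1$, $\sqrt{(p-2)f_\infty(z)}\,\partial_z\mu_2$ and $\sqrt{(p-2)f_\infty(z)}\,\partial_z L$ then has a \emph{finite} limit (respectively $-\theta e/\sqrt{2}$, $e/\sqrt{2}$, and $-\theta/\sqrt{2}$). Now the cancellation $\theta m_1=m_2$ legitimately yields $\sqrt{(p-2)f_\infty(z)}\,\partial_z\Theta_1\to 0$, hence $\partial_z\Theta_1/\partial_z L\to 0$; combined with $L=\sqrt{\lambda}$ bounded, the second term tends to $0$, which is of course dominated by the first term $\sim (p-2)^{-3/2}$. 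With this correction your argument goes through.
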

\begin{proof}
    By Lemma \ref{lem:lim_p2}, and recalling from Lemma \ref{lemma:Theta_1_expressions} the relation between $\Theta_1$, $\mu_1$ and $\mu_2$ in the $\mathcal{T}$ and tadpole graphs, up to positive factors we have
    \[\alpha\Theta_1(p,z(p,\lambda)) \sim \frac{1}{\sqrt{2}(p-2)^{3/2}}\int_0^{\sqrt{e}} \frac{t^2dt}{\sqrt{f_\infty(t)}},\]
    \[
    \sqrt{p-2}\sqrt{f_\infty(z(p,\lambda))}\frac{\partial\Theta_1}{\partial z}(p,z(p,\lambda))\sim 0
    \]
    and
    \[
    \sqrt{p-2}\sqrt{f_\infty(z(p,\lambda))}\frac{\partial L}{\partial z}(p,z(p,\lambda))\sim -1.
    \]
    Since $L(p,z(p,\lambda))=\sqrt{\lambda}$, we find
    \begin{align*}
        \frac{\partial \Theta}{\partial \lambda}(p,\lambda)&=\alpha\Theta_1(p,z(p,\lambda))+\frac{L(p,z(p,\lambda))}{2}\frac{\partial\Theta_1}{\partial z}(p,z(p,\lambda))\frac{1}{\frac{\partial L}{\partial z}(p,z(p,\lambda))}\\&\sim \frac{1}{(p-2)^{3/2}} \to \infty,
    \end{align*}
    and the conclusion follows.
\end{proof}
\begin{proof}[Proof of Theorem \ref{thm:asymptotics}-(a).] Let $\lambda^*=\lambda^*(p)=(L(p,1))^2$. Then, by Lemma \ref{lem:lim_z_p2}, $\lambda^*=(L(p,1))^2\sim \infty$ as $p\sim 2^+$. In particular, for $p$ close to $2$, $\lambda \neq \lambda^*(p)$. The result now follows from \eqref{eq:sinalp2}, Section \ref{sec:GSS_particular} and Theorem \ref{th:5.27}.
\end{proof}

\subsubsection{Asymptotics for $p\to \infty$}
{Observe that, for $\alpha$ given by \eqref{eq:alpha},
\[
\lim_{p\to \infty} \alpha=\lim_{p\to \infty} \frac{p-6}{2(p-2)}=\frac{1}{2}.
\]
In particular, contrary to the case $p\sim 2^+$, where the first term in \eqref{der_Theta_z2} dominated, both terms in \eqref{der_Theta_z2} will be of equal size as $p\sim \infty$. This fact will force us to perform a more precise asymptotic analysis.}
\begin{lemma}\label{lemma1:pinfty}
    Take $\delta>0$. Then, as $p\sim\infty$,
    \begin{equation}
        f(z)\sim \frac{z^2}{2},\quad f'(z) \sim z,\quad f''(z)\sim 1, \quad f_2^{-1}((1-\theta^2)f(z))\sim 1. 
    \end{equation}
    uniformly for $z\in [\delta,1-\delta]$,
\end{lemma}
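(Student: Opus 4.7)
The proof plan is to exploit the explicit formula $f(z)=z^2/2-z^p/p$ together with its derivatives
\[
f'(z)=z-z^{p-1},\qquad f''(z)=1-(p-1)z^{p-2},
\]
and reduce each of the first three asymptotics to the uniform vanishing, on $[\delta,1-\delta]$, of the ``tail terms'' $z^p/p$, $z^{p-1}$, and $(p-1)z^{p-2}$. Since $z\le 1-\delta<1$, one has $z^{p-2}\le e^{(p-2)\ln(1-\delta)}$, which decays exponentially in $p$; hence even after multiplication by the polynomial factor $p-1$, the quantity $(p-1)z^{p-2}$ converges to $0$ uniformly in $z\in[\delta,1-\delta]$. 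The analogous (and easier) bounds yield $z^p/p\to 0$ and $z^{p-1}\to 0$ uniformly, from which the asymptotics for $f$, $f'$, $f''$ follow immediately.

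For the fourth asymptotic, I set $y_p:=f_2^{-1}((1-\theta^2)f(z))$, so that $y_p\ge 1$ and $f(y_p)=(1-\theta^2)f(z)$. From the first asymptotic (proved above), $(1-\theta^2)f(z)\to (1-\theta^2)z^2/2$ uniformly in $z\in[\delta,1-\delta]$, and in particular the right-hand side is uniformly bounded for $p$ large. The objective is to conclude $y_p\to 1$ uniformly.

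The key step is a contradiction argument exploiting the rapid decay of $f_2$: suppose, along some subsequence, $y_p\ge 1+\varepsilon$ for some $\varepsilon>0$ and $z\in[\delta,1-\delta]$. Since $f_2$ is strictly decreasing on $[1,\infty)$,
\[
f(y_p)\le f(1+\varepsilon)=\frac{(1+\varepsilon)^2}{2}-\frac{(1+\varepsilon)^p}{p}\longrightarrow -\infty,
\]
contradicting the uniform boundedness of $(1-\theta^2)f(z)$. Hence $y_p\to 1$ uniformly, completing the proof.

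I do not expect any serious obstacle: the whole argument is an exercise in controlling the exponentially small or exponentially large terms $z^p$ (respectively $(1+\varepsilon)^p$). The only point requiring minor care is the uniformity in $z\in[\delta,1-\delta]$, which is immediate because all the relevant monotone bounds can be replaced by their values at the endpoints $\delta$ and $1-\delta$.
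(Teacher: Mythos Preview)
Your proof is correct and follows essentially the same approach as the paper: direct inspection of the explicit formula for $f$ and its derivatives for the first three asymptotics, and a contradiction argument using $f(1+\varepsilon)\to-\infty$ together with the boundedness of $(1-\theta^2)f(z)$ for the fourth. Your write-up is in fact more detailed than the paper's (which dismisses the first three claims in a single sentence), but the underlying ideas are identical.
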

\begin{proof}
    The first three asymptotics follow directly from the expression for $f(z)=\frac{z^2}{2}-\frac{z^p}{p}$. For the latter, observe that, by the definition of $f_2$, $f_2^{-1}((1-\theta^2)f(z))>1$. If 
    $$\liminf_{p\to \infty} f_2^{-1}((1-\theta^2)f(z))\ge 1+\epsilon,\quad \mbox{ for some } \epsilon>0,$$
    then we would have
    $$
    -\theta^2f(1)<\liminf_{p\to \infty} (1-\theta^2)f(z)\le \liminf_{p\to \infty}f(1+\epsilon) = -\infty,
    $$
    which is a contradiction.
\end{proof}
\begin{lemma}\label{lem:pinfty}
     Take $\delta>0$. Given $z\in[0,1-\delta]$, set $\sigma=-(1-\theta^2)z^2$. Then, as $p\sim\infty$,
     \begin{equation}\label{eq:Lpinfty}
         L(p,z)\sim \int_z^1 \frac{dt}{\sqrt{t^2+\sigma}}=\frac{1}{2}\ln\left(\frac{(\sqrt{1+\sigma}+1)(\theta-1)}{(\sqrt{1+\sigma}-1)(\theta+1)}\right)=:G(\sigma),\quad  \frac{\partial L}{\partial z}(p,z)\sim -\frac{1}{z\sqrt{1+\sigma}},
     \end{equation}
     \begin{equation}\label{eq:mu1pinfty} \mu_1(p,z,\theta)\sim \frac{\sqrt{1+\sigma}}{2} - \frac{\theta z^2}{2} - \frac{\sigma G(\sigma)}{2},\quad \frac{\partial \mu_1}{\partial z}(p,z,\theta)\sim -\theta z - \frac{G(\sigma)\sigma}{z}+ \frac{\sigma}{z\sqrt{\sigma+1}},
     \end{equation}
     \begin{equation}\label{eq:mu2pinfty}
          \mu_2(p,z)\sim \frac{z^2}{2},\quad \frac{\partial \mu_2}{\partial z}(p,z) \sim z,
     \end{equation}
     uniformly in $z\in[\delta,1-\delta]$. In particular, if $K=1/2$ for the $\mathcal{T}$ graph and $K=1$ for the tadpole graph,
     \begin{equation}\label{eq:thetapinfty}
         \Theta_1(p,z)\sim K\left(\sqrt{1+\sigma}-\sigma G(\sigma)\right), \quad \frac{\partial \Theta_1}{\partial z}(p,z) \sim 2K\left( - \frac{G(\sigma)\sigma}{z}+ \frac{\sigma}{z\sqrt{\sigma+1}} \right)
     \end{equation}
     and, for $\alpha=\frac{6-p}{p-2}$,
     \begin{equation}\label{eq:massapinfty}
          \alpha\Theta_1(p,z)\frac{\partial L}{\partial z}(p,z)+\frac{L(p,z)}{2}\frac{\partial\Theta_1}{\partial z}(p,z)\sim  \frac{K\sigma}{z}G(\sigma)^2\frac{d}{d\sigma}\left(\frac{\sqrt{1+\sigma}}{G(\sigma)}-\sigma\right).
     \end{equation}
\end{lemma}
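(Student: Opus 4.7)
The plan is to combine Lemma \ref{lemma1:pinfty} with the non-singular expressions for the derivatives provided by Lemmas \ref{lemma:L'} and \ref{lemma:mu_1reg}, and then apply the dominated convergence theorem to pass to the limit $p\to\infty$ in each integral, using a preliminary change of variable $t=z+s(f_2^{-1}((1-\theta^2)f(z))-z)$ to move all integrals to a fixed interval $s\in[0,1]$ where the endpoint singularity is of square-root type and uniformly dominated in $p$.

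For \eqref{eq:Lpinfty}, the pointwise limit of the integrand in \eqref{eq:Length} is $\sqrt{2}/\sqrt{t^2+\sigma}$ by Lemma \ref{lemma1:pinfty}, and since $f_2^{-1}((1-\theta^2)f(z))\to 1$, dominated convergence yields $L(p,z)\sim \int_z^1 dt/\sqrt{t^2+\sigma}$. The closed form $G(\sigma)$ follows from $\int dt/\sqrt{t^2+\sigma}=\ln(t+\sqrt{t^2+\sigma})$, together with the identity $\sqrt{z^2+\sigma}=\theta z$ (since $\sigma=(\theta^2-1)z^2$) and the algebraic manipulation
$\frac{\sqrt{1+\sigma}+1}{\sqrt{1+\sigma}-1}=\frac{(\sqrt{1+\sigma}+1)^2}{\sigma}$.
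For $\partial_z L$, I would use the non-singular form \eqref{H_p(z,theta)NotSingular}: since $A(t)\to 1/(2t^2)$, the integral converges to $\frac{1}{\sqrt{2}}\int_z^1 dt/(t^2\sqrt{t^2+\sigma})$, which is evaluated in closed form via the substitution $u=1/t$ to give $(\theta-\sqrt{1+\sigma})/\sigma$. Using $\sigma=-(1-\theta^2)z^2$ in the prefactors and $(1-\theta^2)f(z)-f(1)\to -(\sigma+1)/2$ collapses all terms to $-1/(z\sqrt{1+\sigma})$.

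The $\mu_i$ asymptotics in \eqref{eq:mu1pinfty}--\eqref{eq:mu2pinfty} are obtained analogously. The pointwise limit of the integrand defining $\mu_1$ is $\sqrt{2}\,t^2/\sqrt{t^2+\sigma}$, so $\mu_1\sim \int_z^1 t^2\,dt/\sqrt{t^2+\sigma}$, and by integration by parts $\int_z^1 t^2/\sqrt{t^2+\sigma}\,dt=[t\sqrt{t^2+\sigma}/2]_z^1-(\sigma/2)\int_z^1 dt/\sqrt{t^2+\sigma}$, giving exactly $\tfrac{1}{2}\sqrt{1+\sigma}-\tfrac{1}{2}\theta z^{2}-\tfrac{1}{2}\sigma G(\sigma)$. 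For $\partial_z\mu_1$ I would use the non-singular form \eqref{mu1DerAux}: expanding, one finds $g(t)\to 4t^2-1$, hence $t^2-g(t)/2\to \tfrac{1}{2}-t^2$, and combining the two closed forms above with $(1-\theta^2)z^2=-\sigma$ collapses the expression to $-\theta z+\sigma/(z\sqrt{1+\sigma})-\sigma G(\sigma)/z$. The $\mu_2$ formulas are elementary since $f(t)\sim t^2/2$ uniformly on $[\delta,z]$. The $\Theta_1$ and $\partial_z\Theta_1$ asymptotics \eqref{eq:thetapinfty} follow by direct substitution in \eqref{Theta_1_T'} and \eqref{Theta_1_TADPOLE'}: in both graphs the $\theta z^2/2$ contribution from $\mu_1$ exactly cancels the $\mu_2\sim z^2/2$ contribution, leaving $\Theta_1\sim K(\sqrt{1+\sigma}-\sigma G(\sigma))$ with $K=1/2$ for $\mathcal{T}_1$ and $K=1$ for $\mathcal{G}_1$.

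Finally, \eqref{eq:massapinfty} is obtained by assembling the above (using the correct $\alpha\to -1/2$ from \eqref{eq:alpha}): one computes
$\alpha\Theta_1\,\partial_z L\sim \frac{K}{2z}-\frac{K\sigma G}{2z\sqrt{1+\sigma}}$ and $\tfrac{1}{2}L\,\partial_z\Theta_1\sim \frac{K\sigma G}{z\sqrt{1+\sigma}}-\frac{K\sigma G^2}{z}$, so the sum equals $\frac{K}{2z}+\frac{K\sigma G}{2z\sqrt{1+\sigma}}-\frac{K\sigma G^2}{z}$. On the other hand, differentiating the closed form for $G$ gives $G'(\sigma)=-1/(2\sigma\sqrt{1+\sigma})$, and hence
$\frac{d}{d\sigma}\!\left(\frac{\sqrt{1+\sigma}}{G}-\sigma\right)=\frac{1}{2\sqrt{1+\sigma}\,G}+\frac{1}{2\sigma G^{2}}-1,$
which after multiplying by $K\sigma G^2/z$ reproduces exactly the above. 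The main technical obstacle will be bookkeeping: ensuring that the dominated convergence hypotheses hold uniformly in $z\in[\delta,1-\delta]$ near the moving endpoint $f_2^{-1}((1-\theta^2)f(z))$ (handled by the change of variables mentioned above) and then carrying out the final algebraic simplification without error, since several cancellations between contributions from $\alpha\Theta_1\,\partial_z L$ and $\tfrac{1}{2}L\,\partial_z\Theta_1$ are needed to recognize the combined expression as the derivative of $\sqrt{1+\sigma}/G-\sigma$.
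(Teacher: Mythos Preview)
Your proposal is correct and follows essentially the same approach as the paper: both pass to the limit in the defining integrals via dominated convergence together with Lemma~\ref{lemma1:pinfty}, and both compute $\partial_z L$ and $\partial_z\mu_1$ through the non-singular representations \eqref{H_p(z,theta)NotSingular} and \eqref{mu1DerAux}, using the same pointwise limits $A(t)\to 1/(2t^2)$ and $t^2-g(t)/2\to \tfrac12-t^2$. The only cosmetic difference is in \eqref{eq:massapinfty}: the paper observes that, since $\alpha\to -\tfrac12$, the combination equals $\tfrac{L^2}{2}\,\partial_z(\Theta_1/L)$ and then converts the $z$-derivative into a $\sigma$-derivative via $d\sigma/dz=2\sigma/z$, whereas you expand both sides explicitly and match terms using $G'(\sigma)=-1/(2\sigma\sqrt{1+\sigma})$; the two computations are equivalent.
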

\begin{proof}
    The proof follows from arguments similar to those used to prove Lemma \ref{assymptotic_mu_der}. 
    
    We start with the asymptotics for $L$. Since $z$ is bounded away from $0$ and $\varphi(0)$ is a simple zero of $f$, by dominated convergence, the asymptotics derived in Lemma \ref{lemma1:pinfty} yield
    $$
    L(p,z)=\frac{1}{\sqrt{2}}\int_z^{f_2^{-1}((1-\theta^2)f(z))}\frac{dt}{\sqrt{f(t)-(1-\theta^2)f(z)}} \sim \int_z^{1}\frac{dt}{\sqrt{t^2-(1-\theta^2)z^2}} = \int_z^{1}\frac{dt}{\sqrt{t^2+\sigma}}.  
    $$
    Computing explicitly this integral, we arrive at the first equality in \eqref{eq:Lpinfty}. For the second, recall from Lemma \ref{lemma:Theta_1_expressions} that
    \begin{equation}\label{eq:aux_asym11}
\sqrt{2}\frac{\partial L}{\partial z}(p,z)\left((1-\theta^2)f(z)-f(1)\right)=\theta\frac{f(1)}{\sqrt{f(z)}}+(1-\theta^2)f'(z)\int_z^{f_{2}^{-1}\left((1-\theta^2)f(z)\right)}\frac{A(t)dt}{\sqrt{f(t)-(1-\theta^2)f(z)}},
\end{equation}
where $A(t)=\frac{1}{2}-\frac{f(t)-f(1)}{(f'(t))^2}f''(t)$. Observing that $A(t)\sim \frac{1}{2t^2}$,
    \begin{align*}
        -\frac{1}{\sqrt{2}}\frac{\partial L}{\partial z}(p,z)(\sigma +1) = \frac{\theta}{\sqrt{2}z}-\frac{\sqrt{2}\sigma}{z}\int_z^1 \frac{dt}{2t^2\sqrt{t^2+\sigma}} = \frac{\theta}{\sqrt{2}z} + \frac{\sigma}{\sqrt{2}z}\left[\frac{\sqrt{t^2+\sigma}}{t\sigma}\right]_z^1 = \frac{\sqrt{1+\sigma}}{\sqrt{2}z}
    \end{align*}
    which concludes the proof of \eqref{eq:Lpinfty}. Next, we have
\begin{align*}
        \mu_1(p,z,\theta)&=\frac{1}{\sqrt{2}}\int_z^{f_2^{-1}((1-\theta^2f(z)))}\frac{t^2 dt}{\sqrt{f(t)-(1-\theta^2)f(z)}}\\&\sim \int_z^{1}\frac{t^2dt}{\sqrt{t^2-(1-\theta^2)z^2}} \\&= \left[\frac{t\sqrt{t^2+\sigma}}{2}\right]_{z}^1 - \frac{\sigma}{2}\int_z^1 \frac{dt}{\sqrt{t^2-(1-\theta^2)z^2}} = \frac{\sqrt{1+\sigma}}{2}-\frac{\theta z^2}{2} - \frac{\sigma G(\sigma)}{2}.
\end{align*}
From \eqref{mu1DerAux},
\begin{align*}
    		\sqrt{2}\frac{\partial\mu_1}{\partial z}(p,z,\theta)&((1-\theta^2)f(z)-f(1))=\theta z^2\frac{f(1)}{\sqrt{f(z)}}\nonumber\\
        &-(1-\theta^2)f'(z)\int_z^{f_{2}^{-1}((1-\theta^2)f(z))}\left(t^2-\frac{g(t)}{2}\right)\frac{dt}{\sqrt{f(t)-(1-\theta^2)f(z)}},\label{mu1DerAux}
\end{align*}
and thus
\begin{align*}
    -\frac{1+\sigma}{\sqrt{2}}\frac{\partial \mu_1}{\partial z}(p,z,\theta)&\sim \frac{\theta z}{\sqrt{2}} - \frac{(1-\theta^2)z}{\sqrt{2}}\int_z^1 \left(\frac{1}{2}-t^2\right)\frac{dt}{\sqrt{t^2+\sigma}}\\&= \frac{\theta z}{\sqrt{2}} + \frac{\sigma}{\sqrt{2}z} \left((\sigma+1)G(\sigma)-\sqrt{\sigma+1}+\theta z^2\right)\\&=-\frac{1+\sigma}{\sqrt{2}}\left(-\theta z - \frac{G(\sigma)\sigma}{z}+ \frac{\sigma}{z\sqrt{\sigma+1}}\right),
\end{align*}
which proves \eqref{eq:mu1pinfty}. Finally, \eqref{eq:mu2pinfty} follows from 
$$
\mu_2(p,z)=\frac{1}{\sqrt{2}}\int_0^z \frac{t^2dt}{\sqrt{f(t)}} \sim \int_0^t t dt = \frac{z^2}{2}\quad, \frac{\partial \mu_2}{\partial z}=\frac{z^2}{\sqrt{2f(z)}}\sim z.
$$
The proof of \eqref{eq:thetapinfty}  follows from using \eqref{Theta_1_T'} (for the $\mathcal{T}$ graph) and \eqref{Theta_1_TADPOLE'} (for the tadpole graph). Finally, since $\alpha \to -1/2$, \eqref{eq:massapinfty} is a consequence of
\begin{equation}
    \alpha\Theta_1(p,z)\frac{\partial L}{\partial z}(p,z)+\frac{L(p,z)}{2}\frac{\partial\Theta_1}{\partial z}(p,z) \sim \frac{L^2(p,z)}{2} \frac{\partial}{\partial z}\left(\frac{\Theta_1(p,z)}{L(p,z)}\right)\sim  \frac{K\sigma}{z}G(\sigma)^2\frac{d}{d\sigma}\left(\frac{\sqrt{1+\sigma}}{G(\sigma)}-\sigma\right).
\end{equation}
\end{proof}
\begin{proposition}
  For $z\in[\delta,1-\delta]$, as $p\sim \infty$,
    \begin{equation}\label{eq:sinalpinfty}
        \alpha\Theta_1(p,z)\frac{\partial L}{\partial z}(p,z)+\frac{L(p,z)}{2}\frac{\partial\Theta_1}{\partial z}(p,z)>0.
    \end{equation}
    In particular, as $p\sim \infty$,
\begin{equation}\label{eq:sinalpinfty2}
        \frac{\partial \Theta}{\partial \lambda}(p,\lambda) < 0, \quad  \mbox{ uniformly in }\lambda\in[\delta,1/\delta].
\end{equation}
\end{proposition}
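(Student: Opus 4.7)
The plan is to use the asymptotic reduction provided by Lemma \ref{lem:pinfty}. Setting $H(\sigma):=\sqrt{1+\sigma}/G(\sigma)-\sigma$, the right-hand side of \eqref{eq:massapinfty} equals $(K\sigma/z)\,G(\sigma)^2\,H'(\sigma)$; since $K$, $z$, and $G(\sigma)^2$ are all positive, the sign of the left-hand side of \eqref{eq:sinalpinfty} coincides asymptotically with that of $\sigma H'(\sigma)$. The crucial observation is that $\sigma=-(1-\theta^2)z^2$ has a \emph{fixed} sign on each graph: $\sigma>0$ on the $\mathcal{T}$-graph ($\theta=2$) and $\sigma<0$ on the tadpole ($\theta=1/2$). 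Consequently, the problem reduces to showing $H'(\sigma)>0$ in the first case and $H'(\sigma)<0$ in the second.

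Next, I would compute $H'(\sigma)$ explicitly from the closed form for $G$ in \eqref{eq:Lpinfty}. Writing $G=\frac{1}{2}[\ln(u+1)-\ln|u-1|]+\mathrm{const}$ in terms of $u:=\sqrt{1+\sigma}$, a direct differentiation yields $G'(\sigma)=-\frac{1}{2\sigma\sqrt{1+\sigma}}$, so that $\sqrt{1+\sigma}\,G'(\sigma)=-\frac{1}{2\sigma}$. Substituting into the quotient rule for $H'(\sigma)$ and clearing common positive factors, the desired inequality $\sigma H'(\sigma)>0$ reduces to
\[
\Psi(\sigma):=\sqrt{1+\sigma}+\sigma G(\sigma)-2\sigma\sqrt{1+\sigma}\,G(\sigma)^2>0.
\]
I expect the main obstacle to lie in verifying $\Psi>0$ uniformly on the respective compact ranges of $\sigma$. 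My approach is to work in the variable $u$ and exploit the unified form $\Psi=u+(u^2-1)G(1-2uG)$ with $G=\frac{1}{2}\ln\frac{u+1}{3|u-1|}$, using the positive boundary values ($\Psi\to 1$ as $u\to 1$, since both $\sigma G$ and $\sigma G^2$ vanish there, and $\Psi=u$ at $z=1$, where $G=0$) combined with either a direct monotonicity study of $\Psi$ in $u$, or a case split according to the sign of the auxiliary quantity $1-2uG$.

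Once \eqref{eq:sinalpinfty} is established, \eqref{eq:sinalpinfty2} follows from \eqref{der_Theta_z}: because $\partial L/\partial z<0$ by Lemma \ref{lemma:L'} and $\lambda^{\alpha-1}>0$, dividing by the negative factor $\partial L/\partial z$ flips the sign and gives $\partial\Theta/\partial\lambda<0$ uniformly for $\lambda\in[\delta,1/\delta]$. Orbital instability then follows from the Vakhitov--Kolokolov criterion (Theorem \ref{th:5.27}), combined with the verification of $(A_1)$--$(A_3)$ in Section \ref{sec:GSS_particular}; the isolated frequency $\lambda^*(p)=(L(p,1))^2$ at which $(A_3)$ may fail causes no difficulty, since $L(p,1)\to 0$ as $p\to\infty$ (from \eqref{eq:Lpinfty}, $G$ vanishes at $z=1$), and hence $\lambda^*(p)<\delta$ for $p$ sufficiently large.
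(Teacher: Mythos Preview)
Your reduction is correct and coincides with the paper's: via Lemma~\ref{lem:pinfty} the sign question becomes $\sigma H'(\sigma)>0$, and your computation $G'(\sigma)=-1/(2\sigma\sqrt{1+\sigma})$ correctly yields the equivalent inequality $\Psi:=\sqrt{1+\sigma}+\sigma G-2\sigma\sqrt{1+\sigma}\,G^{2}>0$. Two points, however, are left unresolved.

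First, neither of your two suggested routes to $\Psi>0$ works as stated. In the variable $u=\sqrt{1+\sigma}$ one computes $\tfrac{d\Psi}{du}=2G\bigl(3u-(3u^{2}-1)G\bigr)$, which changes sign (for the $\mathcal{T}$-graph it is negative near $u=1^{+}$ and positive near $u=2^{-}$), so $\Psi$ is not monotone; and the case split on $1-2uG$ is inconclusive in the region $1-2uG<0$, where the second summand of $\Psi$ is negative. Your approach can be salvaged by evaluating $\Psi$ at interior critical points: substituting $G=3u/(3u^{2}-1)$ gives $\Psi=4u/(3u^{2}-1)^{2}>0$, which together with the positive endpoint values closes the argument. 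The paper instead passes to $h(u):=2\Psi/(u^{2}-1)$ and observes the identity
\[
h'(u)=-\Bigl(2G-\tfrac{2u}{u^{2}-1}\Bigr)^{2}\le 0,
\]
so that $h$ is monotone and a single endpoint evaluation ($h(2)=4/3>0$ for the $\mathcal{T}$-graph, $h(0)=\ln\tfrac{1}{3}<0$ for the tadpole) settles the sign of $\sigma h=2\Psi$.

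Second, passing from \eqref{eq:sinalpinfty} (valid for $z\in[\delta,1-\delta]$) to \eqref{eq:sinalpinfty2} (claimed for $\lambda\in[\delta,1/\delta]$) requires showing that $z(p,\lambda)$ remains in a fixed compact subinterval of $(0,1)$ as $p\to\infty$, uniformly in $\lambda\in[\delta,1/\delta]$. You do not address this; the paper supplies it as a separate step, arguing by contradiction from the monotonicity of $z\mapsto L(p,z)$ and the fact that the limit $z\mapsto G(-(1-\theta^{2})z^{2})$ is a strictly decreasing bijection from $(0,1)$ onto $(0,\infty)$. Your remark on $\lambda^{*}(p)\to 0$ is correct.
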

\begin{proof}
\textit{Step 1. Proof of \eqref{eq:sinalpinfty}.} By Lemma \ref{lem:pinfty}, the claim is equivalent to
$$
\sigma \frac{d}{d\sigma}\left(\frac{\sqrt{1+\sigma}}{G(\sigma)}-\sigma\right)>0.
$$
Fix first the case of the $\mathcal{T}$ graph, where $\theta=2$. Then $\sigma=-(1-\theta^2)z^2>0$. For $\xi=\sqrt{1+\sigma}=\sqrt{1+3z^2}\in (1,2)$, we want to check that
\begin{align*}
    0<\frac{d}{d\xi}\left(\frac{\xi}{\ln\left(\frac{\xi+1}{3(\xi-1)}\right)}-\frac{\xi^2-1}{2}\right)&= \frac{1}{\ln^2\left(\frac{\xi+1}{3(\xi-1)}\right)}\left(\ln\left(\frac{\xi+1}{3(\xi-1)}\right)+\frac{2\xi}{\xi^2-1}-\xi\ln^2\left(\frac{\xi+1}{3(\xi-1)}\right)\right)\\&=:\frac{1}{\ln^2\left(\frac{\xi+1}{3(\xi-1)}\right)}h(\xi).
\end{align*}
A direct computation shows that
$$
h'(\xi)=-\left(\ln\left(\frac{\xi+1}{3(\xi-1)}\right)-\frac{2\xi}{\xi^2-1}\right)^2<0,\quad h(2)=\frac{4}{3}.
$$
Therefore $h>0$ over $(1,2)$, which concludes the proof of \eqref{eq:sinalpinfty} for the $\mathcal{T}$ graph. For the tadpole graph, $\theta=1/2$ and $\sigma=-(1-\theta^2)z^2<0.$ Writing once again $\xi=\sqrt{1+\sigma}=\sqrt{1-\frac{3}{4}z^2}\in (0,1)$, we need to verify that
\begin{align*}
    0>\frac{d}{d\xi}\left(\frac{\xi}{\ln\left(\frac{\xi+1}{3(1-\xi)}\right)}-\frac{\xi^2-1}{2}\right)&= \frac{1}{\ln^2\left(\frac{\xi+1}{3(1-\xi)}\right)}\left(\ln\left(\frac{\xi+1}{3(1-\xi)}\right)+\frac{2\xi}{\xi^2-1}-\xi\ln^2\left(\frac{\xi+1}{3(1-\xi)}\right)\right)\\&=:\frac{1}{\ln^2\left(\frac{\xi+1}{3(1-\xi)}\right)}j(\xi).
\end{align*}
Again, by direct computation,
$$
j'(\xi)=-\left(\ln\left(\frac{\xi+1}{3(1-\xi)}\right)-\frac{2\xi}{\xi^2-1}\right)^2<0,\quad j(0)=\ln\frac{1}{3}<0
$$
and thus $j<0$ over $(0,1)$. This concludes the proof of \eqref{eq:sinalpinfty}.

\textit{Step 2.} We claim that, letting $z(p,\lambda)$ be the (unique) solution to $\sqrt{\lambda}=L(p,z)$, there exists $\delta>0$ such that $z(p,\lambda)\in [\delta,1-\delta]$, uniformly in $\lambda\ge \lambda_0$. 

By contradiction, suppose that there exists a sequence $(p_n,\lambda_n)$, with $p_n\to \infty$ and $\lambda_0<\lambda_n$, such that  $z(p_n,\lambda_n)\ge 1$. As $z\mapsto G(-(1-\theta^2)z)$ is strictly decreasing, with $G(0^+)=+\infty$ and $G(1)=0$, we may let $z_0\in (0,1)$ be the unique solution to $G(-(1-\theta^2)z)=\sqrt{\lambda_0}$. Then, for $n$ large, $z(p_n,\lambda_n)>\frac{z_0+1}{2}>z_0$. Since, for each fixed $p\in (2,\infty)$, $z\mapsto L(p,z)$ is strictly decreasing,
$$
\sqrt{\lambda_n}=L(p_n,z(p_n,\lambda_n)) < L\left(p_n,\frac{z_0+1}{2}\right)
$$
and, passing to the limit,
$$
\sqrt{\lambda_0} \le \lim L\left(p_n,\frac{z_0+1}{2}\right) =  G\left(-(1-\theta^2)\frac{z_0+1}{2}\right) <  G\left(-(1-\theta^2)z_0\right) =\sqrt{\lambda_0},
$$
which is absurd. This proves that $z(p,\lambda)<1-\delta$, for some $\delta>0$. A similar argument can be used to show that $z(p,\lambda)>\delta$ and the claim follows.

\textit{Step 3.} For $\lambda>0$ fixed, we need to prove that, as $p\sim \infty$, the expression in \eqref{der_Theta_z}, with $z=z(p,\lambda)$, is strictly negative. Since $\frac{\partial L}{\partial z}<0$, it is enough to check that 
\begin{equation}
    \alpha\Theta_1(p,z(p,\lambda))\frac{\partial L}{\partial z}(p,z(p,\lambda))+\frac{L(p,z(p,\lambda))}{2}\frac{\partial\Theta_1}{\partial z}(p,z(p,\lambda))>0,
\end{equation}
which follows directly from Steps 1 and 2.
\end{proof}

\begin{proof}{Proof of Theorem \ref{thm:asymptotics}-(b).} Let $\lambda^*=\lambda^*(p)=(L(p,1))^2$. Then, arguing as in the proof of Lemma \ref{lem:pinfty}, $\lambda^*\sim 0$ as $p\sim \infty$. In particular, for $p$ large, $\lambda \neq \lambda^*(p)$. The result now follows from \eqref{eq:sinalpinfty2} and the Grillakis-Shatah-Strauss theory developed in Section \ref{sec:GSS_particular}.
\end{proof}

\paragraph{Acknowledgments.}
F. Agostinho, S. Correia and H. Tavares are partially supported by the Portuguese government through FCT - Funda\c c\~ao para a Ci\^encia e a Tecnologia, I.P., project UIDB/04459/2020 with DOI identifier 10-54499/UIDP/04459/2020 (CAMGSD). F. Agostinho was also partially supported by Funda\c c\~ao para a Ci\^encia e Tecnologia, I.P., through the PhD grant UI/BD/150776/2020. H. Tavares is also partially supported by FCT under the project  2023.13921.PEX, with DOI identifier  https://doi.org/10.54499/2023.13921.PEX (project SpectralOPs).

	{\noindent Francisco Agostinho, Sim\~ao Correia and Hugo Tavares}\\
{\footnotesize
	Center for Mathematical Analysis, Geometry and Dynamical Systems,\\
	Instituto Superior T\'ecnico, Universidade de Lisboa\\
    Department of Mathematics,\\
	Av. Rovisco Pais, 1049-001 Lisboa, Portugal\\
	\texttt{francisco.c.agostinho@tecnico.ulisboa.pt}\\ \texttt{simao.f.correia@tecnico.ulisboa.pt}\\ \texttt{hugo.n.tavares@tecnico.ulisboa.pt}
}


\begin{thebibliography}{10}

\bibitem{Adami_star2012}
R.~Adami, C.~Cacciapuoti, D.~Finco, and D.~Noja.
\newblock Stationary states of nls on star graphs.
\newblock {\em Europhysics Letters}, 100(1):10003, oct 2012.

\bibitem{Simonegrid2018}
R.~Adami and S.~Dovetta.
\newblock One-dimensional versions of three-dimensional system: Ground states
  for the {NLS} on the spatial grid.
\newblock {\em Rendiconti di Matematica e delle Sue Applicazioni}, 39:181--194,
  2018.

\bibitem{adami2015nls}
Riccardo Adami, Enrico Serra, and Paolo Tilli.
\newblock {NLS} ground states on graphs.
\newblock {\em Calculus of Variations and Partial Differential Equations},
  54:743--761, 2015.

\bibitem{adami2016threshold}
Riccardo Adami, Enrico Serra, and Paolo Tilli.
\newblock Threshold phenomena and existence results for {NLS} ground states on
  metric graphs.
\newblock {\em Journal of Functional Analysis}, 271(1):201--223, 2016.

\bibitem{adami2017negative}
Riccardo Adami, Enrico Serra, and Paolo Tilli.
\newblock Negative energy ground states for the ${L}^2$-critical {NLSE} on
  metric graphs.
\newblock {\em Communications in Mathematical Physics}, 352:387--406, 2017.

\bibitem{AgostinhoCorreiaTavares}
Francisco Agostinho, Sim\~ao Correia, and Hugo Tavares.
\newblock Classification and stability of positive solutions to the {NLS}
  equation on the {$\mathcal{T}$}-metric graph.
\newblock {\em Nonlinearity}, 37(2):Paper No. 025005, 47, 2024.

\bibitem{AgostinhoCorreiaTavares2}
Francisco Agostinho, Sim\~ao Correia, and Hugo Tavares.
\newblock A comprehensive study of bound-states for the nonlinear Schr\"odinger
  equation on single-knot metric graphs, 2025.

\bibitem{amann1990ordinary}
H.~Amann.
\newblock {\em Ordinary {D}ifferential {E}quations: An {I}ntroduction to
  {N}onlinear {A}nalysis}.
\newblock De Gruyter studies in mathematics. de Gruyter, 1990.

\bibitem{angulo2024stability}
Jaime Angulo~Pava.
\newblock Stability theory for the {NLS} equation on looping edge graphs.
\newblock {\em Mathematische Zeitschrift}, 308(1):19, 2024.

\bibitem{berestycki1981instability}
Henri Berestycki and Thierry Cazenave.
\newblock On the instability of stationary states in nonlinear {S}chr\"odinger
  or {K}lein-{G}ordon equations.
\newblock {\em Comptes Rendus des Seances de l'Academie des Sciences. Serie 1},
  293(9):489--492, 1981.

\bibitem{berkolaiko2013introduction}
Gregory Berkolaiko and Peter Kuchment.
\newblock {\em Introduction to quantum graphs}.
\newblock American Mathematical Soc., 2013.

\bibitem{bolte2014many}
Jens Bolte and Joachim Kerner.
\newblock Many-particle quantum graphs and {B}ose-{E}instein condensation.
\newblock {\em Journal of Mathematical Physics}, 55(6), 2014.

\bibitem{Borthwick2023}
Jack Borthwick, Xiaojun Chang, Louis Jeanjean, and Nicola Soave.
\newblock Normalized solutions of {$L^2$}-supercritical nls equations on
  noncompact metric graphs with localized nonlinearities.
\newblock {\em Nonlinearity}, 36(7):3776, jun 2023.

\bibitem{bulgakov2011symmetry}
Evgeny Bulgakov and Almas Sadreev.
\newblock Symmetry breaking in a {T}-shaped photonic waveguide coupled with two
  identical nonlinear cavities.
\newblock {\em Physical Review B?Condensed Matter and Materials Physics},
  84(15):155304, 2011.

\bibitem{burioni2001bose}
R~Burioni, D~Cassi, Mario Rasetti, P~Sodano, and A~Vezzani.
\newblock Bose-{E}instein condensation on inhomogeneous complex networks.
\newblock {\em Journal of Physics B: Atomic, Molecular and Optical Physics},
  34(23):4697, 2001.

\bibitem{cacciapuoti2018variational}
Claudio Cacciapuoti, Simone Dovetta, and Enrico Serra.
\newblock Variational and stability properties of constant solutions to the
  {NLS} equation on compact metric graphs.
\newblock {\em Milan Journal of Mathematics}, 86:305--327, 2018.

\bibitem{cacciapuoti2017ground}
Claudio Cacciapuoti, Domenico Finco, and Diego Noja.
\newblock Ground state and orbital stability for the {NLS} equation on a
  general starlike graph with potentials.
\newblock {\em Nonlinearity}, 30(8):3271, 2017.

\bibitem{cazenave2003semilinear}
Thierry Cazenave.
\newblock {\em Semilinear Schr\"odinger Equations}, volume~10.
\newblock American Mathematical Soc., 2003.

\bibitem{cazenave1982orbital}
Thierry Cazenave and Pierre-Louis Lions.
\newblock Orbital stability of standing waves for some nonlinear
  {S}chr{\"o}dinger equations.
\newblock {\em Communications in Mathematical Physics}, 85:549--561, 1982.

\bibitem{chang2023normalized}
Xiaojun Chang, Louis Jeanjean, and Nicola Soave.
\newblock Normalized solutions of $l^2$-supercritical nls equations on compact
  metric graphs.
\newblock {\em Annales de l'Institut Henri Poincar{\'e} C}, 41(4):933--959,
  2023.

\bibitem{deCoster_etal2024}
Colette~De Coster, Simone Dovetta, Damien Galant, and Enrico Serra.
\newblock An action approach to nodal and least energy normalized solutions for
  nonlinear Schr\"odinger equations, 2024.

\bibitem{dalfovo1999theory}
Franco Dalfovo, Stefano Giorgini, Lev~P. Pitaevskii, and Sandro Stringari.
\newblock Theory of {B}ose-{E}instein condensation in trapped gases.
\newblock {\em Reviews of Modern Physics}, 71(3):463, 1999.

\bibitem{de2023notion}
Colette De~Coster, Simone Dovetta, Damien Galant, and Enrico Serra.
\newblock On the notion of ground state for nonlinear {S}chr\"{o}dinger
  equations on metric graphs.
\newblock {\em Calc. Var. Partial Differential Equations}, 62(5):Paper No. 159,
  2023.

\bibitem{SimoneTree2020}
S.~Dovetta, E.~Serra, and P.~Tilli.
\newblock {NLS} ground states on metric trees: existence results and open
  questions.
\newblock {\em Journal of the London Mathematical Society}, 102(3):1223--1240,
  2020.

\bibitem{dovetta2024nonuniquenessnormalizedgroundstates}
Simone Dovetta.
\newblock Non-uniqueness of normalized ground states for nonlinear
  {S}chr\"odinger equations on metric graphs, arXiv:2409.04098, 2024.

\bibitem{dovetta2020uniqueness}
Simone Dovetta, Enrico Serra, and Paolo Tilli.
\newblock Uniqueness and non--uniqueness of prescribed mass {NLS} ground states
  on metric graphs.
\newblock {\em Advances in Mathematics}, 374:107352, 2020.

\bibitem{dovetta2023action}
Simone Dovetta, Enrico Serra, and Paolo Tilli.
\newblock Action versus energy ground states in nonlinear {S}chr{\"o}dinger
  equations.
\newblock {\em Mathematische Annalen}, 385(3-4):1545--1576, 2023.

\bibitem{MR3959930}
Simone Dovetta and Lorenzo Tentarelli.
\newblock {$L^2$}-critical {NLS} on noncompact metric graphs with localized
  nonlinearity: topological and metric features.
\newblock {\em Calc. Var. Partial Differential Equations}, 58(3):Paper No. 108,
  26, 2019.

\bibitem{grillakis1987stability}
Manoussos Grillakis, Jalal Shatah, and Walter Strauss.
\newblock Stability theory of solitary waves in the presence of symmetry, {I}.
\newblock {\em Journal of functional analysis}, 74(1):160--197, 1987.

\bibitem{grillakis1990stability}
Manoussos Grillakis, Jalal Shatah, and Walter Strauss.
\newblock Stability theory of solitary waves in the presence of symmetry, {II}.
\newblock {\em Journal of functional analysis}, 94(2):308--348, 1990.

\bibitem{gustafson}
Stephen Gustafson, Stefan Le~Coz, and Tai-Peng Tsai.
\newblock Stability of periodic waves of 1{D} cubic nonlinear {S}chr\"odinger
  equations.
\newblock {\em Appl. Math. Res. Express. AMRX}, (2):431--487, 2017.

\bibitem{hofmann2021}
Matthias Hofmann.
\newblock {\em Spectral theory, clustering problems and differential equations
  on metric graphs}.
\newblock PhD thesis, Universidade de Lisboa, Faculdade de Ci\^encias, 2021.

\bibitem{hung2011symmetric}
Nguyen~Viet Hung, Marek Trippenbach, and Boris~A Malomed.
\newblock Symmetric and asymmetric solitons trapped in {H}-shaped potentials.
\newblock {\em Physical Review A?Atomic, Molecular, and Optical Physics},
  84(5):053618, 2011.

\bibitem{kairzhan2021standing}
Adilbek Kairzhan, Robert Marangell, Dmitry~E Pelinovsky, and Ke~Liang Xiao.
\newblock Standing waves on a flower graph.
\newblock {\em Journal of Differential Equations}, 271:719--763, 2021.

\bibitem{kairzhan2018nonlinear}
Adilbek Kairzhan and Dmitry~E Pelinovsky.
\newblock Nonlinear instability of half-solitons on star graphs.
\newblock {\em Journal of Differential Equations}, 264(12):7357--7383, 2018.

\bibitem{noja2015bifurcations}
Diego Noja, Dmitry Pelinovsky, and Gaukhar Shaikhova.
\newblock Bifurcations and stability of standing waves in the nonlinear
  {S}chr{\"o}dinger equation on the tadpole graph.
\newblock {\em Nonlinearity}, 28(7):2343, 2015.

\bibitem{noja2020standing}
Diego Noja and Dmitry~E Pelinovsky.
\newblock Standing waves of the quintic {NLS} equation on the tadpole graph.
\newblock {\em Calculus of Variations and Partial Differential Equations},
  59(5):173, 2020.

\bibitem{pava2024stability}
Jaime~Angulo Pava.
\newblock Stability theory for two-lobe states on the tadpole graph for the
  {NLS} equation.
\newblock {\em Nonlinearity}, 37(4):045015, 2024.

\bibitem{pelinovsky2021edge}
Dmitry~E Pelinovsky, Gregory Berkolaiko, and Jeremy~L Marzuola.
\newblock Edge-localized states on quantum graphs in the limit of large mass.
\newblock {\em Annales de l'Institut Henri Poincar{\'e} C}, 38(5):1295--1335,
  2021.

\bibitem{pierotti2021local}
Dario Pierotti, Nicola Soave, and Gianmaria Verzini.
\newblock Local minimizers in absence of ground states for the critical {NLS}
  energy on metric graphs.
\newblock {\em Proceedings of the Royal Society of Edinburgh Section A:
  Mathematics}, 151(2):705--733, 2021.

\bibitem{reed1978iv}
Michael Reed and Barry Simon.
\newblock {\em IV: Analysis of Operators}, volume~4.
\newblock Elsevier, 1978.

\bibitem{weinstein1985modulational}
Michael~I Weinstein.
\newblock Modulational stability of ground states of nonlinear
  {S}chr{\"o}dinger equations.
\newblock {\em SIAM journal on mathematical analysis}, 16(3):472--491, 1985.

\end{thebibliography}
\end{document}